\begin{document}

\dtitle{Contractive Spaces and Relatively Contractive Maps}
\dauthor[Darren Creutz]{Darren Creutz}{darren.creutz@vanderbilt.edu}{Vanderbilt University}{}

\datewritten{10 August 2015}

\dabstract{%
\large
We present an exposition of contractive spaces and of relatively contractive maps.  Contractive spaces are the natural opposite of measure-preserving actions and relatively contractive maps the natural opposite of relatively measure-preserving maps.  These concepts play a central role in the work of the author and J.~Peterson on the rigidity of actions of semisimple groups and their lattices and have also appeared in recent work of various other authors.  We present detailed definitions and explore the relationship of these phenomena with other aspects of the ergodic theory of group actions, proving along the way several new results, with an eye towards explaining how contractiveness is intimately connected with rigidity phenomena.
}

\makepreprint

\section{Introduction}

Contractive spaces were introduced by Jaworski \cite{Ja94} originally under the name strongly approximately transitive (SAT) actions and are the natural opposite of measure-preserving actions: if $G$ is a group acting on a probability space $(X,\nu)$ then the action is measure-preserving when for every measurable set $B$ and every $g \in G$ it holds that $\nu(gB) = \nu(B)$; the action is contractive when for every measurable set $B$ with $\nu(B) > 0$ it holds that $\sup_{g} \nu(gB) = 1$.

In the case when the acting group $G$ is amenable, if $G$ acts continuously (or merely Borel) on a compact metric space then a simple compactness argument (in weak*) shows the existence of probability measures preserved by the action.  In contrast, if $G$ is nonamenable then there always exists spaces on which there is no preserved measure.  However, there is a natural way to associate to $G$ a contractive space, the Poisson boundary, that is intimately connected with its action on any compact metric space.

The central idea in the (amenability half of) the rigidity theorem for actions of semisimple groups and their lattices developed in \cite{SZ94} and completed in \cite{CP14} is that if $(X,\nu)$ is any probability space on which $G$ acts measurably and $(B,\beta)$ is any Poisson boundary of $G$ then $(B \times X, \beta \times \nu) \to (X,\nu)$ is a relatively contractive map (defined below).  A crucial feature of relatively contractive maps is the uniqueness theorem (Theorem \ref{T:relcontractiveunique}) and the various factor theorems (Theorems \ref{T:contractiveIFT} and \ref{T:IFT}) that follow from it.  In this sense, contractiveness (and its relativized version) are powerful ideas in the theory of actions of nonamenable groups.

In the context of stationary actions, making use of the Poisson boundary and the fact that the product space is a relatively contractive extension, Furstenberg and Glasner developed the beginnings of a structure theory for non-measure-preserving actions of groups \cite{FG10} (though they did not have the machinery of relatively contractive maps and relied entirely on the contractive nature of the Poisson boundary in their proof).

Our purpose here is to explore the concepts of contractive spaces and relatively contractive maps and their relation to other concepts in the ergodic theory of group actions, notably to relatively measure-preserving maps and to joinings.  Many of the results presented appear (implicitly or explicitly) in \cite{CP14} and \cite{Cre13} but we include some new results.  In particular, we present an example (based on an observation of Glasner and Weiss \cite{glasnerweiss}) of a contractive stationary space that is not a Poisson boundary of the acting group, the first such example known to the author.  The existence of such a space demonstrates the usefulness of studying contractive spaces and relatively contractive maps in their own right, and not merely as a part of the study of boundary actions.

We conclude the paper with a discussion of some of the main applications of relatively contractive maps to rigidity phenomena of lattices.  In particular, we give an outline of how the notions discussed here play a key role in the proofs of results such as:
\begin{theorem*}[Theorems \ref{T:app1} and \ref{T:app2}]
Let $G$ be a semisimple group with trivial center and no compact factors with at least one factor being a connected (real) Lie group with property $(T)$.

Let $\Gamma < G$ be an irreducible lattice (meaning that the projection of $\Gamma$ is dense in every proper normal subgroup of $G$).  Then:

\emph{(i)} every measure-preserving action $\Gamma \actson (X,\nu)$ on a nonatomic probability space is essentially free; and

\emph{(ii)} if $\pi : \Gamma \to \mathcal{U}(M)$ is a representation into the unitary group of a finite factor $M$ such that $\pi(\Gamma)^{\prime\prime} = M$ then either $M$ is finite-dimensional or $\pi$ extends to an isomorphism of the group von Neumann algebra $L\Gamma \simeq M$.
\end{theorem*}

\noindent\textbf{Acknowledgments} The author wishes to thank the referee for numerous helpful suggestions and corrections and, in particular, for noticing an important issue with the definition of relatively contractible spaces and providing a solution.

\section{Group Actions on Probability Spaces}

Throughout the paper $G$ will denote a locally compact second countable topological  group and $\Gamma$ will be reserved for countable discrete groups.  Often $\Gamma < G$ will be a lattice.

\subsection[G-Spaces and G-Maps]{$G$-Spaces and $G$-Maps}

\begin{definition}
A \textbf{$G$-space} is a standard Borel probability space $(X,\nu)$ that is equipped with an action of $G$ such that $\nu$ is quasi-invariant under the action (the class of null sets is preserved by the action).  This will be written $G \actson (X,\nu)$.
\end{definition}

\begin{definition}
Let $G \actson (X,\nu)$ be a $G$-space.  The \textbf{translate of $\nu$ by $g \in G$} is the probability measure $g\nu$ defined by $g\nu(E) = \nu(g^{-1}E)$ for all measurable sets $E$.
\end{definition}

\begin{definition}
Let $(X,\nu)$ and $(Y,\eta)$ be $G$-spaces.  A measurable map $\pi : X \to Y$ such that $\pi_{*}\nu = \eta$ is a \textbf{$G$-map} when $\pi$ is $G$-equivariant: $\pi(gx) = g\pi(x)$ for all $g \in G$ and almost every $x \in X$ (here $\pi_{*}\nu$ is the \textbf{pushforward measure} defined by, for $E$ a measurable subset of $Y$, $\pi_{*}\nu(E) = \nu(\pi^{-1}(E))$).
\end{definition}

\begin{definition}
Let $\pi : (X,\nu) \to (Y,\eta)$ be a $G$-map of $G$-spaces.  The \textbf{disintegration} of $\nu$ over $\eta$ is the almost everywhere unique map $D_{\pi} : Y \to P(X)$ such that the support of $D_{\pi}(y)$ is contained in $\pi^{-1}(y)$ and that $\int_{Y} D_{\pi}(y)~d\eta(y) = \nu$.
\end{definition}

\subsection{Point Realizations}

\begin{definition}
Let $G \actson (X,\nu)$ be a $G$-space.  A \textbf{point realization} (also called a \textbf{compact model}) of the action is a compact metric space $X_{0}$ equipped with a continuous $G$-action $G \actson X_{0}$ and a Borel probability measure (with full support) $\nu_{0}$ such that the action $G \actson (X_{0},\nu_{0})$ is measurably isomorphic to $G \actson (X,\nu)$ (meaning there is a measurable isomorphism defined almost everywhere).
\end{definition}

\begin{definition}
Let $\pi : (X,\nu) \to (Y,\eta)$ be a $G$-map of $G$-spaces.  A \textbf{point realization} of $\pi$ is a continuous map $\pi_{0} : X_{0} \to Y_{0}$ of compact metric spaces such that $G \actson (X_{0},\nu_{0})$ and $G \actson (Y_{0},\eta_{0})$ are point realizations of the $G$-spaces and such that $\pi_{0} : (X_{0},\nu_{0}) \to (Y_{0},\eta_{0})$ is measurably isomorphic to $\pi : (X,\nu) \to (Y,\eta)$ (the obvious diagram of maps commutes in the category of measurable $G$-maps).
\end{definition}

The following result, due to Mackey \cite{Ma62}, states that in the case of locally compact second countable groups (the class we restrict ourselves to), measurable actions can always be realized as actions on points:
\begin{theorem}
When $G$ is locally compact second countable, there exists a point realization of every $G$-map of $G$-spaces.
\end{theorem}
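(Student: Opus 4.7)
The plan is to use Gelfand--Naimark duality applied to carefully chosen separable unital $C^{*}$-subalgebras of $L^{\infty}(X,\nu)$ and $L^{\infty}(Y,\eta)$ that are compatible under $\pi^{*}$, thereby producing compact metric models on which $G$ acts continuously and on which $\pi$ is represented by a continuous $G$-equivariant map.

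First I would realize the target. Fix a countable dense subgroup $G_{0}\le G$ (available because $G$ is second countable) together with a countable separating family of bounded Borel functions on $Y$. To force the induced $G$-action on the spectrum to be continuous rather than merely Borel, I would replace each generator $f$ by a convolution $\phi * f = \int_{G}\phi(g)\,(g\cdot f)\,dg$ against a compactly supported continuous $\phi$ on $G$; by quasi-invariance and dominated convergence the $G$-orbit of $\phi * f$ varies norm-continuously in $L^{\infty}(Y,\eta)$. Taking the $G_{0}$-orbit of all such smoothed functions and closing under the $*$-algebra operations over $\mathbb{Q}[i]$, then in norm, yields a separable unital $G$-invariant $C^{*}$-subalgebra $\mathcal{B}_{Y}\subseteq L^{\infty}(Y,\eta)$ on which $G$ acts by strongly norm-continuous $*$-automorphisms. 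Gelfand--Naimark then gives a compact metric space $Y_{0}$ with $\mathcal{B}_{Y}\cong C(Y_{0})$, a continuous $G$-action $G\curvearrowright Y_{0}$, and (via Riesz representation applied to integration against $\eta$) a $G$-quasi-invariant Borel probability measure $\eta_{0}$; the separating property of the generators forces this to be a point realization of $(Y,\eta)$.

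Next I would repeat the construction for the source, this time starting from a countable separating family of bounded Borel functions on $X$ together with the pullbacks $\pi^{*}\mathcal{B}_{Y}$, and again passing to convolved, $G_{0}$-translated, norm-closed $*$-algebra generators. The resulting $\mathcal{B}_{X}\subseteq L^{\infty}(X,\nu)$ contains $\pi^{*}\mathcal{B}_{Y}$ by construction, so the Gelfand dual map $\pi_{0}:X_{0}\to Y_{0}$ is continuous and $G$-equivariant. Integration against $\nu$ produces a Borel probability measure $\nu_{0}$ with $(\pi_{0})_{*}\nu_{0}=\eta_{0}$, and the measurable isomorphism $(X,\nu)\cong(X_{0},\nu_{0})$ restricting to $(Y,\eta)\cong(Y_{0},\eta_{0})$ follows because the underlying Borel $\sigma$-algebras on $X$ and $Y$ are generated, modulo null sets, by our chosen functions.

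The main obstacle is forcing joint continuity of the $G$-action on the spectrum: the ambient action of $G$ on $L^{\infty}$ is not norm-continuous, so a naive choice of generators produces only a Borel action by homeomorphisms on a compact metric space. The convolution-smoothing step is the device that resolves this, since $\phi * f$ has norm-continuous $G$-orbit and, as $\phi$ ranges over an approximate identity in $C_{c}(G)$, such functions are $L^{1}$-dense and hence still separate points modulo null sets. Joint continuity of $G\times X_{0}\to X_{0}$ then follows from second countability of $G$ and separability of $\mathcal{B}_{X}$ by a standard equicontinuity argument on the spectrum, and equivariance of $\pi_{0}$ is automatic from the inclusion $\pi^{*}\mathcal{B}_{Y}\subseteq\mathcal{B}_{X}$.
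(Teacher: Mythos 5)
Your argument is correct and is essentially the standard Gelfand--Naimark point-realization argument (separable $G$-invariant $C^{*}$-subalgebras of $L^{\infty}$ with orbits made norm-continuous by convolution against an approximate identity in $C_{c}(G)$, then passing to spectra), which is exactly the route taken in the detailed proofs in \cite{CP14} and \cite{Cre13} to which the paper defers. The one small point to make explicit is that $\eta_{0}$ and $\nu_{0}$ should be restricted to their (closed, $G$-invariant) supports so that the models have full support as the paper's definition requires.
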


The proof is somewhat technical and the reader is referred to \cite{CP14} or \cite{Cre13} for a detailed proof.

\subsection{Measure-Preserving Spaces and Relatively Measure-Preserving Maps}

\begin{definition}
Let $(X,\nu)$ be a $G$-space.  Then $(X,\nu)$ is \textbf{measure-preserving} when $g\nu = \nu$ for all $g \in G$.
\end{definition}

\begin{definition}
Let $G$ be a locally compact second countable group and $\pi : (X,\nu) \to (Y,\eta)$ a $G$-map of $G$-spaces.  Then $\pi$ is \textbf{relatively measure-preserving} when the disintegration of $\nu$ over $\eta$ via $\pi$, $D_{\pi} : Y \to P(X)$, is $G$-equivariant: $D_{\pi}(gy) = gD_{\pi}(y)$ for all $g \in G$ and almost every $y \in Y$.
\end{definition}

\subsection{Joinings}

Joinings between $G$-spaces are one of the main tools in the ergodic theory of group actions.  The reader is referred to \cite{glasner} for a detailed introduction to the theory of joinings and how they can be used to define a notion of orthogonality (or disjointness) of actions of groups on probability spaces.  In the context of measure-preserving actions, joinings have proven their usefulness in a variety of ways, e.g.~\cite{furstjoinings}, \cite{delJuncoRudolph}, \cite{glasner}, \cite{delarue}.

\begin{definition}
Let $(X,\nu)$ and $(Y,\eta)$ be $G$-spaces.  Let $\alpha \in P(X \times Y)$ such that $(\mathrm{pr}_{X})_{*}\alpha = \nu$, $(\mathrm{pr}_{Y})_{*}\alpha = \eta$ and $\alpha$ is quasi-invariant under the diagonal $G$-action.  The space $(X \times Y, \alpha)$ with the diagonal $G$-action is called a \textbf{joining} of $(X,\nu)$ and $(Y,\eta)$.
\end{definition}

\begin{definition}
A joining $\alpha$ of $G$-spaces is \textbf{$G$-invariant} when $\alpha$ is $G$-measure-preserving under the diagonal action.
\end{definition}

\begin{definition}
Let $(X,\nu)$ and $(Y,\eta)$ be $G$-spaces.  The space $(X\times Y, \nu \times \eta)$ with the diagonal $G$-action is the \textbf{independent joining} of $(X,\nu)$ and $(Y,\eta)$.
\end{definition}

\begin{proposition}
Let $\alpha \in P(X\times Y)$ be a joining of the $G$-spaces $(X,\nu)$ and $(Y,\eta)$.  Consider the projection $p : X \times Y \to Y$.  The disintegration of $\alpha$ over $\eta$ via $p$ is of the form $D_{p}(y) = \alpha_{y} \times \delta_{y}$ for some $\alpha_{y} \in P(X)$ almost surely.
\end{proposition}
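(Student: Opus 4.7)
The plan is to exploit the defining property that the support of $D_p(y)$ is contained in $p^{-1}(y) = X \times \{y\}$ for $\eta$-almost every $y$. In the standard Borel setting this is the statement that $D_p(y)(X \times \{y\}) = 1$, and my claim will be the elementary measure-theoretic fact that any probability measure on $X \times Y$ concentrated on a single slice $X \times \{y\}$ is automatically a product with $\delta_y$ on the second factor. The identity $\alpha_y$ will then be forced to be the pushforward onto $X$.

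First, I would take as candidate $\alpha_y := (\mathrm{pr}_X)_{*}(D_p(y)) \in P(X)$. Measurability of the map $y \mapsto \alpha_y$ follows from the measurability of the disintegration $D_p$ and the Borel measurability of the pushforward operation on spaces of probability measures.

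Second, I would verify the factorization $D_p(y) = \alpha_y \times \delta_y$ on measurable rectangles. For Borel $A \subseteq X$ and $B \subseteq Y$, using $D_p(y)(X \times \{y\}) = 1$ one computes
\begin{equation*}
D_p(y)(A \times B) = D_p(y)\bigl((A \times B) \cap (X \times \{y\})\bigr) = \mathbf{1}_B(y) \cdot D_p(y)(A \times Y) = \delta_y(B) \cdot \alpha_y(A),
\end{equation*}
which is $(\alpha_y \times \delta_y)(A \times B)$. Since measurable rectangles generate the product $\sigma$-algebra on the standard Borel space $X \times Y$, a standard monotone class argument (equivalently, uniqueness of a probability measure given its values on a generating $\pi$-system) upgrades this to equality of measures.

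The only point to track carefully will be the almost-everywhere quantifier: the support containment and the derived product identity each hold off an $\eta$-null set, but this is already the precision in which the disintegration is defined, so I expect no genuine obstacle. The \emph{marginal} properties of $\alpha$ are not needed for this proposition; the support condition alone forces the second-factor marginal of $D_p(y)$ to be $\delta_y$.
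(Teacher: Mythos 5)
Your proof is correct, and it is the standard (essentially the only) argument for this fact: the paper states this proposition without proof, and what you have written is precisely the routine verification it leaves to the reader. The key observation --- that the disintegration measure $D_{p}(y)$ is concentrated on the fiber $p^{-1}(y) = X \times \{y\}$, which forces its second marginal to be $\delta_{y}$ and hence forces the product form with $\alpha_{y} = (\mathrm{pr}_{X})_{*}D_{p}(y)$ --- together with the check on rectangles and the $\pi$-system uniqueness argument, is exactly what is needed; the joining hypotheses on the marginals of $\alpha$ play no role, as you correctly note.
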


\begin{definition}[\cite{glasner} Definition 6.9]
Let $(X,\nu)$, $(Y,\eta)$ and $(Z,\zeta)$ be $G$-spaces and let $\alpha$ be a joining of $(X,\nu)$ and $(Y,\eta)$ and $\beta$ be a joining of $(Y,\nu)$ and $(Z,\zeta)$.  Let $\alpha_{y} \in P(X)$ and $\beta_{y} \in P(Z)$ be the projections of the disintegrations of $\alpha$ and $\beta$ over $\eta$.  The measure $\rho \in P(X \times Z)$ by
\[
\rho = \int_{Y} \alpha_{y} \times \beta_{y}~d\eta(y)
\]
is the \textbf{composition} of $\alpha$ and $\beta$.
\end{definition}

\begin{proposition}[\cite{glasner} Proposition 6.10]\label{P:composejoining}
The composition of two joinings is a joining.  If two joinings are $G$-invariant then so is their composition.
\end{proposition}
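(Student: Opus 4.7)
The plan is to verify two things about $\rho = \int_{Y} \alpha_{y} \times \beta_{y}\,d\eta(y)$: first, that its marginals on $X$ and $Z$ are $\nu$ and $\zeta$; and second, that $\rho$ is quasi-invariant under the diagonal $G$-action. The $G$-invariant case will follow by making the same arguments with equalities replacing equivalences.

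The marginal computation is immediate. Since the preceding proposition gives $\alpha = \int_Y \alpha_y \times \delta_y\,d\eta(y)$, projecting to $X$ yields $\nu = \int_Y \alpha_y\,d\eta(y)$, which is exactly $(\mathrm{pr}_X)_*\rho$. The dual computation using $\beta = \int_Y \delta_y \times \beta_y\,d\eta(y)$ gives $(\mathrm{pr}_Z)_*\rho = \zeta$.

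For quasi-invariance, I fix $g \in G$ and compute against a bounded measurable test function $f$ on $X \times Z$:
\[
\int f\,d(g\rho) = \int_Y (g\alpha_y \times g\beta_y)(f)\,d\eta(y),
\]
and then change variable $y \mapsto g^{-1}y$ to reparametrize by $g\eta$ and hence, using $g\eta \sim \eta$, by $\eta$ itself with Radon--Nikodym derivative $\frac{dg\eta}{d\eta}$. The key step is then to compare, for $\eta$-a.e.\ $y$, the measures $g\alpha_{g^{-1}y} \times g\beta_{g^{-1}y}$ and $\alpha_y \times \beta_y$ on $X \times Z$. Quasi-invariance of $\alpha$ on $X \times Y$ means $g\alpha \sim \alpha$; disintegrating both sides over $\eta$ (after the change of variable sending $g\eta$ to $\eta$) and invoking a.e.\ uniqueness of disintegration, one obtains $g\alpha_{g^{-1}y} \sim \alpha_y$ on $X$ for $\eta$-a.e.\ $y$. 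The same argument applied to $\beta$ yields $g\beta_{g^{-1}y} \sim \beta_y$ on $Z$ for $\eta$-a.e.\ $y$. A Fubini argument then shows the product measures are equivalent for $\eta$-a.e.\ $y$, and since $\frac{dg\eta}{d\eta} > 0$ $\eta$-a.e., the integrated measures $g\rho$ and $\rho$ are mutually absolutely continuous. Thus $\rho$ is a joining.

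For the $G$-invariant case, if $\alpha$ and $\beta$ are $G$-invariant then taking marginals shows $\eta$ is $G$-invariant, so $\frac{dg\eta}{d\eta} \equiv 1$. Moreover $g\alpha = \alpha$, and uniqueness of disintegration over $\eta$ now gives the equality $g\alpha_{g^{-1}y} = \alpha_y$ for $\eta$-a.e.\ $y$, rather than just equivalence; likewise $g\beta_{g^{-1}y} = \beta_y$. Plugging these into the computation of $g\rho$ in the previous paragraph yields $g\rho = \rho$.

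The one subtle point I expect to be the main obstacle is propagating quasi-invariance of the joint measure $\alpha$ on $X \times Y$ down to equivalence of the conditional measures $\alpha_y$ and $g\alpha_{g^{-1}y}$ for $\eta$-a.e.\ $y$. This is precisely where one has to combine a.e.\ uniqueness of disintegration with a Fubini argument (applied to the Radon--Nikodym derivative $\frac{dg\alpha}{d\alpha}$, which is strictly positive $\alpha$-a.e.) to ensure that positivity of the density on $X \times Y$ descends to positivity of the conditional densities on $X$ for $\eta$-a.e.\ fiber; all other ingredients are routine changes of variable and applications of Fubini's theorem.
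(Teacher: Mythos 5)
Your proof is correct. The paper gives no argument for this proposition---it is quoted directly from Glasner's book---so there is nothing to diverge from, and your argument (check the marginals via the preceding proposition, then propagate quasi-invariance of $\alpha$ and $\beta$ to the fiber measures and integrate) is the standard one. The single step you flag as subtle, namely that $g\alpha \sim \alpha$ descends to $g\alpha_{g^{-1}y} \sim \alpha_{y}$ for $\eta$-almost every $y$, is precisely the content of the paper's own Lemma \ref{L:abscontmeas} applied to the equivalent measures $g\alpha$ and $\alpha$ disintegrated over the projection to $Y$ (noting that the disintegration of $g\alpha$ over $g\eta$ is $y \mapsto g\alpha_{g^{-1}y} \times \delta_{y}$ by uniqueness), so you may cite that lemma in place of the inline Radon--Nikodym/Fubini derivation; the invariant case then follows, as you say, by replacing equivalences with equalities throughout.
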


\subsection{Relative Joinings}

\begin{definition}
Let $(X,\nu)$ and $(Y,\eta)$ be $G$-spaces with a common $G$-quotient $(Z,\zeta)$, that is a diagram of $G$-maps and $G$-spaces as follows:
\begin{diagram}
		&			&(X,\nu)\\
		&			&\dTo^{\pi} \\
(Y,\eta)	&\rTo^{\varphi}	&(Z,\zeta)
\end{diagram}
Treat $X \times Y$ as a $G$-space with the diagonal action.
A $G$-quasi-invariant Borel probability measure $\rho \in P(X \times Y)$ is a \textbf{relative joining} of $(X,\nu)$ and $(Y,\eta)$ over $(Z,\zeta)$ when the following diagram of $G$-maps commutes:
\begin{diagram}
(X \times Y,\rho)		&\rTo^{p_{X}}	&(X,\nu)\\
\dTo^{p_{Y}}	&			&\dTo^{\pi} \\
(Y,\eta)		&\rTo^{\varphi}	&(Z,\zeta)
\end{diagram}
where $p_{X}$ and $p_{Y}$ are the natural projections from $X \times Y$ to $X$ and $Y$, respectively.
\end{definition}

In general, the product $\nu \times \eta$ is not a relative joining of $(X,\nu)$ and $(Y,\eta)$ over $(Z,\zeta)$ unless $(Z,\zeta)$ is trivial since we require that $\pi \circ p_{X} = \varphi \circ p_{Y}$ almost everywhere.  However, there is a notion of independent joining in the relative case:
\begin{definition}
Let $(X,\nu)$ and $(Y,\eta)$ be $G$-spaces with common $G$-quotient $(Z,\zeta)$.  Let $\pi : (X,\nu) \to (Z,\zeta)$ and $\varphi : (Y,
\eta) \to (Z,\zeta)$ be the quotient maps.  The probability measure $\rho \in P(X \times Y)$ given by
\[
\rho = \int_{Z} D_{\pi}(z) \times D_{\varphi}(z)~d\zeta(z)
\]
is the \textbf{independent relative joining} of $(X,\nu)$ and $(Y,\eta)$ over $(Z,\zeta)$.
\end{definition}

Of course, the independent relative joining is a relative joining.  We also note that the independent joining $\nu \times \eta$ is the independent relative joining over the trivial system.

\begin{proposition}\label{P:irjfactor}
Let $\pi : (X,\nu) \to (Y,\eta)$ be a $G$-map of $G$-spaces.  Then the independent relative joining of $(X,\nu)$ and $(Y,\eta)$ over $(Y,\eta)$ is $G$-isomorphic to $(X,\nu)$.
\end{proposition}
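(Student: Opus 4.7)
The plan is to exhibit the $G$-isomorphism explicitly as the graph map $\Phi : X \to X \times Y$, $\Phi(x) = (x,\pi(x))$, with inverse given by the projection $p_{X}$ restricted to the (almost sure) support of $\rho$. The measure $\rho$ in question is $\int_{Y} D_{\pi}(y) \times D_{\mathrm{id}}(y)\, d\eta(y)$, and the key observation is that since the quotient map $Y \to Y$ is the identity, its disintegration is $D_{\mathrm{id}}(y) = \delta_{y}$, so $\rho = \int_{Y} D_{\pi}(y) \times \delta_{y}\, d\eta(y)$, which is concentrated on the graph $\{(x,y) : \pi(x) = y\}$.

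First I would verify that $\Phi$ is $G$-equivariant: this is immediate from $G$-equivariance of $\pi$, since $\Phi(gx) = (gx,\pi(gx)) = (gx,g\pi(x)) = g\Phi(x)$. Next I would check that $\Phi_{*}\nu = \rho$: disintegrating $\nu$ over $\eta$ as $\nu = \int_{Y} D_{\pi}(y)\, d\eta(y)$ (so that each $D_{\pi}(y)$ is supported in $\pi^{-1}(y)$), one computes
\[
\Phi_{*}\nu = \int_{Y} \Phi_{*} D_{\pi}(y)\, d\eta(y) = \int_{Y} D_{\pi}(y) \times \delta_{y}\, d\eta(y) = \rho,
\]
using that $\Phi$ sends any $x$ with $\pi(x) = y$ to $(x,y)$. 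Finally, because $\rho$ is supported on the graph of $\pi$, the projection $p_{X} : X\times Y \to X$ satisfies $p_{X}\circ \Phi = \mathrm{id}_{X}$ on $X$ and $\Phi \circ p_{X} = \mathrm{id}_{X\times Y}$ on a $\rho$-conull set, so $\Phi$ is a measurable isomorphism with measurable $G$-equivariant inverse $p_{X}$.

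The only technical point, and the one I would be most careful about, is the ``almost everywhere'' handling in the last step: one must justify that the set $\{(x,y) : y = \pi(x)\}$ has full $\rho$-measure, which follows from the defining property of $D_{\pi}$ (that $D_{\pi}(y)$ is supported in $\pi^{-1}(y)$ for $\eta$-a.e.\ $y$) combined with Fubini/the integral representation of $\rho$. Once this is in hand, the measurable isomorphism $\Phi$ is also a $G$-map, and by the same token so is its inverse $p_{X}$, yielding the claimed $G$-isomorphism $(X\times Y,\rho) \cong (X,\nu)$.
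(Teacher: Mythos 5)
Your proposal is correct and is essentially the paper's argument viewed from the other side: the paper computes the disintegration of the joining over $\nu$ via $p_{X}$ and finds it to be $\delta_{x}\times\delta_{\pi(x)}=\delta_{\Phi(x)}$, which is exactly the statement that your graph map $\Phi$ inverts $p_{X}$ almost everywhere. Both routes hinge on the same observation that the independent relative joining over $(Y,\eta)$ is concentrated on the graph of $\pi$, so there is no substantive difference.
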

\begin{proof}
The independent relative joining is $(X\times Y,\alpha)$ where
\[
\alpha = \int_{Y} D_{\pi}(y) \times \delta_{y}~d\eta(y).
\]
Let $p : X \times Y \to X$ be the projection to $X$.  Let $\alpha_{x} \in P(X \times Y)$ by $\alpha_{x} = \delta_{x} \times \delta_{\pi(x)}$.  Then
\begin{align*}
\int_{X} \alpha_{x}~d\nu(x) &= \int_{Y} \int_{X} \delta_{x} \times \delta_{\pi(x)}~dD_{\pi}(y)(x)~d\eta(y) \\
&= \int_{Y} \int_{X} \delta_{x} \times \delta_{y}~dD_{\pi}(y)(x)~d\eta(y)\\
&= \int_{Y} D_{\pi}(y) \times \delta_{y}~d\eta(y) = \alpha
\end{align*}
and $\alpha_{x}$ is supported on $p^{-1}(x) = \{ x \} \times Y$.  Therefore $D_{p}(x) = \alpha_{x}$ by uniqueness of disintegration.
Since $\alpha_{x}$ is a point mass, then $p$ is an isomorphism so $(X \times Y, \alpha)$ is isomorphic to $(X,\nu)$.
\end{proof}

\subsection{Lattices and Induced Actions}\label{sub:inducedaction}

A countable subgroup $\Gamma < G$ in a locally compact second countable group is a \textbf{lattice} when it is discrete in the $G$-topology and there exists a finite-Haar-measure fundamental domain for $G / \Gamma$.  The main examples of lattices are the arithmetic points of algebraic groups, for example $\mathbb{SL}_{n}(\mathbb{Z}) < \mathbb{SL}_{n}(\mathbb{R})$.

We recall now the construction of the induced action from a lattice to the ambient group, see, e.g., \cite{Zi84}.  
Let $\Gamma < G$ be a lattice in a locally compact second countable group and let $(X,\nu)$ be a $\Gamma$-space.
Take a fundamental domain $F$ for $G / \Gamma$ such that $e \in F$.  Let $m \in P(F)$ be the Haar measure of $G$ restricted to $F$ and normalized to be a probability measure on $F$.  Define the cocycle $\alpha : G \times F \to \Gamma$ by
$\alpha(g,f) = \gamma$ such that $gf\gamma \in F$
and observe that such a $\gamma$ is unique so this is well-defined.  Note that
$\alpha(gh,f) = \alpha(h,f)\alpha(g,hf\alpha(h,f))$
meaning $\alpha$ is indeed a cocycle.  We also remark that $\alpha(f,e) = e$ for $f \in F$ and that $\alpha(\gamma,e) = \gamma^{-1}$ for $\gamma \in \Gamma$.
Consider now the action of $G$ on $F \times X$ given by
\[
g \cdot (f,x) = (gf\alpha(g,f), \alpha(g,f)^{-1}x)
\]
and observe that the measure $m \times \nu$ is quasi-invariant under this action.  So $(F \times X, m \times \nu)$ is a $G$-space.

Also consider the $\Gamma$-action on $(G \times X, \mathrm{Haar} \times \nu)$ given by
\[
\gamma \cdot (g,x) = (g\gamma^{-1}, \gamma x)
\]
and observe that this is quasi-invariant as well.  Since the $\Gamma$-action on $G / \Gamma$ is proper the space of $\Gamma$-orbits of $G \times X$ under that action is well-defined and we denote it by $G \times_{\Gamma} X$ and write elements as $[g,x]$.  Define a $G$-action on $G \times_{\Gamma} X$ by $h \cdot [g,x] = [hg,x]$.

Define the map $\tau : F \times X \to G \times_{\Gamma} X$ by
$\tau(f,x) = [f,x]$
and the map $\rho : G \times_{\Gamma} X \to F \times X$ by
$\rho([g,x]) = (g\alpha(g,e),\alpha(g,e)^{-1}x)$.
Observe that $\rho$ is well-defined since $\alpha(g\gamma,e) = \gamma^{-1}\alpha(g,e)$.

Clearly, $\tau(\rho([g,x])) = [g,x]$
and
$\rho(\tau(f,x)) = (f,x)$
so these maps invert one another.  Moreover,
\begin{align*}
\tau(g\cdot(f,x))
= [gf\alpha(g,f),\alpha(g,f)^{-1}x] 
= [gf,x] = g \cdot [f,x] = g \cdot \tau(f,x)
\end{align*}
and similarly, $\rho(h\cdot [g,x]) = h \cdot \rho([g,x])$
so $\tau$ and $\rho$ are inverse $G$-isomorphisms of $(F \times X, m \times \nu)$ and $(G\times_{\Gamma} X, \alpha)$ where $\alpha = \tau_{*}(m \times \nu)$.

These isomorphisms show that the construction defined is independent of the fundamental domain chosen and we define the \textbf{induced action to $G$ of $\Gamma \actson (X,\nu)$} to be the $G$-space $(G \times_{\Gamma} X, \alpha)$.

More generally, one can induce a $\Gamma$-map of $\Gamma$-spaces.  Let $\pi : (X,\nu) \to (Y,\eta)$ be a $\Gamma$-map of $\Gamma$-spaces.
Fix a fundamental domain $F$ for $G / \Gamma$ and $m \in P(F)$ as above.  Define the map $\Phi : (F \times X, m \times \nu) \to (F \times Y, m \times \eta)$ by
$\Phi(f,x) = (f,\pi(x))$.  Then
\begin{align*}
\Phi(g \cdot (f,x))
= (gf\alpha(g,f),\pi(\alpha(g,f)^{-1}x)) 
= (gf\alpha(g,f),\alpha(g,f)^{-1}\pi(x)) 
= g \cdot \Phi(f,x)
\end{align*}
so $\Phi$ is a $G$-map of $G$-spaces.  Let $\Pi : (G \times_{\Gamma} X, \alpha) \to (G \times_{\Gamma} Y, \beta)$ be the image of $\Phi$ over the canonical isomorphisms defined above for the induced actions.
The $G$-map $\Pi$ between the induced $G$-spaces is referred to as the \textbf{induced $G$-map from the $\Gamma$-map $\pi$}.

\section{Contractive Spaces}

The study of contractive spaces was initiated by Jaworski \cite{Ja94}, \cite{Ja94} as part of his proof of the Choquet-Deny Theorem  for nilpotent groups.  The key observation in his work is that Poisson boundaries enjoy a property he termed strong approximate transitivity (SAT), a name based on the fact that it can be viewed as stronger version of the approximate transitivity property of Connes and Woods.

Later work indicated that contractive is a better term for this property as it is both more descriptive and removes the somewhat misleading appearance of a connection to the AT property (the reader is referred to \cite{CS14}, \cite{CP14} and \cite{Cre13}).  Contractiveness is the natural opposite of measure-preserving and is orthogonal to measure-preserving in a variety of ways (presented in the sequel).

\subsection{The Definition of Contractive Actions}

\begin{definition}[Jaworski \cite{Ja94}]
An action $G \actson (X,\nu)$ is a \textbf{contractive} when for every measurable set $B$ with $\nu(B) > 0$ it holds that $\sup_{g} \nu(gB) = 1$.
\end{definition}

This is easily seen to be equivalent to the statement that given any positive measure set $B$ there exists a sequence $\{ g_{n} \}$ of group elements such that $\nu(g_{n}B) \to 1$ which can be thought of saying that under $\{ g_{n}^{-1} \}$ the whole space $A$ contracts to the set $B$.

\begin{proposition}
If $G \actson (X,\nu)$ is both contractive and measure-preserving, it is the trivial (one-point) system.
\end{proposition}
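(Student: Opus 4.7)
The plan is to combine the two hypotheses directly and deduce that the measure $\nu$ assigns measure $1$ to every positive-measure set, which forces the space to be a single atom.

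First I would fix an arbitrary measurable set $B \subseteq X$ with $\nu(B) > 0$. Since the action is measure-preserving, $\nu(gB) = \nu(B)$ for every $g \in G$, so $\sup_{g} \nu(gB) = \nu(B)$. On the other hand, contractiveness applied to $B$ gives $\sup_{g} \nu(gB) = 1$. Comparing these two equalities immediately yields $\nu(B) = 1$.

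From here I would argue that every measurable set has measure either $0$ or $1$, and in fact that $\nu$ must be supported on a single point. Suppose, towards a contradiction, that $(X,\nu)$ is not essentially a point. Since $(X,\nu)$ is a standard Borel probability space, we can find a measurable set $B$ with $0 < \nu(B) < 1$ (for instance, split $X$ using a countable separating family of Borel sets, noting that if no such split existed then $\nu$ would be concentrated on a single atom). But we just showed that any set of positive measure has full measure, a contradiction. Hence $(X,\nu)$ is the one-point system.

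The main (and only) obstacle is the standard measure-theoretic step of upgrading ``every positive-measure set has full measure'' to ``the space is a single point,'' which is routine for standard Borel probability spaces via the existence of a countable separating family of Borel sets. Everything else is a one-line comparison of the two suprema.
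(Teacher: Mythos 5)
Your proof is correct and follows essentially the same argument as the paper: measure-preservation forces $\sup_g \nu(gB) = \nu(B)$ while contractiveness forces this supremum to be $1$, so every positive-measure set is conull and the space is measurably a point. The extra detail you give about upgrading this to a one-point space via a countable separating family is fine but is treated as immediate in the paper.
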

\begin{proof}
Let $B$ be any positive measure set.  For any $\epsilon > 0$ there exists $g \in G$ such that $\nu(gB) > 1 - \epsilon$ since the action is contractive.  But $\nu(gB) = \nu(B)$ since the action is measure-preserving.  Therefore $\nu(B) > 1 - \epsilon$ for all $\epsilon$.  We conclude that $\nu(B) = 1$ for any positive measure set meaning that $(X,\nu)$ is (measurably) a single point.
\end{proof}

\begin{proposition}[Jaworski \cite{Ja94}]
If $G \actson (X,\nu)$ is contractive then it is ergodic.
\end{proposition}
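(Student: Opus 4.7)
The plan is to argue by contradiction. Suppose $G \actson (X,\nu)$ is not ergodic, so there exists a $G$-invariant measurable set $B \subseteq X$ with $0 < \nu(B) < 1$. Recall that in the quasi-invariant setting a measurable set $B$ is called $G$-invariant when $\nu(gB \triangle B) = 0$ for every $g \in G$, which immediately yields $\nu(gB) = \nu(B)$ for every $g$. Thus $\sup_{g} \nu(gB) = \nu(B) < 1$, directly contradicting the defining property of contractiveness applied to the positive-measure set $B$. Hence every $G$-invariant set has measure $0$ or $1$, i.e.\ the action is ergodic.

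There is essentially no obstacle: the whole argument is a one-line contradiction once the definitions are unwound. The only point worth stating carefully is that $G$-invariance of a measurable set in a quasi-invariant action means equality modulo null sets for every $g$, which is exactly strong enough to give $\nu(gB) = \nu(B)$ on the nose. One could equivalently phrase the argument as follows: the restriction of the action to the $\sigma$-algebra of $G$-invariant sets is measure-preserving, so by the previous proposition (contractive $+$ measure-preserving implies trivial) this $\sigma$-algebra must be trivial, which is ergodicity. I would present the direct two-line contradiction rather than the factor-through-the-invariant-$\sigma$-algebra version, since it requires no extra machinery.
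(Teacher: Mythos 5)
Your proof is correct and is essentially the same argument as the paper's: both reduce to the observation that $G$-invariance of $B$ forces $\nu(gB) = \nu(B)$ for all $g$, which is incompatible with $\sup_{g}\nu(gB)=1$ unless $\nu(B)\in\{0,1\}$. The only cosmetic difference is that you run it as a contradiction while the paper argues directly that any invariant set of positive measure has full measure; your explicit remark that invariance mod null sets suffices is a harmless refinement.
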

\begin{proof}
Let $B$ be a positive measure $G$-invariant set.  Since the action is contractive, for any $\epsilon > 0$ there exists $g \in G$ such that $\nu(gB) > 1 - \epsilon$.  As $B$ is $G$-invariant, $gB = B$ and so $\nu(B) > 1- \epsilon$.  We then conclude that $\nu(B) = 1$ meaning the action is ergodic.
\end{proof}

\subsection{Properties of Contractive Actions}

We now state various equivalent characterizations of contractiveness and properties of such spaces.  These facts are all special cases of results presented in the following section on relatively contractive maps and so we omit proofs here in favor of presenting the more general proofs in the sequel.

\begin{theorem}[Jaworski \cite{Ja94}]
$G \actson (X,\nu)$ is contractive if and only if the map $L^{\infty}(X,\nu) \to L^{\infty}(G,\Haar)$ given by $f \mapsto \widehat{f}$ where
\[
\widehat{f}(g) = g\nu(f) = \int f(gx)~d\nu(x)
\]
is an isometry.
\end{theorem}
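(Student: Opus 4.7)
The map $T : f \mapsto \widehat{f}$ is always a linear contraction, since
\[
|\widehat{f}(g)| = \left|\int f\, d(g\nu)\right| \le \|f\|_{\infty} \cdot g\nu(X) = \|f\|_{\infty},
\]
so the content of the statement is that $\|\widehat{f}\|_{\infty} \ge \|f\|_{\infty}$ for all $f$ if and only if the action is contractive. The plan is to prove each direction by testing on indicator functions and then extending to bounded $f$ by a level-set argument. A preliminary observation I would record is that, by the substitution $g \leftrightarrow g^{-1}$, contractivity of $(X,\nu)$ is equivalent to $\sup_{g} g\nu(B) = 1$ for every $B$ with $\nu(B) > 0$, since $g\nu(B) = \nu(g^{-1}B)$.

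For the easier direction (isometry implies contractive), I would apply $T$ to $f = \mathbf{1}_{B}$ for a set $B$ with $\nu(B) > 0$. Since $\widehat{\mathbf{1}_{B}}(g) = g\nu(\mathbf{1}_{B}) = g\nu(B)$ and $\|\mathbf{1}_{B}\|_{\infty} = 1$, the isometry condition says $\sup_{g} g\nu(B) = 1$, which by the remark above is contractivity.

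For the forward direction, assume the action is contractive and fix $f \in L^{\infty}(X,\nu)$ with $M := \|f\|_{\infty}$. Reducing to real-valued $f$ in the standard way (considering real and imaginary parts, or replacing $f$ by $\bar{c}f$ for an appropriate unit scalar), fix $\epsilon > 0$ and note that at least one of
\[
E_{+} = \{ x : f(x) > M - \epsilon \}, \qquad E_{-} = \{ x : f(x) < -M + \epsilon \}
\]
has positive $\nu$-measure. In the case $\nu(E_{+}) > 0$, contractivity (in the form $\sup_{g} g\nu(E_{+}) = 1$) produces a sequence $g_{n}$ with $g_{n}\nu(E_{+}) \to 1$, and then
\[
\widehat{f}(g_{n}) = \int_{E_{+}} f\, d(g_{n}\nu) + \int_{E_{+}^{c}} f\, d(g_{n}\nu) \ge (M - \epsilon)\, g_{n}\nu(E_{+}) - M\bigl(1 - g_{n}\nu(E_{+})\bigr) \longrightarrow M - \epsilon,
\]
so $\|\widehat{f}\|_{\infty} \ge M - \epsilon$; the case $\nu(E_{-}) > 0$ is symmetric and gives $\widehat{f}(g_{n}) \to -(M - \epsilon)$. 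Letting $\epsilon \downarrow 0$ yields $\|\widehat{f}\|_{\infty} \ge M$, and combined with the contractivity estimate above this is an equality.

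The only technical friction I anticipate is the distinction between $\sup$ and $\operatorname*{ess\,sup}$ when interpreting $\|\widehat{f}\|_{\infty}$ as the norm in $L^{\infty}(G,\Haar)$; this is handled by the fact that $g \mapsto g\nu$ is continuous in total variation for quasi-invariant actions of LCSC groups (so $g \mapsto \widehat{f}(g)$ is continuous and the two suprema coincide), or alternatively by observing that the values $\widehat{f}(g_{n})$ produced above persist on neighborhoods of $g_{n}$ of positive Haar measure. Beyond this, everything reduces to testing on indicators and the elementary two-piece estimate for $\int f\, d(g_{n}\nu)$.
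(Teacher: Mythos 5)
Your proof is correct and follows essentially the same route as the paper, which defers this statement to its relative generalization (Proposition \ref{P:contractiveexte}): one direction by testing on indicator functions, the other by passing to the level set $\{f > \|f\|_{\infty} - \epsilon\}$ (after normalizing the sign/phase of $f$), contracting mass onto it, and applying the two-piece estimate. Your explicit handling of the $\sup$ versus $\operatorname*{ess\,sup}$ issue in $L^{\infty}(G,\Haar)$ via continuity of $g \mapsto g\nu$ is a point the paper's argument glosses over, but it does not change the approach.
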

Note that for measure-preserving systems, the map $f \mapsto \widehat{f}$ has image precisely equal to the constants and that this is an equivalent characterization of measure-preserving.

\begin{theorem}[Creutz-Shalom \cite{CS14}]
Contractiveness is a property of the measure class: if $G \actson (X,\nu)$ is contractive and $\eta$ is a probability measure on $X$ in the same measure class as $\nu$ then $G \actson (X,\eta)$ is contractive.  In fact, the same sequence of elements in $G$ contracts both measures: if $\nu(g_{n}B) \to 1$ then $\eta(g_{n}B) \to 1$.
\end{theorem}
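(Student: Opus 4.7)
The plan is to reduce the equivalence-of-measures claim to the basic fact that $L^{1}$-integrals are absolutely continuous with respect to the underlying measure. Since $\nu$ and $\eta$ lie in the same measure class, the Radon--Nikodym derivative $\phi = d\eta/d\nu$ exists, is nonnegative, lies in $L^{1}(\nu)$, and satisfies $\int \phi\, d\nu = 1$. The main claim to establish is: if $B$ is a measurable set and $\{g_n\} \subset G$ satisfies $\nu(g_n B) \to 1$, then $\eta(g_n B) \to 1$. Once this is in hand, the full theorem follows because $\eta \sim \nu$ implies $\eta(B) > 0 \iff \nu(B) > 0$, so any set $B$ with $\eta(B)>0$ is contracted to measure $1$ by the contractiveness of $(X,\nu)$, and the derived convergence then gives the same for $\eta$.

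To prove the claim, I would write
\[
\eta(g_n B) \;=\; \int_{g_n B} \phi\, d\nu \;=\; 1 - \int_{X \setminus g_n B} \phi\, d\nu,
\]
and then invoke the standard absolute continuity of the $\nu$-integral of $\phi \in L^1(\nu)$: for every $\varepsilon > 0$ there exists $\delta > 0$ such that $\nu(A) < \delta$ implies $\int_A \phi\, d\nu < \varepsilon$. Setting $A_n = X \setminus g_n B$ and using $\nu(A_n) \to 0$, we conclude $\int_{A_n} \phi\, d\nu \to 0$, hence $\eta(g_n B) \to 1$. This can be proved directly by truncating $\phi$: writing $\phi = \phi \wedge M + (\phi - M)^{+}$, the first piece contributes at most $M\nu(A_n) \to 0$, while the tail can be made arbitrarily small in $L^1(\nu)$ by choosing $M$ large, uniformly in $n$.

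There is no real obstacle here; the content of the theorem is entirely that one can separate the roles of measure-class (used to form $\phi$) and metric control (provided by absolute continuity of the integral). The only point to verify carefully is that $\phi$ does not depend on the group action at all, so the argument passes through for the \emph{same} contracting sequence $\{g_n\}$ and is symmetric in $\nu$ and $\eta$: reversing the roles using $d\nu/d\eta$ shows the same sequence contracts $\nu$ whenever it contracts $\eta$. With the claim established, the contractiveness of $(X,\eta)$ is then immediate from applying the claim to an arbitrary positive $\eta$-measure set $B$ and any sequence $\{g_n\}$ that contracts $\nu$ onto $B$.
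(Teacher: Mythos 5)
Your proof is correct and complete. The paper itself does not supply a proof of this statement: it attributes the result to Creutz--Shalom \cite{CS14} and remarks that the facts in that subsection are ``special cases of results presented in the following section on relatively contractive maps,'' so the intended route is through the general machinery (e.g.\ the fiberwise absolute-continuity lemma for disintegrations, specialized to the extension of a point). Your argument is a direct, self-contained alternative: writing $\phi = d\eta/d\nu \in L^{1}(\nu)$ and using uniform integrability of a single $L^{1}$ function (via the truncation $\phi = \phi \wedge M + (\phi - M)^{+}$) to get $\int_{X \setminus g_{n}B} \phi\, d\nu \to 0$ whenever $\nu(X \setminus g_{n}B) \to 0$ proves precisely the ``same sequence'' refinement, and the reduction of contractiveness of $(X,\eta)$ to that claim via $\eta(B) > 0 \iff \nu(B) > 0$ is exactly right. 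What your approach buys is elementarity and transparency -- it isolates the fact that the statement has nothing to do with the group action beyond quasi-invariance; what the paper's deferral buys is uniformity, since the relativized version (fiberwise over a quotient) requires the disintegration lemma anyway and subsumes this case.
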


Viewing contractive actions at the level of point realizations plays a key role in the proof of the uniqueness theorem for relatively contractive maps.  To this end, we present a definition and result due to Furstenberg and Glasner on point realizations of contractive actions:
\begin{definition}[Furstenberg-Glasner \cite{FG10}]
Let $G \actson X$ be a continuous action of a group $G$ on a compact metric space $X$ and let $\nu$ be a Borel probability measure on $X$ with full support.  The action $G \actson (X,\nu)$ is \textbf{contractible} when for every point $x \in X$ there exists a sequence $\{ g_{n} \}$ in $G$ such that $g_{n}\nu \to \delta_{x}$ in weak* (here $\delta_{x}$ is the point mass at $x$).
\end{definition}

\begin{theorem}[Furstenberg-Glasner \cite{FG10}]
A $G$-space $G \actson (X,\nu)$ is contractive if and only if every point realization of the action is contractible.
\end{theorem}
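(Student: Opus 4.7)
The plan is to handle the two directions separately; ($\Rightarrow$) is a straightforward diagonalization argument, while ($\Leftarrow$) requires choosing a point realization tailored to a given set.

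For ($\Rightarrow$), fix a point realization $(X_0,\nu_0)$ and a point $x\in X_0$. Since $\nu_0$ has full support, each open ball $V_k$ of radius $1/k$ around $x$ has positive $\nu_0$-measure, and contractiveness yields $g_k\in G$ with $g_k\nu_0(V_k)>1-1/k$. For any open neighborhood $U$ of $x$ we have $V_k\subset U$ eventually, so $g_k\nu_0(U)\to 1$; the Portmanteau theorem then gives $g_k\nu_0\to\delta_x$ in weak$^*$, which is exactly contractibility at $x$.

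For ($\Leftarrow$), fix $B\subset X$ with $\nu(B)>0$; the goal is $\sup_g \nu(gB)=1$. The idea is to build a point realization in which $B$ is represented by a clopen set. Take a countable unital $G$-invariant $C^*$-subalgebra $\mathcal{A}\subset L^\infty(X,\nu)$ that contains $\mathbf{1}_B$ and generates the Borel $\sigma$-algebra modulo null sets; its Gelfand spectrum (restricted to the support of the induced measure) is a point realization $(X_0,\nu_0)$ in which $\mathbf{1}_B$ corresponds to a continuous $\{0,1\}$-valued function, so the image $B_0$ of $B$ is clopen and $\nu_0(B_0)=\nu(B)>0$. Pick any $x\in B_0$. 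By hypothesis this realization is contractible, so there exist $g_n$ with $g_n\nu_0\to\delta_x$; continuity of $\mathbf{1}_{B_0}$ forces $g_n\nu_0(B_0)\to 1$, and $G$-equivariance of the measurable isomorphism transfers this back to $g_n\nu(B)\to 1$.

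The real obstacle is producing this adapted point realization: one must know that any given measurable set (indeed, any countable family) can be simultaneously made clopen in some point realization. This is standard via the Gelfand spectrum once one includes $\mathbf{1}_B$ and its countably many $G$-translates in the generating algebra, but it is precisely the step that invokes the Mackey-type machinery for point realizations cited earlier in the paper. Without this flexibility, a naive attempt to pass from the weak$^*$ statement $g_n\nu_0\to\delta_x$ to mass on a merely measurable $B$ would fail, since weak$^*$ convergence does not control the measure of arbitrary Borel sets.
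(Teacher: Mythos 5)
Your forward direction is correct and is in fact a cleaner, more elementary route than the one the paper takes for the relative version (Theorem \ref{T:contractiveexttopo}, of which this theorem is the special case $Y=$ point): the paper runs the argument through the isometry characterization of Proposition \ref{P:contractiveexte} and a decreasing sequence of continuous functions converging to $\bbone_{\{x\}}$, then extracts a weak* limit point; your shrinking-balls-plus-Portmanteau argument gives the same conclusion directly.

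The converse, however, has a genuine gap, and it is precisely at the step you flag as ``standard.'' A point realization, by the definition in this paper, carries a \emph{continuous} $G$-action on a compact metric space. If $\bbone_{B}$ were represented by a continuous function on such a model $X_{0}$, then $g \mapsto g\cdot \bbone_{B}$ would be norm-continuous from $G$ into $L^{\infty}(X,\nu)$ (joint continuity of the action on the compact space $X_{0}$ gives uniform continuity of translates of elements of $C(X_{0})$). But for non-discrete $G$ a generic positive-measure set $B$ satisfies $\| \bbone_{gB} - \bbone_{B} \|_{\infty} = 1$ for $g$ arbitrarily close to the identity (e.g.\ an interval under the translation action of $\mathbb{R}$), so $\bbone_{B}$ is not $G$-continuous and \emph{no} point realization makes $B$ clopen. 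Equivalently, the unital $C^{*}$-algebra generated by $\bbone_{B}$ and its $G$-translates is not separable and does not carry a point-norm-continuous $G$-action, so its Gelfand spectrum is not a compact model in the required sense; also note that for uncountable $G$ there are uncountably many translates, not ``countably many.'' Your argument does go through verbatim when $G$ is countable and discrete, but the paper's standing hypothesis is that $G$ is locally compact second countable. This is exactly the difficulty the paper's proof of Theorem \ref{T:contractiveexttopo} is built to overcome: one replaces $\bbone_{A}$ by the $G$-continuous functions $\bbone_{A} * \psi_{n}$ for an approximate identity $\psi_{n} \in C_{c}(G)$, realizes such a function as continuous in some compact model (citing \cite{FG10}), and then runs a quantitative averaging estimate over a small neighborhood $V$ of the identity to convert near-contraction of the smoothed function into near-contraction of $A$ itself, reaching a contradiction. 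Some substitute for that smoothing-and-averaging step is unavoidable in your approach; without it the converse is only proved for discrete groups.
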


The generalization of the above statement due to the author and J.~Peterson \cite{CP14} is the main technical result in the proof of the uniqueness theorem and factor theorems for relatively contractive maps.  This result also justifies the intuitive idea that contractive spaces have the property that ``the group contracts the measure to every possible point mass".

\subsection{Examples of Contractive Actions}

Poisson boundaries are the main examples of contractive actions.  Introduced by Furstenberg \cite{Fu63} as a means for performing harmonic analysis on Lie groups, Poisson boundaries have led to a variety of deep results in the rigidity theory of lattices in Lie groups, including the celebrated Normal Subgroup Theorem of Margulis \cite{Ma79}.  As our purpose here is to focus on contractive spaces, we include minimal details about Poisson boundaries and refer the reader to the work of Bader and Shalom \cite{BS04} (Section 2) for an excellent overview of the construction of the Poisson boundary and its various properties.

\begin{definition}[Furstenberg \cite{Fu63}]
Let $\Gamma$ be a countable discrete group and $\mu$ a probability measure on $\Gamma$ (an element of $\ell^{1}\Gamma$ that is nonnegative and has norm one) such that the support of $\mu$ generates $\Gamma$ and such that $\mu$ is symmetric: $\mu(\gamma) = \mu(\gamma^{-1})$ for all $\gamma \in \Gamma$.  Consider the random walk on $\Gamma$ with law $\mu$: the space $(\Gamma^{\mathbb{N}},\mu^{\mathbb{N}})$ where $\Gamma$ acts by multiplication on the left of the first element.  The map $T : \Gamma^{\mathbb{N}} \to \Gamma^{\mathbb{N}}$ by $T(\omega_{1},\omega_{2},\ldots) = (\omega_{1}\omega_{2},\omega_{3},\ldots)$ (multiplying the first two elements) commutes with the $\Gamma$-action.  The space of $T$-ergodic components (equivalently, the tail $\sigma$-algebra of the space under the filtration by coordinates) is the \textbf{Poisson boundary} for $\Gamma$ with the measure $\mu$.
\end{definition}

The above definition can be extended in the obvious way to locally compact groups provided the measure $\mu$ on the group $G$ is taken to be nonsingular with respect to the Haar measure (it is enough for some convolution power to be nonsingular).

\begin{theorem}[Jaworksi \cite{Ja94}]
Let $(B,\beta)$ be any Poisson boundary for $G$ (any meaning for any probability measure on $G$).  Then $G \actson (B,\eta)$ is contractive.
\end{theorem}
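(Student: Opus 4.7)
The plan is to invoke the classical Doob/Lévy martingale argument for bounded $\mu$-harmonic functions on $G$. Fix a measurable $A \subseteq B$ with $\beta(A) > 0$; I want to produce a sequence $\{g_n\} \subset G$ with $(g_n\beta)(A) \to 1$. The starting observation is that any Poisson boundary measure is $\mu$-stationary, $\mu * \beta = \beta$, so the function $h(g) := (g\beta)(A)$ is a $[0,1]$-valued $\mu$-harmonic function on $G$: a one-line computation using stationarity gives $\int h(gs)\,d\mu(s) = (g \cdot (\mu*\beta))(A) = (g\beta)(A) = h(g)$.

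Next, I would introduce the $\mu$-random walk $X_n(\omega) = \omega_1 \cdots \omega_n$ on $(G^{\mathbb{N}}, \mu^{\mathbb{N}})$ and consider $M_n := h(X_n)$. Harmonicity of $h$ together with the i.i.d.\ increments makes $(M_n)$ a bounded martingale relative to the natural filtration $\mathcal{F}_n = \sigma(\omega_1, \ldots, \omega_n)$, so Doob's theorem yields $M_n \to M_\infty$ almost surely. The crucial identification is $M_n = \mathbb{E}[1_A \circ \pi \mid \mathcal{F}_n]$, where $\pi \colon G^{\mathbb{N}} \to B$ is the projection to the Poisson boundary. This follows from the Markov property: conditionally on $\mathcal{F}_n$, the future of the walk is an independent $\mu$-walk starting from $X_n$, and by $G$-equivariance of $\pi$ its hitting distribution on $B$ is the translate $X_n\beta$, so $\mathbb{E}[1_A(\pi(\omega)) \mid \mathcal{F}_n] = (X_n\beta)(A) = M_n$. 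Since $1_A \circ \pi$ is tail-measurable, Lévy's upward convergence theorem then forces $M_\infty = 1_A \circ \pi$ almost surely.

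With this identification the conclusion is immediate: the event $\{\pi(\omega) \in A\}$ has $\mu^{\mathbb{N}}$-probability $\pi_*\mu^{\mathbb{N}}(A) = \beta(A) > 0$, and on that event $(X_n(\omega)\beta)(A) = M_n(\omega) \to 1$. Thus for any $\varepsilon > 0$ and $\mu^{\mathbb{N}}$-a.e.\ such $\omega$ there exists $n$ with $(X_n(\omega)\beta)(A) > 1 - \varepsilon$, so $\sup_{g \in G}(g\beta)(A) = 1$, establishing contractiveness.

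The main obstacle is the identification $M_n = \mathbb{E}[1_A \circ \pi \mid \mathcal{F}_n]$; this is where the paper's definition of $(B,\beta)$ as the space of $T$-ergodic components (equivalently, the tail $\sigma$-algebra of the coordinate filtration) is actually used, and it is essentially the content of the Poisson formula expressing bounded harmonic functions via their boundary values. In the locally compact setting one must make a mild technical adjustment (e.g.\ passing through a discretization by a symmetric admissible measure, or working with a countably generated sub-$\sigma$-algebra so that $\pi$ is genuinely defined), but the skeleton of the argument is unchanged.
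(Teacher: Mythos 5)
Your proof is correct: the harmonicity of $g \mapsto (g\beta)(A)$, the bounded martingale $M_n = (X_n\beta)(A)$, the identification $M_n = \mathbb{E}[\bbone_A \circ \pi \mid \mathcal{F}_n]$ via the Markov property and equivariance of the boundary map, and L\'evy's theorem forcing $M_\infty = \bbone_A \circ \pi$ on a set of measure $\beta(A) > 0$ together give exactly the contractiveness claim. The paper itself omits the proof and cites Jaworski, but this is the standard argument, and it is precisely the martingale machinery the paper deploys in Lemma \ref{L:contractivestationary}; your version is in fact sharper there since the explicit identification of the limit as $\bbone_A \circ \pi$ replaces the appeal to the isometry $L^{\infty}(B,\beta) \to L^{\infty}(G)$.
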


Since contractiveness is a property of the measure class, if $(B,\beta)$ is a Poisson boundary of $G$ and $\nu$ is a probability measure in the same class as $\beta$ then $G \actson (X,\nu)$ is also contractive.

The Poisson boundary enjoys an additional property, namely that of stationarity: if $G \actson (X,\nu)$ is a $G$-space and $\mu$ is a probability measure on $G$ then the convolution $\mu * \nu$ is the probability measure on $X$ defined by $\mu * \nu(B) = \int_{G} g\nu(B)~d\mu(g)$; a $G$-space is $\mu$-stationary when $\mu * \nu = \nu$.  The fact that Poisson boundaries are stationary follows from the fact that $T_{*}\mu^{\mathbb{N}} = \mu * \mu^{\mathbb{N}}$.  Stationary spaces have received much attention since the same argument showing that amenable groups always have invariant measures can be used to show that if $G$ is any group acting on a compact metric space $X$ then there always exists a stationary measure on $X$.

These facts lead to the natural question of whether there exist contractive spaces admitting a stationary measure in the measure class (for some probability measure on the group with full support) that are not quotients of the Poisson boundary (and also to the question of whether there exist contractive actions admitting no stationary measures at all).  To the best of our knowledge, this question has been unanswered previously and we now present an example of such a space.

\begin{theorem}
Let $G = \mathbb{R} \rtimes \mathbb{R}^{+}$ be the ``$ax + b$ group" consisting of all maps $\mathbb{R} \to \mathbb{R}$ of the form $x \mapsto ax + b$ for constants $a,b \in \mathbb{R}$.  Then the natural action of $G$ on $\mathbb{R}$ (equipped with an probability measure in the class of the Lebesgue measure on $\mathbb{R}$) is contractive but is not a quotient of the Poisson boundary of $G$ for any symmetric measure $\mu$ on $G$ that is nonsingular with respect to the Haar measure.
\end{theorem}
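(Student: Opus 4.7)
The statement has two independent halves. For contractivity, the plan is to invoke the Furstenberg--Glasner point-realization criterion: it suffices to produce a compact point realization $X_0$ and verify that it is contractible, i.e., that for every $x\in X_0$ there exists a sequence $\{g_n\}\subset G$ with $g_n\nu\to\delta_x$ weakly. I would take the one-point compactification $\widehat{\mathbb{R}}=\mathbb{R}\cup\{\infty\}$, on which the affine $ax+b$ action extends continuously by fixing $\infty$ and on which $\nu$ (extended by zero at $\infty$) has full support, since its density is positive a.e.\ on $\mathbb{R}$, so every neighborhood of $\infty$ has positive $\nu$-mass. Contractibility then splits into two cases: for $x_0\in\mathbb{R}$, use the affine maps $g_n\colon y\mapsto r_ny+x_0$ with $r_n\to 0^+$, which visibly concentrate $\nu$ at $x_0$; for $x_0=\infty$, use the pure dilations $g_n\colon y\mapsto r_ny$ with $r_n\to\infty$, which send $\nu$ to $\delta_\infty$ in $\widehat{\mathbb{R}}$. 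In both cases weak convergence of $g_n\nu$ against $C(\widehat{\mathbb{R}})$ is immediate from dominated convergence.

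For the non-boundary assertion, suppose for contradiction that $(\mathbb{R},\nu)$ is a measurable $G$-quotient of the Poisson boundary $(B,\beta)$ of $(G,\mu)$ for some symmetric Haar-nonsingular $\mu$. Then $\nu$ is $\mu$-stationary, and pushing the standard reverse-martingale convergence on $(B,\beta)$ forward along the quotient map shows that for $\mu^{\mathbb{N}}$-almost every trajectory $\omega=(\xi_1,\xi_2,\ldots)$ the measures $g_n(\omega)\nu$ with $g_n=\xi_1\cdots\xi_n$ converge weakly in $P(\mathbb{R})$ to a point mass $\delta_{x(\omega)}$ with $x(\omega)\in\mathbb{R}$. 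Writing the group elements in affine form $g_n\colon y\mapsto A_ny+B_n$ with $A_n>0$, the pushforward $g_n\nu$ is simply $\nu$ rescaled by $A_n$ and translated by $B_n$; since $\nu$ is a probability measure equivalent to the infinite Lebesgue measure on $\mathbb{R}$, no bounded-length interval has $\nu$-mass arbitrarily close to $1$, and applying this to the intervals arising in the pushforward formula forces $A_n\to 0$ almost surely.

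The contradiction comes from the symmetry of $\mu$. Since inversion on $G$ is $(a,b)\mapsto(a^{-1},-b/a)$, symmetry of $\mu$ makes $\log a$ a symmetric random variable on $\mathbb{R}$, and non-singularity relative to Haar makes it non-degenerate. Hence $\log A_n=\sum_{i=1}^{n}\log a_i$ is a non-degenerate symmetric random walk on $\mathbb{R}$, and a standard Kolmogorov $0$--$1$ argument combined with symmetry yields $\limsup_n\log A_n=+\infty$ and $\liminf_n\log A_n=-\infty$ almost surely, contradicting $A_n\to 0$.

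I expect the main obstacle to be the step linking weak convergence of $g_n\nu$ to a point mass in $\mathbb{R}$ with the scaling condition $A_n\to 0$: this is where one must crucially use the non-compactness of the line together with the Lebesgue-class assumption, and it is exactly the feature that causes $(\mathbb{R},\nu)$ to fail to be a Poisson-boundary quotient while still being contractive, since on the one-point compactification the point $\infty$ acts as a ``sink'' absorbing mass that escapes to infinity under dilations.
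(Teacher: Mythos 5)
Your argument is genuinely different from the paper's, which is essentially two citations: contractivity is attributed to Jaworski, and the non-boundary claim follows by combining Glasner--Weiss's observation that the action fails to be doubly ergodic with Kaimanovich's theorem that every quotient of a Poisson boundary is doubly ergodic. Your second half replaces the double-ergodicity obstruction with a direct probabilistic one, and it is correct: if $(\mathbb{R},\nu)$ were a boundary quotient then $\xi_1\cdots\xi_n\nu\to\delta_{x(\omega)}$ almost surely with $x(\omega)$ finite (the limit charges the point at infinity with probability zero because $\int \nu_\omega\,d\mu^{\mathbb{N}}=\nu$); the observation that $\sup_t \nu([t,t+L])<1$ for each fixed $L$ then forces the dilation coefficients $A_n\to 0$, while symmetry and Haar-nonsingularity of $\mu$ make $\log A_n$ a nondegenerate symmetric random walk, which oscillates ($\limsup=+\infty$ a.s.) by the Hewitt--Savage trichotomy plus symmetry. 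This is a clean, self-contained alternative to the double-ergodicity route, and it isolates exactly why the symmetry hypothesis matters (for $\mu$ with negative drift in $\log a$ the line \emph{is} a Poisson boundary of the $ax+b$ group).

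The gap is in the first half. The Furstenberg--Glasner criterion, as stated and as it must be used, says that $(X,\nu)$ is contractive if and only if \emph{every} point realization is contractible; exhibiting \emph{one} contractible compact model is not sufficient. Concretely, $g_n\nu\to\delta_{x}$ in weak* only controls $\liminf_n g_n\nu(U)$ for \emph{open} $U$, whereas the definition of contractive quantifies over arbitrary measurable $B$ of positive measure; for a closed nowhere dense $B$ with $\nu(B)>0$ and $x\in B$, weak* convergence to $\delta_x$ gives no lower bound at all on $g_n\nu(B)$. (This is precisely why the converse direction of Theorem \ref{T:contractiveexttopo} has to pass to a \emph{different} compact model adapted to the $G$-continuous function $\bbone_A*\psi$.) The repair in your example is short and stays in your spirit: given measurable $B$ with $\nu(B)>0$, choose a Lebesgue density point $x_0$ of $B$ and set $g_r(y)=(y-x_0)/r$, an element of $G$; the density theorem gives $\mathrm{Leb}\big([-M,M]\setminus g_r(B)\big)\to 0$ as $r\to 0^+$ for every $M$, hence $\nu(g_r(B))\to 1$ by absolute continuity of $\nu$ with respect to Lebesgue, which verifies the definition of contractive directly, with no compactification and no appeal to the point-realization criterion.
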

\begin{proof}
That the action is contractive is a consequence of the work of Jaworski \cite{Ja94}.  Glasner and Weiss (\cite{glasnerweiss} Example 3.5) observed that the action is not doubly ergodic in the sense of Kaimanovich \cite{kaidouble}.  However, Kaimanovich showed that the action of a group on any quotient of any Poisson boundary (with respect to a stationary admissible measure) is doubly ergodic.  We therefore conclude that this action is contractive but is not a boundary action.
\end{proof}

\section{Relatively Contractive Maps}

Relatively contractive maps were introduced in \cite{CP14} as a generalization of contractive spaces to quotient maps.  We present here the definition of such maps and state (either with proof or with reference to where proofs can be found) the various properties that make them useful.  The main application of these results are the factor theorems appearing in the following section which in turn are the central result needed in the rigidity theorems presented in the final section of the paper.

\subsection{Conjugates of Disintegration Measures}

The principal notion in formulating the idea of relatively contractive maps is to ``conjugate" the disintegration measures.
For a $G$-map of $G$-spaces $\pi : (X,\nu) \to (Y,\eta)$, the disintegration of $\nu$ over $\eta$ can be summarized as saying that for almost every $y \in Y$ there is a unique measure $D_{\pi}(y) \in P(X)$ such that $D_{\pi}(y)$ is supported on the fiber over $y$ and $\int_{Y} D_{\pi}(y)~d\eta(y) = \nu$.

For $g \in G$ and $y \in Y$, we have that $D_{\pi}(gy)$ is supported on the fiber over $gy$, that is, on $\pi^{-1}(gy) = g \pi^{-1}(y)$, and that for any Borel $B \subseteq X$, we have that $gD_{\pi}(y)(B) = D_{\pi}(y)(g^{-1}B)$
meaning that $gD_{\pi}(y)$ is supported on $g\pi^{-1}(y)$.  Therefore we can formulate the following:
\begin{definition}
Let $\pi : (X,\nu) \to (Y,\rho)$ be a $G$-map of $G$-spaces.
The \textbf{conjugated disintegration measure} over $\pi$ at a point $y \in Y$ by the group element $g \in G$ is
\[
D_{\pi}^{(g)}(y) = g^{-1}D_{\pi}(gy).
\]
\end{definition}

The preceding discussion shows that $D_{\pi}^{(g)}(y)$ is supported on $g^{-1} g \pi^{-1}(y) = \pi^{-1}(y)$.  Hence:
\begin{proposition}\label{P:conjugateddisintegration}
Let $\pi : (X,\nu) \to (Y,\eta)$ be a $G$-map of $G$-spaces and fix $y \in Y$.  The conjugated disintegration measures
\[
\mathcal{D}_{y} = \{ g^{-1}D_{\pi}(gy) : g \in G \}
\]
are all supported on $\pi^{-1}(y)$.
\end{proposition}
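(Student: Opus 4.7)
The proposition is essentially an unwinding of three definitions: the support condition for disintegration, the $G$-equivariance of $\pi$, and the action of $G$ on measures via $(g\mu)(E) = \mu(g^{-1}E)$. The plan is to chain these together for a fixed generic $y$.

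First I would invoke $G$-equivariance. Since $\pi$ is a $G$-map, $\pi(gx) = g\pi(x)$ holds off a $\nu$-null set, so for almost every $y \in Y$ the fiber identity $\pi^{-1}(gy) = g\pi^{-1}(y)$ holds (modulo a null set in $X$) for every $g \in G$. Fix such a $y$ — this is where the ``fix $y \in Y$'' in the statement is really ``fix $y$ in the full-measure set on which disintegration and equivariance behave well for all $g$''.

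Next I would apply the defining support property of the disintegration at the point $gy$: the measure $D_{\pi}(gy)$ is supported on $\pi^{-1}(gy)$. Using the fiber identity from the first step, $D_{\pi}(gy)$ is supported on $g\pi^{-1}(y)$. Finally, for any Borel $B \subseteq X$ disjoint from $\pi^{-1}(y)$, the translate $gB$ is disjoint from $g\pi^{-1}(y)$, hence
\[
D_{\pi}^{(g)}(y)(B) \;=\; \bigl(g^{-1}D_{\pi}(gy)\bigr)(B) \;=\; D_{\pi}(gy)(gB) \;=\; 0,
\]
using the convention $(g^{-1}\mu)(B) = \mu(gB)$. This says exactly that $D_{\pi}^{(g)}(y)$ is supported on $\pi^{-1}(y)$, proving the proposition.

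The argument has no real obstacle; the only subtlety is the almost-everywhere nature of both disintegration and equivariance. This is resolved at the very start by choosing $y$ in the intersection of the full-measure sets on which (i) the disintegration identity $D_{\pi}(y') \subseteq \pi^{-1}(y')$ holds at the point $y' = gy$ for every $g$, and (ii) $\pi^{-1}(gy) = g\pi^{-1}(y)$ modulo null sets for every $g$. Both sets are $G$-invariant and conull in $Y$, so their intersection is still conull, which is the sense in which the statement is to be read.
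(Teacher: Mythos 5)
Your proof is correct and follows essentially the same route as the paper's (brief) argument: equivariance gives $\pi^{-1}(gy) = g\pi^{-1}(y)$, the disintegration support condition places $D_{\pi}(gy)$ on that set, and translating by $g^{-1}$ returns the support to $\pi^{-1}(y)$. Your extra care about fixing $y$ in a conull set where these identities hold simultaneously for all $g$ is a reasonable refinement of what the paper leaves implicit.
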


Another approach to the conjugates of disintegration measures is to observe that:
\begin{proposition}
Let $\pi : (X,\nu) \to (Y,\eta)$ be a $G$-map of $G$-spaces.  For any $g \in G$ then $\pi : (X, g^{-1}\nu) \to (Y, g^{-1}\eta)$ is also a $G$-map of $G$-spaces.  Let $D_{\pi} : Y \to P(X)$ be the disintegration of $\nu$ over $\eta$.  Then $D_{\pi}^{(g)}$ is the disintegration of $g^{-1}\nu$ over $g^{-1}\eta$.
\end{proposition}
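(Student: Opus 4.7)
The plan is to invoke the almost-sure uniqueness of disintegration: any measurable assignment $y \mapsto \mu_y \in P(X)$ such that $\mu_y$ is supported on $\pi^{-1}(y)$ and $\int \mu_y\, d(g^{-1}\eta)(y) = g^{-1}\nu$ must coincide with the disintegration of $g^{-1}\nu$ over $g^{-1}\eta$ almost everywhere. So I only need to check those two properties for the candidate $y \mapsto D_\pi^{(g)}(y) = g^{-1}D_\pi(gy)$.

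First I would verify that $\pi : (X, g^{-1}\nu) \to (Y, g^{-1}\eta)$ is genuinely a $G$-map of $G$-spaces. Quasi-invariance of $g^{-1}\nu$ and $g^{-1}\eta$ under $G$ is immediate because $\nu$ and $\eta$ are quasi-invariant, and $G$-equivariance of $\pi$ does not depend on the measure. The pushforward identity $\pi_*(g^{-1}\nu) = g^{-1}\eta$ follows from $\pi_*\nu = \eta$ together with equivariance: for any Borel $E \subseteq Y$,
\[
\pi_*(g^{-1}\nu)(E) = \nu(g\pi^{-1}(E)) = \nu(\pi^{-1}(gE)) = \eta(gE) = (g^{-1}\eta)(E).
\]

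Next I would check the support condition. Proposition~\ref{P:conjugateddisintegration} already records that $D_\pi^{(g)}(y) = g^{-1}D_\pi(gy)$ is supported on $\pi^{-1}(y)$, because $D_\pi(gy)$ is supported on $\pi^{-1}(gy) = g\pi^{-1}(y)$ and translating by $g^{-1}$ moves this onto $\pi^{-1}(y)$.

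The remaining step, which is the only real computation, is the integral identity. By the change of variables $y' = gy$, which pushes $g^{-1}\eta$ forward to $\eta$,
\[
\int_Y D_\pi^{(g)}(y)\, d(g^{-1}\eta)(y) = \int_Y g^{-1}D_\pi(gy)\, d(g^{-1}\eta)(y) = g^{-1}\!\int_Y D_\pi(y')\, d\eta(y') = g^{-1}\nu,
\]
where I used linearity/measurability to pull the $g^{-1}$ outside and then the defining property $\int D_\pi\, d\eta = \nu$ of the original disintegration. Uniqueness of disintegration then forces $D_\pi^{(g)} = D_{\pi,\, g^{-1}\nu\mid g^{-1}\eta}$ almost everywhere, which is the claim.

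No step here is a serious obstacle; the only mild care needed is the measurability of $y \mapsto g^{-1}D_\pi(gy)$ as a map into $P(X)$, which follows because $y \mapsto gy$ is a Borel isomorphism of $Y$ and $\mu \mapsto g^{-1}\mu$ is a Borel map on $P(X)$, so the composition with the Borel map $D_\pi$ is Borel. With that in hand, the uniqueness theorem for disintegrations closes the argument.
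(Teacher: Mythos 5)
Your proposal is correct and follows essentially the same route as the paper: verify that $\pi_*(g^{-1}\nu) = g^{-1}\eta$ via equivariance, cite the support property of the conjugated measures from Proposition \ref{P:conjugateddisintegration}, and establish $\int_Y g^{-1}D_\pi(gy)\,d(g^{-1}\eta)(y) = g^{-1}\nu$ by the same change of variables before invoking uniqueness of disintegration. The extra remarks on measurability and on the pushforward identity are harmless elaborations of steps the paper treats as immediate.
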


\begin{proof}
To see that $\pi$ maps $(X,g^{-1}\nu)$ to $(Y,g^{-1}\eta)$ follows from $\pi$ being $G$-equivariant.

We have already seen that $g^{-1} D_{\pi}(gy)$ is supported on $\pi^{-1}(y)$ so to prove the proposition it remains only to show that $\int g^{-1} D_{\pi}(gy)~dg^{-1}\eta(y) = g^{-1}\nu$.  This is clear as
\begin{align*}
\int_{Y} g^{-1} D_{\pi}(gy)~dg^{-1}\eta(y) &= g^{-1} \int_{Y} D_{\pi}(g g^{-1} y)~d\eta(y) = g^{-1} \int_{Y} D_{\pi}(y)~d\eta(y) = g^{-1}\nu
\end{align*}
since $D_{\pi}$ disintegrates $\nu$ over $\eta$.
\end{proof}

A basic fact we will need in what follows is that  the conjugated disintegration measures are mutually absolutely continuous to one another (over a fixed point $y$ of course, as $y$ varies they have disjoint supports):
\begin{proposition}
Let $\pi : (X,\nu) \to (Y, \eta)$ be a $G$-map of $G$-spaces.  For almost every $y$ the set
\[
\mathcal{D}_{y} = \{ g^{-1}D_{\pi}(gy) : g \in G \}
\]
is a collection of mutually absolutely continuous probability measures supported on $\pi^{-1}\{ y \}$.
\end{proposition}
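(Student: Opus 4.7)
The plan is to invoke the preceding proposition, which identifies $D_{\pi}^{(g)}$ as the disintegration of the translated measure $g^{-1}\nu$ over $g^{-1}\eta$ via $\pi$, and then to exploit the quasi-invariance of both $\nu$ and $\eta$ to write each $D_{\pi}^{(g)}(y)$ as a strictly positive Radon--Nikodym multiple of $D_{\pi}(y)$.

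For fixed $g \in G$, set $h_{g} = dg^{-1}\nu/d\nu$ and $k_{g} = dg^{-1}\eta/d\eta$; both are strictly positive a.e.\ by quasi-invariance. Since $\pi_{*}(g^{-1}\nu) = g^{-1}\eta$ by $G$-equivariance of $\pi$, one has $\pi_{*}(h_{g}\nu) = k_{g}\eta$, and combining this with Fubini and the disintegration formula yields $\int h_{g}\,dD_{\pi}(y) = k_{g}(y)$ for $\eta$-a.e.\ $y$. A direct check then shows that the family $y \mapsto k_{g}(y)^{-1} h_{g}|_{\pi^{-1}(y)}\, D_{\pi}(y)$ is a probability measure on $\pi^{-1}(y)$ that disintegrates $g^{-1}\nu$ over $g^{-1}\eta$; by uniqueness of disintegration and the preceding proposition, it equals $D_{\pi}^{(g)}(y)$ for $\eta$-a.e.\ $y$, giving
\[
D_{\pi}^{(g)}(y) \;=\; \frac{h_{g}|_{\pi^{-1}(y)}}{k_{g}(y)}\, D_{\pi}(y).
\]
Since $h_{g} > 0$ almost surely, this Radon--Nikodym factor is positive on the fiber, so $D_{\pi}^{(g)}(y) \sim D_{\pi}(y)$ for every $y$ outside some $\eta$-null set $N_{g}$ depending on $g$.

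To upgrade this to the uniform-in-$g$ statement, I would fix a countable dense subgroup $G_{0} \subseteq G$ (available since $G$ is LCSC) and let $N = \bigcup_{g \in G_{0}} N_{g}$, which is still $\eta$-null. For every $y \notin N$, each measure in $\{g^{-1}D_{\pi}(gy) : g \in G_{0}\}$ is equivalent to the common reference $D_{\pi}(y)$, so the whole countable collection is mutually absolutely continuous. To pass from $G_{0}$ to all of $G$, I would work in a point realization in which $G$ acts continuously on both $X$ and $Y$, select jointly Borel cocycle representatives of $h_{g}$ and $k_{g}$ via measurable selection, and propagate the positivity of the Radon--Nikodym derivative to arbitrary $g$ using the Radon--Nikodym cocycle identity together with approximation along $G_{0}$.

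The main obstacle I expect is precisely this last uniformity step: while the pointwise-in-$g$ identification of $D_{\pi}^{(g)}(y)$ as a positive multiple of $D_{\pi}(y)$ is a routine Radon--Nikodym computation, producing a single $\eta$-null set in $Y$ that works for \emph{every} $g \in G$ (and not merely a countable dense subset) requires care to avoid the usual pathologies associated with uncountable unions of null sets.
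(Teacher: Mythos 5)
Your argument is essentially the paper's: the identity $D_{\pi}^{(g)}(y) = \bigl(h_{g}|_{\pi^{-1}(y)}/k_{g}(y)\bigr)\,D_{\pi}(y)$ is precisely the content of Lemma \ref{L:abscontmeas} applied to $g^{-1}\nu \sim \nu$, and taking the union of the null sets $N_{g}$ over a countable group is the entirety of the proof the paper gives (it explicitly restricts to countable $G$ and defers the locally compact case to \cite{CP14}). The one caveat concerns your proposed passage from $G_{0}$ to all of $G$: approximation along a dense subgroup is delicate because, as the paper itself notes later, $g \mapsto D_{\pi}^{(g)}(y)$ is not continuous in general -- but since the paper does not prove the uncountable case here either, your honestly flagged gap is exactly where its own proof also stops.
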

\begin{proof}
We will prove this in the case when $G$ is countable, the reader is referred to \cite{CP14} for a proof in the locally compact case.
For $g \in G$ write
\[
A_{g} = \{ y \in Y : D_{\pi}(y)~\text{and}~g^{-1}D_{\pi}(gy) \text{ are not in the same measure class} \}.
\]
Then $A_{g}$ is a Borel set for each $g \in G$ since $D_{\pi} : Y \to P(X)$ is a Borel map and the equivalence relation on $P(X)$ given by $\alpha \sim \beta$ if and only if $\alpha$ and $\beta$ is in the same measure class is Borel.

Since $g^{-1}D_{\pi}(gy)$ is the disintegration of $g^{-1}\nu$ over $g^{-1}\eta$ and $g^{-1}\nu$ is in the same measure class as $\nu$, Lemma \ref{L:abscontmeas} (following the proof) gives that $\eta(A_{g}) = 0$ for each $g \in G$.
Therefore $\eta(\bigcup_{g \in G} A_{g}) = 0$ since $G$ is countable, proving the theorem.
\end{proof}

\begin{lemma}\label{L:abscontmeas}
Let $(X,\nu)$ be a probability space and $\pi : (X,\nu) \to (Y,\pi_{*}\nu)$ a measurable map to a probability space.  Let $\alpha$ be a probability measure in the same measure class as $\nu$.  Let $D(y)$ denote the disintegration of $\nu$ over $\pi_{*}\nu$ via $\pi$ and let $D^{\prime}(y)$ denote the disintegration of $\alpha$ over $\pi_{*}\alpha$ via $\pi$.  Then for almost every $y \in Y$, $D(y)$ and $D^{\prime}(y)$ are in the same measure class.
\end{lemma}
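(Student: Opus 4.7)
The plan is to express the disintegration $D'$ of $\alpha$ explicitly in terms of $D$ and the Radon-Nikodym derivative $f = d\alpha/d\nu$, then read off mutual absolute continuity fiber-by-fiber.

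First, since $\nu$ and $\alpha$ are in the same measure class, $f = d\alpha/d\nu$ exists and is strictly positive outside of a $\nu$-null (equivalently $\alpha$-null) set $N \subseteq X$. Define
\[
h(y) = \int_{X} f(x)\,dD(y)(x).
\]
The key calculation is that for any Borel $E \subseteq Y$,
\[
\pi_{*}\alpha(E) = \alpha(\pi^{-1}E) = \int_{\pi^{-1}E} f \, d\nu = \int_{E} h(y)\,d\pi_{*}\nu(y),
\]
so $h = d\pi_{*}\alpha / d\pi_{*}\nu$. In particular $h(y) > 0$ for $\pi_{*}\nu$-almost every $y$, since $\pi_{*}\alpha$ is in the same measure class as $\pi_{*}\nu$.

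Now define a candidate disintegration by
\[
\tilde D(y) = \frac{1}{h(y)}\, f \cdot D(y),
\]
meaning $\tilde D(y)$ is the measure with density $f/h(y)$ with respect to $D(y)$, defined on the full-measure set where $h(y) > 0$. I would then verify three things: (a) $\tilde D(y)$ is supported on $\pi^{-1}(y)$ (immediate from the corresponding property of $D(y)$); (b) $\tilde D(y)(X) = h(y)^{-1}\int f\,dD(y) = 1$; and (c) $\tilde D$ disintegrates $\alpha$ over $\pi_{*}\alpha$, which reduces via Fubini to
\[
\int_{Y}\tilde D(y)(B)\,d\pi_{*}\alpha(y) = \int_{Y}\int_{B}f\,dD(y)\,d\pi_{*}\nu(y) = \int_{B} f\,d\nu = \alpha(B).
\]
Uniqueness of disintegration then forces $D'(y) = \tilde D(y)$ for $\pi_{*}\alpha$-almost every $y$, and hence for $\pi_{*}\nu$-almost every $y$.

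Finally, I would deduce the absolute continuity statement. For $\pi_{*}\nu$-almost every $y$ we have $D(y)(N) = 0$ (by Fubini applied to $\int_{Y} D(y)(N)\,d\pi_{*}\nu(y) = \nu(N) = 0$), so $f > 0$ on a $D(y)$-conull set; combined with $h(y) > 0$, the density $f/h(y)$ of $D'(y)$ with respect to $D(y)$ is strictly positive $D(y)$-almost everywhere, giving $D(y) \sim D'(y)$. The only real subtlety is the bookkeeping of null sets—keeping track of where $f > 0$, where $h > 0$, and where the identification $D' = \tilde D$ holds—so that the exceptional $y$'s can be absorbed into a single $\pi_{*}\nu$-null set; beyond that the argument is a direct Radon-Nikodym computation.
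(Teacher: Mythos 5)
Your proof is correct and follows essentially the same route as the paper's: identify $D'(y)$ as the measure with density $\frac{d\alpha}{d\nu}$ (restricted to the fiber) against $D(y)$ via uniqueness of disintegration, then conclude mutual absolute continuity from positivity of that density. In fact your version is slightly more careful than the paper's, which writes $D'(y) = f_y D(y)$ without the normalizing factor $h(y) = \frac{d\pi_*\alpha}{d\pi_*\nu}(y)$ that you correctly include to make $D'(y)$ a probability measure.
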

\begin{proof}
Since $\alpha$ and $\nu$ are in the same measure class, the Radon-Nikodym derivative $\frac{d\alpha}{d\nu}$ exists and is in $L^{1}(X,\nu)$.  Let $f_{y}$ be the restriction of $\frac{d\alpha}{d\nu}$ to $\pi^{-1}(y)$.  Then by the uniqueness of the disintegration, $D^{\prime}(y) = f_{y}D(y)$ almost surely.  Therefore $D^{\prime}(y)$ is absolutely continuous with respect to $D(y)$ almost surely.  By a symmetric argument, $D(y)$ is absolutely continuous with respect to $D^{\prime}(y)$ so they are in the same measure class.
\end{proof}

\subsection{The Definition of Relatively Contractive Maps}

\begin{definition}
Let $\pi : (X, \nu) \to (Y, \eta)$ be a $G$-map of $G$-spaces.  We say $\pi$ is \textbf{relatively contractive} when for almost every $y \in Y$ and any measurable $B \subseteq X$ with $D_{\pi}(y)(B) > 0$ it holds that $\sup_{g \in G} D_{\pi}^{(g)}(y)(B) = 1$.
\end{definition}

Just as measure-preserving actions are precisely those actions which are relatively measure-preserving extensions of a point (an easy exercise from the definition of relative measure-preserving), relatively contractive maps generalize contractive spaces:
\begin{proposition}
A $G$-space $(X, \nu)$ is contractive if and only if it is a relatively contractive extension of a point.
\end{proposition}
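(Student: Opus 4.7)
The plan is to unwind both definitions in the special case where the target space is a single point, and verify that they literally coincide. Let $(Y,\eta) = (\{\ast\},\delta_\ast)$ be the trivial one-point $G$-space, which is the unique (up to isomorphism) point object in the category of $G$-spaces, and let $\pi : (X,\nu) \to (\{\ast\},\delta_\ast)$ be the unique $G$-map.

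First I would compute the disintegration: since $\pi^{-1}(\ast) = X$ and $\int D_\pi(\ast)\,d\delta_\ast = \nu$, uniqueness of disintegration forces $D_\pi(\ast) = \nu$. Next, because the $G$-action on $\{\ast\}$ is trivial we have $g\ast = \ast$ for every $g \in G$, and so
\[
D_\pi^{(g)}(\ast) = g^{-1} D_\pi(g\ast) = g^{-1}\nu.
\]
Finally, for any measurable $B \subseteq X$,
\[
D_\pi^{(g)}(\ast)(B) = g^{-1}\nu(B) = \nu(gB),
\]
by the definition of the translated measure given in Section~2.

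With these identifications the condition that $\pi$ be relatively contractive, namely that for (the unique) $\ast$ and every $B$ with $D_\pi(\ast)(B) > 0$ one has $\sup_{g \in G} D_\pi^{(g)}(\ast)(B) = 1$, becomes: for every $B$ with $\nu(B) > 0$, $\sup_{g \in G} \nu(gB) = 1$. This is verbatim the definition of $G \actson (X,\nu)$ being contractive. Both directions of the equivalence follow at once, and there is no substantive obstacle — the content of the proposition is the observation that the relative definition was correctly formulated to restrict to the absolute one when the base is trivial.
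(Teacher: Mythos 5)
Your proof is correct and follows essentially the same route as the paper's: identify $D_\pi(\ast)=\nu$, observe $D_\pi^{(g)}(\ast)=g^{-1}\nu$ since the point is fixed, and note that $g^{-1}\nu(B)=\nu(gB)$ recovers the definition of contractive verbatim. The only difference is that you spell out the translation-of-measure step more explicitly than the paper does.
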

\begin{proof}
In the case where $(Y, \eta) = 0$ is the trivial one point system, the disintegration measure is always $\nu$ and so being a relatively contractive extension reduces to the definition of contractive: $g^{-1}D_{\pi}(g \cdot 0) = g^{-1}\nu$ for all $g \in G$ since $g \cdot 0 = 0$ and therefore $\sup_{g} D_{\pi}^{(g)}(0)(B) = 1$ implies $\sup_{g} g^{-1}D_{\pi}(0)(B) = 1$ so $\sup_{g} g^{-1}\nu(B) = 1$ for all measurable $B$ with $\nu(B) > 0$.
\end{proof}

We remark that if $\pi : (X,\nu) \to (Y,\eta)$ is a relatively measure-preserving $G$-map of $G$-spaces then the conjugated disintegration measures have the property that $D_{\pi}^{(g)}(y) = D_{\pi}(y)$ for all $g \in G$.  From this, it is easy to obtain the first indication that relatively contractive maps are orthogonal to relatively measure-preserving maps:
\begin{proposition}\label{P:bothrels}
Let $\pi : (X,\nu) \to (Y,\eta)$ be a $G$-map of $G$-spaces that is both relatively measure-preserving and relatively contractive.  Then $\pi$ is an isomorphism.
\end{proposition}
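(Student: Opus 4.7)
The plan is to combine the two hypotheses at the level of the disintegration $D_{\pi}$ to force each fiber measure to be a point mass, and then extract a measurable $G$-equivariant section $\sigma : Y \to X$ that inverts $\pi$.

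First, since $\pi$ is relatively measure-preserving, the defining property $D_{\pi}(gy) = gD_{\pi}(y)$ gives, for every $g \in G$ and almost every $y \in Y$,
\[
D_{\pi}^{(g)}(y) = g^{-1}D_{\pi}(gy) = g^{-1}gD_{\pi}(y) = D_{\pi}(y).
\]
Next I would feed this into the relatively contractive hypothesis: for a.e.\ $y$ and any measurable $B \subseteq X$ with $D_{\pi}(y)(B) > 0$,
\[
1 = \sup_{g \in G} D_{\pi}^{(g)}(y)(B) = \sup_{g \in G} D_{\pi}(y)(B) = D_{\pi}(y)(B).
\]
Thus for a.e.\ $y$ the probability measure $D_{\pi}(y)$, which is supported on the fiber $\pi^{-1}(y)$, has the property that every positive measure set has full measure.

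The key structural step is to conclude from this that $D_{\pi}(y)$ is a Dirac mass. Since $X$ is standard Borel, $\pi^{-1}(y)$ is standard Borel, so we may fix a countable algebra $\{B_n\}$ generating its Borel $\sigma$-algebra and separating points; for each $n$ either $D_{\pi}(y)(B_n) = 1$ or $D_{\pi}(y)(B_n^c) = 1$, and the intersection of the corresponding chosen sets is a singleton $\{\sigma(y)\}$ carrying all the mass. Hence $D_{\pi}(y) = \delta_{\sigma(y)}$ for a.e.\ $y$, and measurability of $D_{\pi} : Y \to P(X)$ together with measurability of the set of point masses (a Borel subset of $P(X)$) yields a measurable section $\sigma : Y \to X$ with $\pi \circ \sigma = \mathrm{id}_Y$ almost everywhere. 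The expected main obstacle is packaging this ``every positive set is conull $\Rightarrow$ point mass'' step together with the measurable selection, but in the standard Borel setting it is routine.

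Finally, I would verify that $\sigma$ implements a $G$-isomorphism. Integrating, $\nu = \int_{Y} D_{\pi}(y)\,d\eta(y) = \int_{Y} \delta_{\sigma(y)}\,d\eta(y) = \sigma_{*}\eta$, so $\sigma$ pushes $\eta$ to $\nu$ and is a measurable inverse to $\pi$ modulo null sets. For $G$-equivariance, apply $G$-equivariance of $D_{\pi}$: $\delta_{\sigma(gy)} = D_{\pi}(gy) = gD_{\pi}(y) = g\delta_{\sigma(y)} = \delta_{g\sigma(y)}$, so $\sigma(gy) = g\sigma(y)$ for a.e.\ $y$ and every $g \in G$ (in the countable case; for the locally compact case this follows from the same Fubini-type argument used throughout the paper). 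Therefore $\pi$ is a $G$-isomorphism.
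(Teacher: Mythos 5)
Your proof is correct, and its engine is exactly the paper's: relative measure-preservation gives $D_{\pi}^{(g)}(y) = D_{\pi}(y)$, and feeding this into relative contractiveness forces $D_{\pi}(y)(B) \in \{0,1\}$ for every measurable $B$ and almost every $y$. The only divergence is in the endgame. You upgrade this zero--one law to the statement that each $D_{\pi}(y)$ is a Dirac mass (via a countable separating algebra in the standard Borel fiber) and then extract a measurable $G$-equivariant section $\sigma$ with $D_{\pi}(y) = \delta_{\sigma(y)}$, which gives an explicit pointwise inverse to $\pi$. The paper instead stays at the level of the measure algebra: from $D_{\pi}(y)(B) = \bbone_{\pi(B)}(y)$ a.e.\ it deduces $\nu(B) = \eta(\pi(B))$ for every $B$, i.e.\ every measurable set in $X$ agrees mod null with a $\pi$-pullback, so the pullback $L^{\infty}(Y,\eta) \to L^{\infty}(X,\nu)$ is onto and $\pi$ is an isomorphism of standard probability spaces. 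Your route is a bit longer but produces the inverse map concretely; the paper's is shorter but leans on the standard equivalence between measure-algebra isomorphisms and point isomorphisms. Both arguments share the same (harmless, and equally present in the paper) quantifier glossing over ``for all $g$, for a.e.\ $y$'' in the uncountable case, which you at least acknowledge.
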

\begin{proof}
Let $B$ be a measurable subset of $X$.  Since $\pi$ is relatively contractive, for almost every $y$ such that $D_{\pi}(y)(B) > 0$ there exists $\{ g_{n} \}$ in $G$ such that $D_{\pi}^{(g_{n})}(y)(B) \to 1$.  Since $\pi$ is relatively measure-preserving, $D_{\pi}^{(g_{n}})(y)(B) = D_{\pi}(y)(B)$.  Therefore $D_{\pi}(y)(B) = 1$ for almost every $y$ such that $D_{\pi}(y)(B) > 0$.  Then
\[
\nu(B) = \int_{Y} D_{\pi}(y)(B)~d\eta(y) = \int_{Y} \bbone_{\pi(B)}(y)~d\eta(y) = \eta(\pi(B)).
\]
As this holds for every measurable set $B$, $\pi$ is an isomorphism.
\end{proof}

\subsection{The Algebraic Characterization}

Generalizing Jaworksi \cite{Ja94}, we may characterize relatively contractive maps algebraically:
\begin{proposition}\label{P:contractiveexte}
Let $\pi : (X, \nu) \to (Y, \rho)$ be a $G$-map of $G$-spaces.  Then $\pi$ is relatively contractive if and only if the map $f \mapsto D_{\pi}^{(g)}(y)(f)$ is an isometry between $L^{\infty}(X, D_{\pi}(y))$ and $L^{\infty}(G, \mathrm{Haar})$ for almost every $y \in Y$ (here $D_{\pi}^{(g)}(y)(f)$ is a function of $g$).
\end{proposition}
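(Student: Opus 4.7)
The plan is to show that the map $\Phi : L^{\infty}(X, D_{\pi}(y)) \to L^{\infty}(G, \mathrm{Haar})$ sending $f$ to the function $g \mapsto D_{\pi}^{(g)}(y)(f)$ is always a contraction, and then to argue that it is norm-preserving precisely when the relatively contractive condition holds. The contraction inequality $\|\Phi(f)\|_{\infty} \leq \|f\|_{\infty}$ is automatic from the earlier proposition that the conjugated disintegration measures $D_{\pi}^{(g)}(y)$ are all mutually absolutely continuous for a.e.\ $y$: this gives $\|f\|_{L^{\infty}(D_{\pi}^{(g)}(y))} = \|f\|_{L^{\infty}(D_{\pi}(y))}$ for every $g$, hence $|D_{\pi}^{(g)}(y)(f)| \leq \|f\|_{L^{\infty}(D_{\pi}(y))}$.

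For the $(\Leftarrow)$ direction, assume $\Phi$ is an isometry on a conull set of $y$'s. Given any measurable $B \subseteq X$ with $D_{\pi}(y)(B) > 0$, applying $\Phi$ to $\bbone_{B}$ gives $\|g \mapsto D_{\pi}^{(g)}(y)(B)\|_{L^{\infty}(G, \mathrm{Haar})} = 1$. Since the essential supremum never exceeds the pointwise supremum (which itself is at most $1$), we obtain $\sup_{g \in G} D_{\pi}^{(g)}(y)(B) = 1$, which is exactly the relatively contractive condition.

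For $(\Rightarrow)$, fix a generic $y$ and $f \in L^{\infty}(X, D_{\pi}(y))$ with $\|f\|_{\infty} = c > 0$. For any $\epsilon > 0$, at least one of $A_{+} = \{f > c - \epsilon\}$ or $A_{-} = \{f < -(c-\epsilon)\}$ has positive $D_{\pi}(y)$-measure; replacing $f$ by $-f$ if necessary, assume $D_{\pi}(y)(A_{+}) > 0$. Applying relative contractivity to $A_{+}$ produces $g \in G$ with $D_{\pi}^{(g)}(y)(A_{+}) > 1 - \epsilon/c$, and then
\[
D_{\pi}^{(g)}(y)(f) \geq (c-\epsilon)\left(1 - \tfrac{\epsilon}{c}\right) - c \cdot \tfrac{\epsilon}{c} \geq c - 3\epsilon.
\]
Since $\epsilon > 0$ is arbitrary, the pointwise supremum of $\Phi(f)$ is $c = \|f\|_{\infty}$.

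The main obstacle is the delicate step of concluding that the essential supremum of $\Phi(f)$ with respect to Haar equals $c$ rather than merely the pointwise supremum. I would handle this by invoking a point realization: the action being continuous on a compact model makes $g \mapsto D_{\pi}^{(g)}(y)$ weak-$*$ continuous, so any $g_{0}$ with $\Phi(f)(g_{0}) > c - \epsilon$ sits in an open neighborhood of positive Haar measure on which $\Phi(f) > c - 2\epsilon$. This forces $\|\Phi(f)\|_{L^{\infty}(G, \mathrm{Haar})} = c$ and completes the isometry argument.
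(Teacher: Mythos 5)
Your core argument is the same as the paper's: for the converse you apply the isometry to $\bbone_{B}$ and note that the pointwise supremum is squeezed between the essential supremum and $1$; for the forward direction you pass to the super-level set $\{f > c-\epsilon\}$ (replacing $f$ by $-f$ if needed), apply relative contractivity to that set, and estimate $D_{\pi}^{(g)}(y)(f)$ from below. The explicit use of mutual absolute continuity of the conjugated disintegrations to get the contraction bound $\lvert D_{\pi}^{(g)}(y)(f)\rvert \le \|f\|_{L^{\infty}(D_{\pi}(y))}$ is a detail the paper leaves implicit, and your arithmetic is fine.

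The problem is your final paragraph. The claim that on a compact model the map $g \mapsto D_{\pi}^{(g)}(y)$ is weak-$*$ continuous is false, and the paper says so explicitly in the subsection on dense subgroups: ``In general, the map $g \mapsto D_{\pi}^{(g)}(y)$ is not continuous,'' with induced actions from lattices as a counterexample (there $D_{p}^{(g)}(f) = \delta_{f}\times\alpha(g,f)\nu$ jumps as $g$ crosses translates of the fundamental domain, and no point realization can make it continuous). So the neighborhood-of-positive-Haar-measure step does not go through as written. To be fair, the subtlety you are trying to patch --- pointwise supremum versus Haar-essential supremum --- is one the paper's own proof does not address either (it simply identifies $\|\cdot\|_{L^{\infty}(G,\mathrm{Haar})}$ with $\sup_{g}$); but if you want to close it honestly, the right tool is not continuity of the disintegration. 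It is the kind of averaging argument used in the hard direction of Theorem \ref{T:contractiveexttopo}: convolve with an approximate identity $\psi_{n} \in C_{c}(G)$ and integrate the estimate over a positive-Haar-measure set of group elements, or invoke the almost-everywhere continuity of $g \mapsto D_{\pi}^{(g)}(y)$ that the paper asserts (for almost every $y$) rather than continuity everywhere. As it stands, the last step of your forward direction rests on a false premise and needs to be replaced.
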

\begin{proof}
Assume $\pi$ is relatively contractive.  Take $y$ in the measure one set where the disintegration measures are relatively contractive.  Let $f \in L^{\infty}(X,D_{\pi}(y))$ with $\| f \| = 1$.  Fix $\epsilon > 0$ and let $B$ be a measurable set such that $f(x) > 1 - \epsilon$ for $x \in B$ (replacing $f$ with $-f$ if necessary) and such that $D_{\pi}(y)(B) > 0$.  As $\pi$ is relatively contractive, there exists $g \in G$ such that $D_{\pi}^{(g)}(y)(B) > 1 - \epsilon$.  Then $D_{\pi}^{(g)}(f) > 1 - 2\epsilon$.  As $\epsilon$ was arbitrary, this shows that the map is an isometry on the norm one functions hence is an isometry as claimed.

Conversely, assume the map is an isometry for almost every $y$.  For such a $y$, let $B \subseteq \pi^{-1}(y)$ with $D_{\pi}(y)(B) > 0$ and then
$1 = \| \bbone_{B} \|_{\infty} = \sup_{g} D_{\pi}^{(g)}(y)(B)$
so $\pi$ relatively contractive.
\end{proof}

Note that $\pi$ is relatively measure-preserving if and only if the map that would be isometric for relatively contractive, $f \mapsto D_{\pi}^{(g)}(y)(f)$, is simply the map $f \mapsto D_{\pi}(y)(f)$ which is the projection to the ``constants" on each fiber.

We remark that in effect there is a zero-one law for relatively contractive extensions.  Namely, if $\pi : (X,\nu) \to (Y,\eta)$ is a $G$-map of ergodic $G$-spaces then the set of $y$ such that $D_{\pi}^{(g)}(y)$ induces an isometry $L^{\infty}(X,D_{\pi}(y)) \to L^{\infty}(G,\mathrm{Haar})$ has either measure zero or measure one.  This follows from the fact that the set of such $y$ must be $G$-invariant and hence follows by ergodicity: if $D_{\pi}^{(g)}(y)$ induces an isometry then for any $h \in G$ and $f \in L^{\infty}(X,\nu)$
\[
\sup_{g \in G} \big{|}D_{\pi}^{(g)}(hy)(f)\big{|} = \sup_{g \in G} \big{|}D_{\pi}^{(gh)}(y)(h \cdot f)\big{|}
= \sup_{g \in G} \big{|}D_{\pi}^{(g)}(y)(h \cdot f)\big{|} = \| h \cdot f \| = \| f \|.
\]

\subsection{Relatively Contractible Spaces}

Generalizing the idea of contractible spaces as point realizations of contractive actions, in \cite{CP14} the notion of relatively contractible (point) spaces was introduced and used to characterize relatively contractive maps.

\begin{definition}
Let $\pi : (X,\nu) \to (Y,\eta)$ be a $G$-map of $G$-spaces.  A point realization $\pi_{0} : (X_{0},\nu_{0}) \to (Y_{0},\eta_{0})$ for this map is \textbf{relatively contractible} when for $\eta_{0}$-almost every $y \in Y_{0}$ and every $x$ in the support of $D_{\pi}(y_{0})$ there exists a sequence $g_{n} \in G$ such that $D_{\pi_{0}}^{(g_{n})}(y) \to \delta_{x}$ in weak*.
\end{definition}

This definition gives rise to the intuitive view of relatively contractive maps as those where almost every fiber can be contracted to any point mass under the group action (though it must be kept in mind that the sequence which contracts one fiber to a point mass may not contract the other fibers, this is especially critical when $G$ is uncountable).

\begin{theorem}\label{T:contractiveexttopo}
Let $\pi : (X,\nu) \to (Y,\eta)$ be a $G$-map of $G$-spaces.  Then $\pi$ is relatively contractive if and only if  every continuous compact model of $\pi$ is relatively contractible.
\end{theorem}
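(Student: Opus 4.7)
The proof has two implications to treat.

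For the forward direction, the plan is to use a diagonal argument via the portmanteau theorem. Fixing a continuous compact model $\pi_0 : (X_0,\nu_0) \to (Y_0,\eta_0)$ and a $y_0 \in Y_0$ in the full-measure set where relative contractiveness applies to $\mu := D_{\pi_0}(y_0)$, given $x \in \mathrm{supp}(\mu)$, I would choose a countable decreasing open neighborhood basis $U_1 \supseteq U_2 \supseteq \cdots$ of $x$. Since $x \in \mathrm{supp}(\mu)$, one has $\mu(U_j) > 0$ for each $j$, so relative contractiveness produces $g_j \in G$ with $D_{\pi_0}^{(g_j)}(y_0)(U_j) > 1 - 1/j$. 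For $j \geq k$, the nesting $U_j \subseteq U_k$ gives $D_{\pi_0}^{(g_j)}(y_0)(U_k) > 1 - 1/j$, so $\liminf_j D_{\pi_0}^{(g_j)}(y_0)(V) = 1$ for every open $V \ni x$, and the portmanteau theorem then yields $D_{\pi_0}^{(g_j)}(y_0) \to \delta_x$ in weak*.

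For the reverse direction, the plan is to invoke the algebraic characterization of Proposition~\ref{P:contractiveexte}: it suffices to show that the map $f \mapsto (g \mapsto D_{\pi_0}^{(g)}(y_0)(f))$ is an isometry from $L^\infty(X_0,\mu)$ to $L^\infty(G,\mathrm{Haar})$ for $\eta_0$-a.e. $y_0$. On $C(X_0)$ this is direct: for any $x \in \mathrm{supp}(\mu)$ the contracting sequence gives $D_{\pi_0}^{(g_n)}(y_0)(f) \to f(x)$, hence $\sup_g |D_{\pi_0}^{(g)}(y_0)(f)| \geq \sup_{x \in \mathrm{supp}(\mu)} |f(x)| = \|f\|_{L^\infty(\mu)}$, with the reverse inequality trivial. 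Extending to arbitrary $f \in L^\infty(\mu)$ proceeds by approximating $f$ by continuous $h_n$ (with $\|h_n\|_\infty \leq \|f\|_\infty$) via Lusin's theorem and Tietze extension, and then using the mutual absolute continuity of $\mu$ and $\mu_g$ established in the preceding proposition: the Radon--Nikodym derivatives $\phi_g := d\mu_g/d\mu$ lie in $L^1(\mu)$, so for each fixed $g$ the dominated convergence theorem gives $D_{\pi_0}^{(g)}(y_0)(h_n) \to D_{\pi_0}^{(g)}(y_0)(f)$ whenever $h_n \to f$ in $\mu$-measure.

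The hard part will be executing this extension cleanly. Convergence $h_n \to f$ in $\mu$-measure delivers $\mu_g$-convergence only pointwise in $g$, not uniformly, and the derivatives $\phi_g$ are not uniformly bounded in $g$ (indeed they must concentrate near $x$ whenever $\mu_{g_n} \to \delta_x$). Recovering the $L^\infty$ isometry requires a diagonal argument: for each $n$, the isometry on $C(X_0)$ produces a $g_n$ with $|D_{\pi_0}^{(g_n)}(y_0)(h_n)|$ within $1/n$ of $\|h_n\|_{L^\infty(\mu)}$, and one must arrange the approximations $h_n$ so that $|D_{\pi_0}^{(g_n)}(y_0)(h_n - f)|$ also shrinks along the chosen sequence $\{g_n\}$. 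Controlling this error in the absence of uniform bounds on $\phi_g$ is the technical heart of the argument, and is where the specific structure of the conjugated disintegration measures supplied by the point realization has to be exploited.
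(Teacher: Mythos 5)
Your forward direction is correct and is essentially the paper's own argument: the paper takes a decreasing sequence of continuous functions $f_n \downarrow \bbone_{\{x\}}$, picks $g_n$ with $D_{\pi}^{(g_n)}(y)(f_n) > 1 - 1/n$, and extracts a weak* limit point that must be $\delta_x$ — the same diagonal scheme as your nested neighborhoods plus portmanteau. The isometry on $C(X_0)$ at the start of your reverse direction also matches the paper verbatim.

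The gap is exactly where you flag it, and it is not a technicality that can be deferred: the Lusin--Tietze approximation of $f \in L^\infty(X_0,\mu)$ by continuous $h_n$ cannot be repaired within a single fixed compact model. The set where $h_n \neq f$ has small $\mu$-measure, but the conjugated measures $D_{\pi_0}^{(g)}(y_0)$ realizing the supremum are precisely those that concentrate mass on small sets — that is what contractiveness means — so $|D_{\pi_0}^{(g_n)}(y_0)(h_n - f)|$ can remain of order $\|f\|_\infty$ along the optimizing sequence, and dominated convergence, being only pointwise in $g$, gives nothing. Note also that your plan never invokes the hypothesis that \emph{every} compact model is relatively contractible, and it cannot succeed without it: the isometry on $C(X_0)$ for one model does not determine $\sup_g D_{\pi_0}^{(g)}(y)(\bbone_A)$ for an arbitrary measurable $A$. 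The paper closes this by a structurally different mechanism: it argues by contradiction, fixing a measurable $A$, a $\delta>0$, and a positive-measure set $B \subseteq Y$ of bad fibers; it replaces $\bbone_A$ not by an approximation on $X$ but by the convolution $\bbone_A * \psi$ with an approximate identity $\psi \in C_c(G)$ supported near $e$, which is a $G$-continuous function and hence continuous in \emph{some} compact model (this is where the ``every model'' hypothesis enters); and it then controls the error by an estimate integrated over $y \in B_1 \subseteq B$ and over $h \in \mathrm{supp}\,\psi$, using the near-invariance $|\eta(B_1 \cap h^{-1}B_1) - \eta(B_1)| < \epsilon$ for $h$ close to $e$. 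That averaging over the base $Y$ is what substitutes for the uniform-in-$g$ control your fiberwise argument lacks; as written, your reverse direction is a correct reduction followed by an honest statement of the unsolved core, so the proof is incomplete.
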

\begin{proof}
Assume that $\pi$ is relatively contractive.  By Proposition \ref{P:contractiveexte}, there is a measure one set of $y$ such that $f \mapsto D_{\pi}^{(g)}(y)(f)$ is an isometry from $L^{\infty}(X, D_{\pi}(y))$ and $L^{\infty}(G,\mathrm{Haar})$. Fix $y$ in that set and let $x \in X$ be in the support of $D_{\pi}(y)$.  Choose $f_{n} \in C(X)$ such that $0 \leq f_{n} \leq 1$, $\| f_{n} \| = 1$ and $f_{n} \to \bbone_{\{x\}}$ pointwise (possible since $C(X)$ separates points) and such that $f_{n+1} \leq f_{n}$.  Since $\pi$ is relatively contractive (and $x$ is in the support of $D_{\pi}(y)$), $\sup_{g} D_{\pi}^{(g)}(y)(f_{n}) = 1$ for each $n$.  Choose $g_{n} \in G$ such that
\[
1 - \frac{1}{n} < D_{\pi}^{(g_{n})}(y)(f_{n})
\]
and observe then that, since $f_{n+1} \leq f_{n}$,
\[
1 - \frac{1}{n+1} < D_{\pi}^{(g_{n+1})}(y)(f_{n+1}) \leq D_{\pi}^{(g_{n+1})}(y)(f_{n})
\]
and therefore $\lim_{m\to\infty} D_{\pi}^{(g_{m})}(f_{n}) = 1$ for each fixed $n$.

Now $P(X)$ is compact so there exists a limit point $\zeta \in P(X)$ such that $\zeta = \lim_{j} D_{\pi}^{(g_{n_{j}})}(y)$ along some subsequence.  Now $\zeta(f_{n}) = 1$ for each $n$ by the above and $f_{n} \to \bbone_{\{x\}}$ is pointwise decreasing so by bounded convergence $\zeta(\{ x \}) = \lim \zeta(f_{n}) = 1$.  This means that for almost every $y$, the conclusion holds for all $x \in \pi^{-1}(y)$.

For the converse, first consider any continuous compact model such that for almost every $y \in Y$ and every $x$ in the support of $D_{\pi}(y)$, there exists a sequence $\{ g_{n} \}$ such that $D_{\pi}^{(g_{n})}(y) \to \delta_{x}$.  Let $f \in C(X)$.  Then the supremum of $f$ on $\mathrm{supp}~D_{\pi}(y)$ is attained at some $x \in \pi^{-1}(y)$ since $\mathrm{supp}~D_{\pi}(y)$ is a closed, hence compact, set.  Take $g_{n}$ such that $g_{n}^{-1} D_{\pi}(g_{n}y) \to \delta_{x}$.   Then $g_{n}^{-1}D_{\pi}(g_{n}y)(f) \to f(x) = \| f \|_{L^{\infty}(D_{\pi}(y))}$.  Hence for $f \in C(X)$ the map is an isometry.

Now assume that for every continuous compact model for $\pi$ and for almost every $y$ and every $x \in \mathrm{supp}~D_{\pi}(y)$ there is a sequence $g_{n} \in G$ such that $g_{n}^{-1}D_{\pi}(g_{n}y) \to \delta_{x}$.

Suppose that $\pi$ is not relatively contractive.  Then there exists a measurable set $A \subseteq X$ with $\nu(A) > 0$ and $1 > \delta > 0$ such that 
\[
B = \{ y \in Y : D_{\pi}(y)(A) > 0 \text{ and } \sup_{g} D_{\pi}^{(g)}(y)(A) \leq 1 - \delta \} > 0
\]
has $\eta(B) > 0$.

Fix $\epsilon > 0$.  Let $\psi_{n} \in C_{c}(G)$ be an approximate identity ($\psi_{n}$ are nonnegative continuous functions with $\int \psi_{n} dm = 1$ where $m$ is a Haar measure on $G$ such that the compact supports of the $\psi_{n}$ are a decreasing sequence and $\cap_{n} \mathrm{supp}~\psi_{n} = \{ e \}$; the reader is referred to \cite{FG10} Corollary 8.7).  Define $f_{n} = \bbone_{A} * \psi_{n} = \int_{G} \bbone_{A}(hx) \psi_{n}(h)~dm(h)$.  Then the $f_{n}$ are $G$-continuous functions by \cite{FG10} Lemma 8.6.

By Proposition \ref{P:fghelp} (below),
\[
\lim_{n} \| \bbone_{A} * \psi_{n} \|_{L^{\infty}(X,D_{\pi}(y))} = 1
\]
for all $y \in B$.

There then exists a set $B_{1} \subseteq B$ with $\eta(B_{1}) > \eta(B) - \epsilon$ and $N \in \mathbb{N}$ such that for all $y \in B_{1}$ and all $n \geq N$, $\| \bbone_{A} * \psi_{n} \|_{L^{\infty}(X,D_{\pi}(y))} > 1 - \epsilon$.
Let $V$ be a compact set neighborhood of the identity in $G$ such that $| \eta(B_{1} \cap h^{-1}B_{1}) - \eta(B_{1})| < \epsilon$ for all $h \in V$ (possible as the $G$-action is continuous on the algebra of measurable sets).  Choose $n \geq N$ such that the support of $\psi = \psi_{n}$ is contained in $V$.

Set $f = \bbone_{A} * \psi$.
Since $f$ is $G$-continuous there exists a continuous compact model on which $f \in C(X)$ by \cite{FG10} Theorem 8.5.  Hence, for almost every $y \in Y$,
\[
\sup_{g} D_{\pi}^{(g)}(y)(f) = \| f \|_{L^{\infty}(X,D_{\pi}(y))}.
\]
Removing a null set from $B_{1}$, then for all $y \in B_{1}$ there exists $g_{y} \in G$ such that
\[
D_{\pi}^{(g_{y})}(y)(f) > \| f \|_{L^{\infty}(X,D_{\pi}(y))} - \epsilon > 1 - 2\epsilon.
\]
Observe that
{\allowdisplaybreaks
\begin{align*}
(1 - 2\epsilon)\eta(B_{1}) &\leq \int_{B_{1}} D_{\pi}^{(g_{y})}(f)~d\eta(y) \\
&= \int_{B_{1}} \int_{X} f(g_{y}^{-1}x)~dD_{\pi}(g_{y}y)(x)~d\eta(y) \\
&= \int_{B_{1}} \int_{X} \int_{G} \bbone_{A}(hg_{y}^{-1}x) \psi(h)~dm(h)~dD_{\pi}(g_{y}y)~d\eta(y) \\
&= \int_{G} \int_{B_{1}} D_{\pi}(g_{y}y)(g_{y}h^{-1}A)~d\eta(y) \psi(h)~dm(h) \\
&= \int_{G} \int_{hB_{1}} D_{\pi}(g_{y}h^{-1}y)(g_{y}h^{-1}A)~dh\eta(y) \psi(h)~dm(h) \\
&= \int_{G} \int_{hB_{1}} D_{\pi}^{(g_{y}h^{-1})}(y)(A)~dh\eta(y) \psi(h)~dm(h) \\
&\leq \int_{G} \int_{hB_{1}} \sup_{g} D_{\pi}^{(g)}(y)(A)~dh\eta(y) \psi(h)~dm(h) \\
&= \int_{G} \Big{(} \int_{hB_{1} \setminus B_{1}} \sup_{g} D_{\pi}^{(g)}(y)(A)~dh\eta(y) \\
&\quad\quad + \int_{hB_{1} \cap B_{1}} \sup_{g} D_{\pi}^{(g)}(y)(A)~dh\eta(y) \Big{)} \psi(h)~dm(h) \\
&\leq \int_{G} \big{(} h\eta(hB_{1} \setminus B_{1}) + (1 - \delta)h\eta(hB_{1} \cap B_{1}) \big{)} \psi(h)~dm(h) \\
&= \int_{G} \big{(} h\eta(hB_{1}) - \delta h\eta(hB_{1} \cap B_{1}) \big{)} \psi(h)~dm(h) \\
&= \eta(B_{1}) - \delta \int_{G} \eta(B_{1} \cap h^{-1}B_{1}) \psi(h)~dm(h).
\end{align*}}
Now the support of $\psi$ is contained in $V$ and $| \eta(B_{1} \cap h^{-1}B_{1}) - \eta(B_{1})| < \epsilon$ for all $h \in V$.  Therefore
\[
-2\epsilon\eta(B_{1}) \leq - \delta \int_{G} (\eta(B_{1}) - \epsilon) \psi(h)~dm(h) = - \delta\eta(B_{1}) + \delta\epsilon.
\]
Hence
\[
\delta \eta(B_{1}) \leq \epsilon(2\eta(B_{1}) + \delta).
\]
Then
\[
\delta \eta(B) \leq \delta (\eta(B_{1}) + \epsilon) \leq 2\epsilon(\eta(B_{1}) + \delta) \leq 2\epsilon(\eta(B) + \delta).
\]
Since $\delta$ is fixed and this holds for all $\epsilon > 0$, $\eta(B) = 0$ contradicting that $\pi$ is not relatively contractive.
\end{proof}

We remark briefly on how to construct the approximate identity used in the proof.  Let $F : [0,\infty) \to [0,1]$ be a continuous monotone decreasing function such that $F(0) = 1$ and $F(t) = 0$ for $t \geq 1$.  Such functions are easily constructed.  For each $n \in \mathbb{N}$ define $F_{n}(t) = F(nt)$.  Then $F_{n} : [0,\infty) \to [0,1]$ is continuous and $F_{n}(0) = 1$ and $F_{n}(t) = 0$ for $t \geq \frac{1}{n}$.  Therefore $F_{n}(t) \to 0$ for $t > 0$ and $F_{n}(0) \to 1$.  Also, since $F$ is decreasing, $F_{n+1}(t) = F((n+1)t) \leq F(nt) = F_{n}(t)$.  Now returning to our compact metric space $(X,d)$, fix $x_{0} \in X$ and define $f_{n}(x) = F_{n}(d(x,x_{0}))$.  Then $f_{n} \in C(X)$ since $d(\cdot,x_{0}) \in C(X)$ and $F_{n} \in C([0,\infty))$.  Clearly $f_{n+1}(x) \leq f_{n}(x)$ and $f_{n}(x) \to 0$ for $x \ne x_{0}$ and $f_{n}(x_{0}) \to 1$.  This sequence $\{ f_{n} \}$ is then the approximate identity used in the proof.

The following fact was used in the above proof and a detailed proof can be found in \cite{CP14} so we opt to omit it here.
\begin{proposition}\label{P:fghelp}
Let $\pi : (X,\nu) \to (Y,\eta)$ be a $G$-map of $G$-spaces.  Let $\psi_{n} \in C_{c}(G)$ be an approximate identity (the $\psi_{n}$ are nonnegative continuous functions with decreasing compact supports $V_{n}$ such that $\cap V_{n} = \{ e \}$ and $\int \psi_{n} dm = 1$ for $m$ a Haar measure on $G$).  Then for any measurable set $A \subseteq X$ and almost every $y \in Y$ such that $D_{\pi}(y)(A) > 0$,
\[
\lim_{n} \| \bbone_{A} * \psi_{n} \|_{L^{\infty}(X,D_{\pi}(y))} = 1.
\]
\end{proposition}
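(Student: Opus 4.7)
The plan is to show the essential supremum of $f_n := \bbone_A * \psi_n$ along the fiber over $y$ tends to $1$, by first establishing $L^1(\nu)$-convergence of $f_n$ to $\bbone_A$, then upgrading to $\nu$-almost everywhere pointwise convergence, and finally combining disintegration with Egorov's theorem. Throughout, $0 \le f_n \le 1$, so the upper bound $\|f_n\|_{L^\infty(D_\pi(y))} \le 1$ is automatic; all the work is in the lower bound.

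For $L^1(\nu)$-convergence, I would use Fubini to write
\[
\int_X f_n \, d\nu = \int_G \nu(h^{-1}A) \psi_n(h) \, dm(h), \qquad \int_A f_n \, d\nu = \int_G \nu(A \cap h^{-1}A) \psi_n(h) \, dm(h).
\]
By quasi-invariance of $\nu$ and the standard continuity of the action of a LCSC group on the measure algebra of $(X,\nu)$, both integrands are continuous in $h$ at $e$ with common value $\nu(A)$. Since $\psi_n$ is an approximate identity concentrating at $e$, both integrals converge to $\nu(A)$, and hence $\int |f_n - \bbone_A| \, d\nu = \int_A (1 - f_n) \, d\nu + \int_{A^c} f_n \, d\nu \to 0$.

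The second step---upgrading $L^1$-convergence to $\nu$-almost everywhere pointwise convergence $f_n(x) \to \bbone_A(x)$---is the main obstacle. $L^1$-convergence alone only gives a subsequence converging a.e., which would suffice for a $\limsup$ statement but not for the $\lim$ asserted in the proposition (nor for the Egorov application made in the proof of Theorem \ref{T:contractiveexttopo}). The right tool is a Hardy--Littlewood style weak-type $(1,1)$ bound for the maximal operator $Mf(x) = \sup_n (|f| * \psi_n)(x)$ on $(X,\nu)$: combined with uniform convergence $f * \psi_n \to f$ for $f \in C(X)$ in a point realization (which follows from continuity of the $G$-action in the compact model) and the $L^1$-density of continuous functions, such a maximal inequality yields the desired pointwise convergence via the standard density argument. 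Constructing this inequality for a general quasi-invariant LCSC action is the delicate part, carried out in detail in \cite{CP14}.

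The final step is routine: by the disintegration formula, $\nu$-almost everywhere pointwise convergence descends to $D_\pi(y)$-almost everywhere convergence on the fiber $\pi^{-1}(y)$ for $\eta$-almost every $y$. Fix such a $y$ with $D_\pi(y)(A) > 0$. Then $A \cap \pi^{-1}(y)$ has positive $D_\pi(y)$-measure and $f_n \to 1$ pointwise on this set, so Egorov's theorem produces a positive-$D_\pi(y)$-measure subset on which the convergence is uniform. For any $\epsilon > 0$ this forces $\|f_n\|_{L^\infty(D_\pi(y))} \ge 1 - \epsilon$ for all sufficiently large $n$, which together with the trivial upper bound $\|f_n\|_{L^\infty(D_\pi(y))} \le 1$ yields $\lim_n \|f_n\|_{L^\infty(D_\pi(y))} = 1$, as required.
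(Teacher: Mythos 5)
The paper does not actually prove this proposition --- it defers the argument to \cite{CP14} --- so there is no in-text proof to compare against; I can only assess your outline on its merits. Your first step ($L^{1}(\nu)$-convergence of $f_{n} = \bbone_{A} * \psi_{n}$ to $\bbone_{A}$ via Fubini and norm-continuity of $h \mapsto \nu(h^{-1}A)$) is correct, and your final step (disintegration plus Egorov, or more simply the observation that pointwise convergence on a positive-measure subset of a fiber already forces the essential supremum up) is routine. The problem is that the entire content of the proposition sits in your second step, and there you do not give an argument: you assert that a weak-type $(1,1)$ maximal inequality for $\sup_{n} |f|*\psi_{n}$ exists for a general quasi-invariant action of a general locally compact second countable group and a general approximate identity, and attribute its construction to \cite{CP14}. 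That is not a proof, and worse, the intermediate statement you are aiming at --- $\bbone_{A}*\psi_{n} \to \bbone_{A}$ pointwise $\nu$-a.e.\ for \emph{every} approximate identity whose compact supports merely decrease to $\{e\}$ --- is false. Already for $G = \mathbb{R}$ acting by translation on the circle, one may take $V_{n} = [0,1/n]$ with the mass of $\psi_{n}$ concentrated on an interval $[a_{n}, a_{n}+b_{n}]$ with $b_{n}/a_{n} \to 0$ rapidly; the resulting moving averages fail the Nagel--Stein/Bellow--Jones--Rosenblatt cone condition and diverge a.e.\ for suitable indicator functions. Consequently no maximal inequality of the type you invoke can hold in this generality, and the gap cannot be closed along the route you propose. (Note also that via the example $X = Y \times Z$ with $G$ acting trivially on $Z$ and $A = E \times Z$, the fiberwise conclusion of the proposition reduces \emph{exactly} to this pointwise differentiation statement, so the difficulty is intrinsic and not an artifact of your method.)

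What can be extracted elementarily, and what the application in Theorem \ref{T:contractiveexttopo} actually requires, is weaker. From $\| f_{n} \|_{L^{\infty}(X,D_{\pi}(y))} \geq 1 - D_{\pi}(y)(A)^{-1}\int_{A}(1-f_{n})\,dD_{\pi}(y)$ together with $\int_{Y}\int_{A}(1-f_{n})\,dD_{\pi}(y)\,d\eta(y) = \int_{A}(1-f_{n})\,d\nu \to 0$, Fatou's lemma gives $\limsup_{n}\| f_{n}\|_{L^{\infty}(X,D_{\pi}(y))} = 1$ for a.e.\ relevant $y$, and Markov's inequality gives convergence to $1$ \emph{in $\eta$-measure} of $y \mapsto \| f_{n}\|_{L^{\infty}(X,D_{\pi}(y))}$ on $\{ y : D_{\pi}(y)(A) \geq c\}$; the latter is enough to produce the set $B_{1}$ and the single index $n$ used in the proof of Theorem \ref{T:contractiveexttopo}, with no Egorov step needed. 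If you want the full limit as literally stated, you must impose (as one is free to do in the application, where the approximate identity is chosen) additional regularity on $\{\psi_{n}\}$ --- e.g.\ a centered, doubling/Besicovitch-type family for which a differentiation theorem along orbits is available --- and you should say so explicitly rather than appeal to a maximal inequality that does not exist at the stated level of generality.
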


\subsection{Relatively Contractive Maps and Dense Subgroups}

In general, the map $g \mapsto D_{\pi}^{(g)}(y)$ is not continuous (however, it can be shown to be continuous almost everywhere for almost every $y$) which can be seen by considering an induced action from a lattice to a locally compact second countable group (for such an action, there cannot exist point realizations making the map continuous, a fact left to the reader).  This fact accounts for the difficulty in the proof of the following statement.

\begin{theorem}\label{T:relcontdense}
Let $\pi : (X,\nu) \to (Y,\eta)$ be a relatively contractive $G$-map of $G$-spaces.  Let $G_{0}$ be a countable dense subgroup of $G$.  Then $\pi$ is a relatively contractive $G_{0}$-map.
\end{theorem}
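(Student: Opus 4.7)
My plan is to start by invoking the topological characterization (Theorem \ref{T:contractiveexttopo}). Fix a continuous compact $G$-model $\pi_0 : (X_0, \nu_0) \to (Y_0, \eta_0)$ for $\pi$; since $G_0 \subset G$ acts continuously as a subgroup, this is simultaneously a continuous compact $G_0$-model. The $G$-hypothesis together with Theorem \ref{T:contractiveexttopo} gives that for $\eta_0$-almost every $y \in Y_0$ and every $x \in \mathrm{supp}\, D_{\pi_0}(y)$ there exists a sequence $g_n \in G$ with $g_n^{-1} D_{\pi_0}(g_n y) \to \delta_x$ in weak*. The target is to produce, in place of each such $G$-sequence, one contained in the dense subgroup $G_0$; relatively contractive for $G_0$ will then follow by applying Proposition \ref{P:contractiveexte} directly to the $G_0$-action.

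The obstacle, as explicitly flagged just before the theorem, is that $g \mapsto D_{\pi_0}^{(g)}(y) = g^{-1} D_{\pi_0}(gy)$ is only measurable in $g$ (because $y' \mapsto D_{\pi_0}(y')$ is merely measurable), so one cannot naively replace $g_n$ by a nearby element of $G_0$. My way around this is to test against continuous functions: for $f \in C(X_0)$, $D_{\pi_0}^{(g)}(y)(f) = \int f(g^{-1}x)\, dD_{\pi_0}(gy)(x)$. The map $g \mapsto (x \mapsto f(g^{-1}x))$ is norm-continuous in $C(X_0)$ by strong continuity of the $G$-action, while for any fixed bounded measurable $\varphi$ on $Y_0$ the translation $g \mapsto (y \mapsto \varphi(gy))$ is continuous into $L^1(Y_0, \eta_0)$ by strong continuity of the quasi-regular representation for a continuous quasi-invariant action. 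Combining, $g \mapsto D_{\pi_0}^{(g)}(\cdot)(f)$ is continuous from $G$ into $L^1(Y_0, \eta_0)$, so along some subsequence $D_{\pi_0}^{(h_n)}(y)(f) \to D_{\pi_0}^{(g)}(y)(f)$ for $\eta_0$-a.e. $y$ whenever $h_n \to g$ in $G$.

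Now pick a countable dense subset $\{f_k\} \subset C(X_0)$ and, for each element of a countable dense subset of $G$, a sequence from $G_0$ converging to it; after unioning the countably many resulting null sets, the surviving conull set of $y$'s satisfies $\sup_{h \in G_0} D_{\pi_0}^{(h)}(y)(f_k) \geq \sup_{g \in G} D_{\pi_0}^{(g)}(y)(f_k)$ for every $k$. By Proposition \ref{P:contractiveexte} applied to the $G$-action, both suprema equal $\|f_k\|_{L^\infty(X_0, D_{\pi_0}(y))}$, and uniform density of $\{f_k\}$ in $C(X_0)$ extends this isometry to all of $C(X_0)$. For an arbitrary measurable $B \subset X$ with $D_{\pi_0}(y)(B) > 0$, invoke Proposition \ref{P:fghelp}: $\mathbf{1}_B * \psi_n$ is continuous on an appropriate model and $\|\mathbf{1}_B * \psi_n\|_{L^\infty(D_{\pi_0}(y))} \to 1$, allowing one to transfer the $G_0$-supremum equality from continuous test functions back to the indicator $\mathbf{1}_B$ and conclude $\sup_{h \in G_0} D_{\pi_0}^{(h)}(y)(B) = 1$, which by the algebraic characterization is the desired relatively $G_0$-contractive property.

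I expect the main obstacle to be the quantifier-exchange implicit in the third paragraph: turning "for each $g \in G$, $\eta_0$-a.e. $y$ admits a $G_0$-subsequence realizing $D_{\pi_0}^{(g)}(y)(f_k)$ in the limit" into the stronger "for $\eta_0$-a.e. $y$, the full $G$-supremum is realized along $G_0$". A naive union of null sets over $g \in G$ is not available; one must instead select, at the level of measurable maps $y \mapsto g_y \in G$ realizing the $G$-supremum up to $\epsilon$, an approximation by $G_0$-valued measurable maps using density and Lusin's theorem, and then exploit the $L^1$-continuity from the second paragraph in a Fubini-style argument — essentially the technical core carried out in the Creutz--Peterson reference cited earlier in the paper.
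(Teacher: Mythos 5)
Your main line of argument (the third paragraph) has a genuine gap, and it is exactly the one you flag at the end. The $L^{1}$-continuity of $g \mapsto D_{\pi_{0}}^{(g)}(\cdot)(f)$ gives, for each \emph{fixed} $g$ and each sequence $h_{n} \to g$ in $G_{0}$, a subsequence along which $D_{\pi_{0}}^{(h_{n_{j}})}(y)(f) \to D_{\pi_{0}}^{(g)}(y)(f)$ for a.e.\ $y$, with the null set depending on $g$. Unioning over a countable dense $G_{1} \subseteq G$ therefore yields only $\sup_{h \in G_{0}} D^{(h)}(y)(f_{k}) \geq \sup_{g \in G_{1}} D^{(g)}(y)(f_{k})$ a.e.; the further step $\sup_{G_{1}} = \sup_{G}$ pointwise a.e.\ does not follow, precisely because for fixed $y$ the map $g \mapsto D_{\pi}^{(g)}(y)(f)$ is not continuous (the paper warns of this just before the theorem). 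Indeed, a jointly measurable family $\Phi_{g}(y)$ that is continuous in $g$ into $L^{1}(\eta)$ can satisfy $\sup_{g \in G}\Phi_{g}(y) = 1$ everywhere while $\sup_{g \in G_{1}}\Phi_{g}(y) = 0$ a.e.\ (e.g.\ $\Phi_{g}(y) = \bbone_{\{y = g \bmod 1\}}$ on $[0,1]$), so norm-continuity into $L^{1}$ is simply not enough to pass the supremum to a countable dense set of group elements.

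Your self-diagnosis and proposed repair are the right ones, and they coincide with the paper's actual proof: one assumes $G_{0}$-contractivity fails on a positive-measure set $A$ with a fixed continuous witness $f$ and defect $\delta$, uses the von Neumann selection theorem to choose a Borel map $y \mapsto g_{y}$ with $D_{\pi}^{(g_{y})}(y)(f) \geq \|f\|_{L^{\infty}(D_{\pi}(y))} - \epsilon$, applies Lusin's theorem to make $y \mapsto D_{\pi}(g_{y}y)$ agree with a continuous $F$ on a set $D$ of measure $> 1-\epsilon$, and then approximates $g_{y}$ by $g_{n} \in G_{0}$: on the sets $E_{n} = D \cap \{y \in A : g_{y}^{-1}g_{n}y \in D\}$, whose measures tend to $\eta(D \cap A) > 0$, one gets $|F(y)(g_{y}\cdot f) - F(g_{y}^{-1}g_{n}y)(g_{n}\cdot f)| \geq \delta - \epsilon$ while the same quantity tends to $0$ by continuity of $F$ and of the $G$-action on $C(X)$ --- a contradiction. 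But in your write-up this step is deferred to the reference, and it is the entire content of the theorem; everything before it (compact model, reduction to continuous test functions, passage back to indicators via Proposition \ref{P:fghelp}) is routine packaging. As submitted, the proof is therefore incomplete: the quantifier exchange you correctly identify as the crux must actually be carried out, not cited.
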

\begin{proof}
Suppose that $\pi$ is not $G_{0}$-relatively contractive.  By the proof of Theorem \ref{T:contractiveexttopo}, there then exists a continuous compact model for $\pi : X \to Y$, a positive measure set $A \subseteq Y$, a nonnegative continuous function $f \in C(X)$ and $\delta > 0$ such that for all $y \in A$,
\[
\sup_{g_{0} \in G_{0}} D_{\pi}^{(g_{0})}(y)(f) \leq \| f \|_{L^{\infty}(X,D_{\pi}(y))} - \delta.
\]

Let $\epsilon > 0$ such that $\eta(A) > \epsilon$.  Since $\pi$ is $G$-relatively contractive, there is a conull Borel set $Y_{00}$ such that for every $y \in Y_{00}$, $\sup_{g} D_{\pi}^{(g)}(y)(f) = \| f \|_{L^{\infty}(X,D_{\pi}(y))}$.

Consider the set
\[
E = \{ (g,y) \in G \times Y_{00} : D_{\pi}^{(g)}(y)(f) \geq \| f \|_{L^{\infty}(X,D_{\pi}(y))} - \epsilon \}.
\]
Since $D_{\pi}$ is a Borel map, this is a Borel set.  By the von Neumann Selection Theorem \cite{vonneumann49} there then exists a conull Borel set $Y_{0} \subseteq Y_{00}$ such that the map $\mathrm{pr}_{Y} : E \to Y_{00}$ admits a Borel section on $Y_{0}$.
Choose a Borel section $g_{y} \in G$ for $y \in Y_{0}$ such that $D_{\pi}^{(g_{y})}(y) \geq \| f \|_{L^{\infty}(X,D_{\pi}(y))} - \epsilon$.

Consider the Borel function $Y \to P(X)$ given by $y \mapsto D_{\pi}(g_{y}y)$.  By Lusin's Theorem, there exists a measurable set $D \subseteq Y$ with $\eta(D) > 1 - \epsilon$ and a continuous map $F : Y \to P(X)$ such that $F(y) = D_{\pi}(g_{y}y)$ for $y \in D$.

For $y \in Y_{0}$, choose $\{ g_{n} \}$ in $G_{0}$ such that $g_{n} \to g_{y}$.
Then $\| g_{y} \cdot f - g_{n} \cdot f \|_{\infty} \to 0$ since $G$ acts continuously on $C(X)$ and $F(g_{y}^{-1}g_{n}y) \to F(y)$ in weak* hence $F(g_{y}^{-1}g_{n}y)(g_{y} \cdot f) \to F(y)(g_{y} \cdot y)$.  Therefore
\begin{align*}
\big{|} F&(y)(g_{y} \cdot f) - F(g_{y}^{-1}g_{n}y)(g_{n} \cdot f) \big{|} \\
&\leq \big{|} F(y)(g_{y} \cdot f) - F(g_{y}^{-1}g_{n}y)(g_{y} \cdot f) \big{|}
+ \big{|} F(g_{y}^{-1}g_{n}y)(g_{y} \cdot f) - F(g_{y}^{-1}g_{n}y)(g_{n} \cdot f) \big{|} \\
&\leq \big{|} F(y)(g_{y} \cdot f) - F(g_{y}^{-1}g_{n}y)(g_{y} \cdot f) \big{|}
+ \int_{X} \big{|} f(g_{y}^{-1}x) - f(g_{n}^{-1}x) \big{|}~dF(g_{y}^{-1}g_{n}y)(x) \\
&\leq \big{|} F(y)(g_{y} \cdot f) - F(g_{y}^{-1}g_{n}y)(g_{y} \cdot f) \big{|}
+ \| g_{y} \cdot f - g_{n} \cdot f \|_{\infty} \to 0.
\end{align*}

Observe that for $y \in D$,
\[
F(y)(g_{y} \cdot f) = D_{\pi}(g_{y}y)(g_{y} \cdot f) = D_{\pi}^{(g_{y})}(y)(f) > \| f \|_{L^{\infty}(X,D_{\pi}(y))} - \epsilon.
\]

Consider the set $D_{n}^{\prime} = \{ y \in A : g_{y}^{-1}g_{n}y \in D \}$.
Then for $y \in D_{n}^{\prime}$,
\[
F(g_{y}^{-1}g_{n}y)(g_{n} \cdot f) = D_{\pi}(g_{y}g_{y}^{-1}g_{n}y)(g_{n} \cdot f) = D_{\pi}^{(g_{n})}(y)(f) \leq \| f \|_{L^{\infty}(X,D_{\pi}(y))} - \delta.
\]
Consider the sets $E_{n} = D \cap D_{n}^{\prime}$.  Since $g_{n} \to g_{y}$, $\eta(E_{n}) \to \eta(D \cap A) > 0$.  For $y \in E_{n}$,
\[
\big{|} F(y)(g_{y} \cdot f) - F(g_{y}^{-1}g_{n}y)(g_{n} \cdot f) \big{|} \geq \delta - \epsilon.
\]
But $\big{|} F(y)(g_{y} \cdot f) - F(g_{y}^{-1}g_{n}y)(g_{n} \cdot f) \big{|} \to 0$ as $n \to \infty$ for every $y$.  This contradiction means that $\pi$ is relatively contractive for $G_{0}$.
\end{proof}

\subsection{Examples of Relatively Contractive Maps}

Let $(X,\nu)$ and $(Y,\eta)$ be contractive $G$-spaces.  In general it need not hold that $(X \times Y, \nu \times \eta)$ is contractive (with the diagonal $G$-action), however:
\begin{example}\label{E:examplecont}
Let $(X,\nu)$ be a contractive $G$-space and $(Y,\eta)$ be a $G$-space.  The map $\mathrm{pr_{Y}} : (X \times Y, \nu \times \eta) \to (Y,\eta)$ is relatively contractive ($X\times Y$ has the diagonal $G$-action).
\end{example}
\begin{proof}
The disintegration measures $D_{\pi}(y)$ are supported on $X \times \delta_{y}$ and have the form $D_{\pi}(y) = \nu \times \delta_{y}$.  Clearly
\[
D_{\pi}^{(g)}(y) = g^{-1} (\nu \times \delta_{gy}) = g^{-1}\nu \times \delta_{y}
\]
and since $(X,\nu)$ is contractive then $\pi$ is relatively contractive.
\end{proof}

More generally, the following holds:
\begin{example}
Let $\pi : (X,\nu) \to (Y,\eta)$ be a relatively contractive $G$-map of $G$-spaces.  Let $(Z,\zeta)$ be a $G$-space.  The map $\pi \times \mathrm{id} : (X \times Z, \nu \times \zeta) \to (Y \times Z, \eta \times \zeta)$ is relatively contractive (where $X\times Z$ and $Y\times Z$ have the diagonal $G$-action).
\end{example}
\begin{proof}
Since the disintegration of the identity is point masses, for almost every $(y,z) \in Y \times X$, it holds that
$D_{\pi \times \mathrm{id}}^{(g)}(y,z) = D_{\pi}^{(g)}(y) \times \delta_{z}$.
Then $\pi$ being relatively contractive implies $\pi \times \mathrm{id}$ is relatively contractive.
\end{proof}

Inducing contractive actions also gives rise to relatively contractive maps:
\begin{theorem}
Let $\Gamma < G$ be a lattice in a locally compact second countable group.  Let $(X,\nu)$ be a contractive $\Gamma$-space and $p : G \times_{\Gamma} X \to G / \Gamma$ be the $G$-map that is the natural projection from the induced $G$-space over $(X,\nu)$ to $G / \Gamma$.  Then $p$ is a relatively contractive $G$-map.
\end{theorem}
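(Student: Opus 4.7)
The plan is to work in the $(F\times X, m\times\nu)$ realization of the induced $G$-space, where $F\subset G$ is a fundamental domain containing $e$ with Haar-derived measure $m$, and the $G$-action is $g\cdot(f,x) = (gf\alpha(g,f),\alpha(g,f)^{-1}x)$. Under the canonical isomorphism $\tau$, the projection $p: G\times_\Gamma X\to G/\Gamma$ becomes the projection $\mathrm{pr}_F: F\times X\to F$, so it suffices to show $\mathrm{pr}_F$ is relatively contractive.

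First I would compute the disintegration. Since the fibre over $f$ is $\{f\}\times X$ and $(m\times\nu)|_{\{f\}\times X} = \delta_f\times\nu$, uniqueness of disintegration gives $D_{\mathrm{pr}_F}(f) = \delta_f\times\nu$. Next I would compute the conjugated disintegration $D_{\mathrm{pr}_F}^{(g)}(f) = g^{-1}D_{\mathrm{pr}_F}(g\cdot f)$ where the base action is $g\cdot f = gf\alpha(g,f)$. Applying $g^{-1}$ to $(gf\alpha(g,f), x)$ requires the value $\alpha(g^{-1}, gf\alpha(g,f))$, which equals $\alpha(g,f)^{-1}$ by the cocycle identity $\alpha(g^{-1}g,f) = \alpha(g,f)\alpha(g^{-1}, gf\alpha(g,f))$ together with $\alpha(e,f)=e$. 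This yields
\[
D_{\mathrm{pr}_F}^{(g)}(f) \;=\; \delta_f\times\bigl(\alpha(g,f)\nu\bigr).
\]

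The key observation is that for each fixed $f\in F$, the map $g\mapsto \alpha(g,f)$ is surjective onto $\Gamma$: given $\gamma\in\Gamma$, the element $g = f\gamma^{-1}f^{-1}\in G$ satisfies $gf\gamma = f\in F$, so $\alpha(g,f)=\gamma$. Consequently $\{D_{\mathrm{pr}_F}^{(g)}(f) : g\in G\}$ contains $\{\delta_f\times\gamma\nu : \gamma\in\Gamma\}$.

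To conclude, take a measurable $B\subseteq F\times X$ with $D_{\mathrm{pr}_F}(f)(B)>0$ and let $B_f = \{x\in X: (f,x)\in B\}$, so that $\nu(B_f)>0$. Since $\Gamma\curvearrowright(X,\nu)$ is contractive, $\sup_{\gamma\in\Gamma}\gamma\nu(B_f) = \sup_{\gamma}\nu(\gamma^{-1}B_f) = 1$, and therefore
\[
\sup_{g\in G} D_{\mathrm{pr}_F}^{(g)}(f)(B) \;\geq\; \sup_{\gamma\in\Gamma}\gamma\nu(B_f) \;=\; 1,
\]
proving relative contractiveness. The only delicate point is keeping the cocycle bookkeeping straight when transferring back via $\rho$ to $G\times_\Gamma X$; I expect no real obstacle, since the computation reduces entirely to the surjectivity $\alpha(\cdot,f)\colon G\twoheadrightarrow\Gamma$ combined with the hypothesis that $\nu$ is contractive under $\Gamma$.
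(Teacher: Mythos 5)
Your proposal is correct and follows essentially the same route as the paper: pass to the $(F\times X, m\times\nu)$ model, compute $D_{p}^{(g)}(f)=\delta_f\times\alpha(g,f)\nu$, and realize every $\gamma\in\Gamma$ as $\alpha(g,f)$ via $g=f\gamma^{-1}f^{-1}$ (the paper uses $g_n=\gamma_n^{-1}f_0^{-1}$, the same trick). The only cosmetic difference is that you verify the set-based definition of relative contractiveness directly, whereas the paper contracts the fiber measures to point masses and invokes the point-realization characterization; both finishes are valid.
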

\begin{proof}
Treat $G \times_{\Gamma} X$ as $(F \times X, m \times \nu)$ for $F$ a fundamental domain for $G / \Gamma$ with cocycle $\alpha : G \times F \to \Gamma$.  Consider $p : F \times X \to F$ the projection.  The disintegration $D_{p}(f)$ of $m \times \nu$ over $m$ is of the form $D_{p}(f) = \delta_{f} \times \nu$.   For $g \in G$,
\[
D_{p}^{(g)}(f) = g^{-1}D_{p}(gf\alpha(g,f)) = g^{-1}(\delta_{gf\alpha(g,f)} \times \nu) = \delta_{f} \times \alpha(g,f)\nu.
\]
Fix $(f_{0},x_{0}) \in F \times X$ and choose $\gamma_{n} \in \Gamma$ such that $\gamma_{n}\nu \to \delta_{x_{0}}$.  Set $g_{n} = \gamma_{n}^{-1}f_{0}^{-1}$.  Then $\alpha(g_{n},f_{0}) = \gamma_{n}$ so
$D_{p}^{(g_{n})}(f_{0}) = \delta_{f_{0}} \times \gamma_{n}\nu \to \delta_{f_{0}} \times \delta_{x_{0}}$
meaning $p$ is relatively contractive.
\end{proof}

\subsection{Factorization of Contractive Maps}

We now present the first step in the uniqueness theorem for relatively contractive maps which describes the possible intermediate quotient maps.
\begin{theorem}\label{T:contractivecomp}
Let $\pi : (X,\nu) \to (Y,\eta)$ and $\varphi : (Y,\eta) \to (Z,\rho)$ be $G$-maps of $G$-spaces.  If $\varphi \circ \pi$ is relatively contractive then both $\varphi$ and $\pi$ are relatively contractive.
\end{theorem}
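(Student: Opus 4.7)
The plan is to handle the two halves separately. In each case the strategy is to witness a failure of relative contractiveness of $\varphi\circ\pi$ by constructing an appropriate test set inside $X$ that carries the information about the map under consideration.

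For $\varphi$, the argument is immediate. Given any measurable $C\subseteq Y$ with $D_\varphi(z)(C)>0$, I would set $\widetilde{B}=\pi^{-1}(C)\subseteq X$. From $D_{\varphi\circ\pi}(z)=\int D_\pi(y)\,dD_\varphi(z)(y)$ one reads off $D_{\varphi\circ\pi}(z)(\widetilde{B})=D_\varphi(z)(C)>0$, and $G$-equivariance of $\pi$ gives $\pi_{*}D_{\varphi\circ\pi}^{(g)}(z)=D_\varphi^{(g)}(z)$, so that $D_{\varphi\circ\pi}^{(g)}(z)(\widetilde{B})=D_\varphi^{(g)}(z)(C)$. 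Relative contractiveness of $\varphi\circ\pi$ then yields $\sup_g D_\varphi^{(g)}(z)(C)=1$, which is what is needed.

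For $\pi$, I would argue by contradiction. Assume $\pi$ is not relatively contractive. By the same measurable-selection reduction appearing in the proof of Theorem \ref{T:contractiveexttopo} (using a countable generating algebra on $X$ together with rational thresholds), there exist a single measurable $A\subseteq X$, a single $\delta>0$, and a set $A_Y\subseteq Y$ with $\eta(A_Y)>0$ such that for every $y\in A_Y$ one has $D_\pi(y)(A)>0$ and $\sup_g D_\pi^{(g)}(y)(A)\leq 1-\delta$. Define $A':=A\cap\pi^{-1}(A_Y)$. Since $D_\pi(y)$ is supported on $\pi^{-1}(y)$, we have $D_\pi(y)(A')=\bbone_{A_Y}(y)\cdot D_\pi(y)(A)$, whence $\nu(A')>0$ and consequently $D_{\varphi\circ\pi}(z)(A')>0$ for $z$ in a positive-$\rho$-measure set.

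The heart of the argument is the analogous identity $D_\pi^{(g)}(y)(A')=\bbone_{A_Y}(y)\cdot D_\pi^{(g)}(y)(A)$, valid for every $g\in G$. This follows from $G$-equivariance of $\pi$: the translate $gA'=gA\cap\pi^{-1}(gA_Y)$, and the fiber $\pi^{-1}(gy)$ on which $D_\pi(gy)$ sits meets $\pi^{-1}(gA_Y)$ precisely when $y\in A_Y$. Integrating against $D_\varphi^{(g)}(z)$ produces $D_{\varphi\circ\pi}^{(g)}(z)(A')\leq(1-\delta)\,D_\varphi^{(g)}(z)(A_Y)\leq 1-\delta$ uniformly in $g$, and hence $\sup_g D_{\varphi\circ\pi}^{(g)}(z)(A')\leq 1-\delta<1$ for every $z$. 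On the positive-measure set of $z$ where $D_{\varphi\circ\pi}(z)(A')>0$ this contradicts relative contractiveness of $\varphi\circ\pi$. The only nontrivial obstacle is the initial reduction to a uniform pair $(A,\delta)$ over a positive-measure $A_Y$, but this is exactly the maneuver already carried out in the proof of Theorem \ref{T:contractiveexttopo}, so I would simply invoke it rather than repeat it.
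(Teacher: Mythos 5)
Your argument is correct, but it takes a genuinely more elementary route than the paper's. The paper proves both halves through the topological characterization (Theorem \ref{T:contractiveexttopo}): for $\varphi$ it pushes forward weak* convergence $D_{\varphi\circ\pi}^{(g_n)}(z)\to\delta_x$ to get $D_{\varphi}^{(g_n)}(z)\to\delta_{\pi(x)}$, and for $\pi$ it works in a compact model with a continuous function $f$ drawn from a countable dense subset of $C(X)$, derives the estimate $D_{\varphi\circ\pi}^{(g)}(z)(f)\le \|f\|_{L^\infty(X,D_{\varphi\circ\pi}(z))}-\delta D_{\varphi}^{(g)}(z)(A)$, and reaches a contradiction by contracting $D_{\varphi\circ\pi}^{(g_n)}(z)$ to a point mass at a norm-attaining point over a closed $A\subseteq Y$. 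You stay entirely at the level of sets and the raw definition: for $\varphi$ the test set $\pi^{-1}(C)$ together with $\pi_*D_{\varphi\circ\pi}^{(g)}(z)=D_\varphi^{(g)}(z)$ gives the claim in two lines, and for $\pi$ the set $A'=A\cap\pi^{-1}(A_Y)$ together with the tower identity $D_{\varphi\circ\pi}^{(g)}(z)=\int D_\pi^{(g)}(y)\,dD_\varphi^{(g)}(z)(y)$ and the support property of Proposition \ref{P:conjugateddisintegration} yields the uniform bound $\sup_g D_{\varphi\circ\pi}^{(g)}(z)(A')\le 1-\delta$ directly, with no weak* limits, no compact models, and no need for $A_Y$ to be closed. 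What your approach buys is robustness and brevity; what the paper's buys is that the reduction to a \emph{single} witness is automatic there, since sup-norm approximation by a countable dense family of continuous functions controls $D_\pi^{(g)}(y)(f)$ uniformly in $g$.

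That last point is the one place you should not be too quick. The uniformization you invoke --- a single measurable $A$, a single $\delta>0$, and a positive-measure $A_Y$ on which $D_\pi(y)(A)>0$ and $\sup_g D_\pi^{(g)}(y)(A)\le 1-\delta$ --- is indeed asserted verbatim at the start of the converse direction of Theorem \ref{T:contractiveexttopo}, so you are entitled to cite it. But be aware that the na\"ive ``countable generating algebra plus rational thresholds'' justification you sketch does not go through as stated: approximating an arbitrary bad set $B_y$ by an algebra element $A$ with $D_\pi(y)(B_y\triangle A)$ small does not control $D_\pi^{(g)}(y)(B_y\triangle A)$ uniformly in $g$, because the conjugated disintegration measures are only mutually absolutely continuous, with no uniform bound on the Radon--Nikodym derivatives. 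This is precisely the difficulty the continuous-function formulation is designed to circumvent, and it is the reason the paper routes the $\pi$ half through compact models. Your proof is fine if you take the uniformized negation as the paper states it; if you want it self-contained, replace the set-algebra sketch by an appeal to Proposition \ref{P:contractiveexte} and a countable dense family in $C(X)$, after which your set-level contradiction still works with $A=\{f>\|f\|-\epsilon\}$ for suitable $f$ and $\epsilon$.
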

\begin{proof}
We use Theorem \ref{T:contractiveexttopo} and take a continuous compact model for $\pi$ to do so.  First observe, for all $g \in G$ and almost every $z$, that $\pi_{*}D_{\varphi\circ\pi}^{(g)}(z) = D_{\varphi}^{(g)}(z)$.  For such $z$ where also $\overline{\mathrm{conv}}~\{ D_{\varphi\circ\pi}^{(g)}(z) \} = P((\varphi\circ\pi)^{-1}(z))$ and every $x$ in the support of $D_{\varphi\circ\pi}(z)$, there is $g_{n} \in G$ such that $D_{\varphi\circ\pi}^{(g_{n})}(z) \to \delta_{x}$.  Therefore
\[
D_{\varphi}^{(g_{n})}(z) = \pi_{*}D_{\varphi\circ\pi}^{(g_{n})}(z) \to \pi_{*}\delta_{x} = \delta_{\pi(x)}
\]
and so for every $y$ in the support of $D_{\varphi}(z)$, the point mass $\delta_{y}$ is a limit point of $D_{\varphi}^{(g)}(z)$.  Hence $\varphi$ is relatively contractive.

Suppose that $\pi$ is not relatively contractive.  Then, by the proof of Theorem \ref{T:contractiveexttopo}, there exists a continuous compact model for $\pi : X \to Y$ such that $f \mapsto |D_{\pi}^{(g)}(y)(f)|$ is not an isometry from $C(X)$ to $L^{\infty}(G)$ for a positive measure set of $y \in Y$.  

Observe that if the map is an isometry on a countable dense set $C_{0} \subseteq C(X)$ then for any $f \in C(X)$ there exists $f_{n} \in C_{0}$ with $f_{n} \to f$ in sup norm, hence
\[
|D_{\pi}^{(g)}(y)(f)| = |D_{\pi}^{(g)}(y)(f - f_{n}) + D_{\pi}^{(g)}(f_{n})| \geq |D_{\pi}^{(g)}(y)(f_{n})| - \| f - f_{n} \|_{\infty}.
\]
For $\epsilon > 0$, choose $n$ such that $\| f - f_{n} \|_{\infty} < \epsilon$.  Choose $g$ such that $|D_{\pi}^{(g)}(y)(f_{n})| > \| f_{n} \| - \epsilon$.  Then
\[
|D_{\pi}^{(g)}(y)(f)| > \| f_{n} \| - \epsilon - \epsilon > \| f \| - 3\epsilon
\]
and so the map is an isometry for $f$ as well.

Therefore, there is a positive measure set of $y$ such that $f \mapsto |D_{\pi}^{(g)}(y)(f)|$ is not an isometry on $C_{0}$.  Hence, since $C_{0}$ is countable, there is some $f \in C_{0}$ and a positive measure set of $y$ such that $\sup_{g} |D_{\pi}^{(g)}(y)(f)| < \| f \|_{L^{\infty}(X,D_{\pi}(y))}$.  So there is some $\delta > 0$ and a measurable set $A \subseteq Y$ with $\eta(A) > 0$ such that $\sup_{g} |D_{\pi}^{(g)}(y)(f)| < \| f \|_{L^{\infty}(X,D_{\pi}(y))} - \delta$ for all $y \in A$.  We may assume (by taking a subset) that $A$ is closed.
Since $\eta$ is a Borel measure, it is regular, hence we may assume $A$ is closed (by taking a subset).

Now there exists a positive measure set $B \subseteq Z$ on which $D_{\varphi}(z)(A) > 0$ for $z \in B$.  For $z \in B$ such that $z$ is in the measure one set on which $\varphi \circ \pi$ contracts to point masses,
\begin{align*}
D_{\varphi \circ \pi}^{(g)}(z)(f)
&= \int_{\varphi^{-1}(z)} D_{\pi}^{(g)}(y)(f)~dD_{\varphi}^{(g)}(z)(y) \\
&= \int_{\varphi^{-1}(z) \cap A} D_{\pi}^{(g)}(y)(f)~dD_{\varphi}^{(g)}(z)(y) + \int_{\varphi^{-1}(z) \setminus A} D_{\pi}^{(g)}(y)(f)~dD_{\varphi}^{(g)}(z)(y) \\
&\leq \int_{\varphi^{-1}(z) \cap A} \| f \|_{L^{\infty}(X,D_{\pi}(y))} - \delta~dD_{\varphi}^{(g)}(z)(y)
+ \int_{\varphi^{-1}(z) \setminus A} \| f \|_{L^{\infty}(X,D_{\pi}(y))}~dD_{\varphi}^{(g)}(z)(y) \\
&\leq \| f \|_{L^{\infty}(X,D_{\varphi \circ \pi}(z))} - \delta D_{\varphi}^{(g)}(z)(A).
\end{align*}

Now for any $x$ in the support of $D_{\varphi \circ \pi}(z)$, there exists $g_{n}$ such that $D_{\varphi \circ \pi}^{(g_{n})}(z) \to \delta_{x}$.  Hence also $D_{\varphi}^{(g_{n})}(z) \to \delta_{\pi(x)}$.  Choose $x \in \pi^{-1}(A) \cap (\mathrm{supp}~D_{\varphi\circ\pi}(z))$ such that $f(x) = \| f \|_{L^{\infty}(X,D_{\varphi \circ \pi}(z))}$ (possible since $\pi^{-1}(A) \cap (\mathrm{supp}~D_{\varphi\circ\pi}(z))$ is closed, hence compact, and $f$ is continuous).
Then
\begin{align*}
f(x) &= \lim_{n} D_{\varphi \circ \pi}^{(g_{n})}(z)(f)
\leq \lim_{n}  \| f \|_{L^{\infty}(X,D_{\varphi \circ \pi}(z))} - \delta D_{\varphi}^{(g_{n})}(z)(A) \\
&= \| f \|_{L^{\infty}(X,D_{\varphi \circ \pi}(z))} - \delta \delta_{\pi(x)}(A) = \| f \|_{L^{\infty}(X,D_{\varphi \circ \pi}(z))} - \delta
\end{align*}
is a contradiction.  Hence $\pi$ is relatively contractive.
\end{proof}

The above statement is the analogue of one direction of the similar well-known fact about relative measure-preserving:
\begin{theorem}
Let $\pi : (X,\nu) \to (Y,\eta)$ and $\varphi : (Y,\eta) \to (Z,\zeta)$ be $G$-maps of $G$-spaces such that $\varphi \circ \pi : (X,\nu) \to (Z,\zeta)$ is relatively measure-preserving.  Then $\pi$ and $\varphi$ are both relatively measure-preserving.  Conversely, if $\pi$ and $\varphi$ are relatively measure-preserving then so is $\varphi \circ \pi$.
\end{theorem}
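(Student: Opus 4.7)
The plan is to base everything on the tower identity
\[
D_{\varphi\circ\pi}(z) = \int_{Y} D_{\pi}(y)\,dD_{\varphi}(z)(y),
\]
which follows from the uniqueness of disintegration: the right-hand side is supported on $(\varphi\circ\pi)^{-1}(z)$, and integrating against $\zeta$ collapses to $\nu$, so it must agree with $D_{\varphi\circ\pi}(z)$ for $\zeta$-almost every $z$.

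The converse direction will be a short verification. Substituting the equivariance identities $D_{\pi}(gy)=gD_{\pi}(y)$ and $D_{\varphi}(gz)=gD_{\varphi}(z)$ into the tower, together with the change-of-variables formula for the pushforward $gD_{\varphi}(z)$, rewrites $D_{\varphi\circ\pi}(gz)$ as $g D_{\varphi\circ\pi}(z)$ in a few lines.

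For the forward direction, I would handle $\varphi$ first. Pushing the tower identity through $\pi$ and using $\pi_{*}D_{\pi}(y)=\delta_{y}$ gives $\pi_{*}D_{\varphi\circ\pi}(z) = D_{\varphi}(z)$. Applying $\pi_{*}$ to the assumed equivariance $D_{\varphi\circ\pi}(gz)=gD_{\varphi\circ\pi}(z)$ and using that $\pi_{*}$ commutes with the $G$-action then delivers $D_{\varphi}(gz)=gD_{\varphi}(z)$.

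The main obstacle will be extracting relative measure-preservation for $\pi$ itself. Here I would write $D_{\varphi\circ\pi}(gz)$ in two ways using the tower. Directly it is $\int D_{\pi}(y'')\,dD_{\varphi}(gz)(y'')$. Starting instead from $gD_{\varphi\circ\pi}(z)$, pulling $g$ inside the tower integral, changing variables $y''=gy$, and applying the equivariance of $\varphi$ just established rewrites it as $\int g D_{\pi}(g^{-1}y'')\,dD_{\varphi}(gz)(y'')$. Both integrands are probability measures supported on $\pi^{-1}(y'')$ (the second because $g\pi^{-1}(g^{-1}y'')=\pi^{-1}(y'')$ by $G$-equivariance of $\pi$), so uniqueness of disintegration forces $D_{\pi}(y'')=gD_{\pi}(g^{-1}y'')$ for $D_{\varphi}(gz)$-almost every $y''$. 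Integrating this in $z$ against $\zeta$ yields the equation for $(g\eta)$-almost every $y''$, and the quasi-invariance of $\eta$ lets me pass to $\eta$-almost every $y''$, giving $D_{\pi}(gy)=gD_{\pi}(y)$ for $\eta$-almost every $y$, as required.
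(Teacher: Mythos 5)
The paper states this theorem without proof (it is labelled a well-known fact and used as a foil for Theorem \ref{T:contractivecomp}), so there is no argument of the author's to compare against; your proposal supplies a complete and correct proof. The tower identity $D_{\varphi\circ\pi}(z)=\int_Y D_{\pi}(y)\,dD_{\varphi}(z)(y)$ is the right organizing principle: the converse is a direct substitution, the identity $\pi_{*}D_{\varphi\circ\pi}(z)=D_{\varphi}(z)$ disposes of $\varphi$, and your two computations of $D_{\varphi\circ\pi}(gz)$ correctly reduce the statement for $\pi$ to the almost-everywhere uniqueness of the disintegration of $D_{\varphi\circ\pi}(gz)$ over $D_{\varphi}(gz)$ via $\pi$ --- this applies because $\pi_{*}D_{\varphi\circ\pi}(gz)=D_{\varphi}(gz)$ and both families $y''\mapsto D_{\pi}(y'')$ and $y''\mapsto gD_{\pi}(g^{-1}y'')$ live on the fibers $\pi^{-1}(y'')$ and integrate to $D_{\varphi\circ\pi}(gz)$. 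Two points are worth making explicit in a write-up. First, the exceptional set $N_{g}=\{y'' : D_{\pi}(y'')\neq gD_{\pi}(g^{-1}y'')\}$ should be observed to be Borel (it is, since $D_{\pi}$ is Borel and equality is a Borel relation on $P(X)$), and the identity $\int_{Z}D_{\varphi}(gz)\,d\zeta(z)=g\eta$ that you use to convert ``$D_{\varphi}(gz)$-null for a.e.\ $z$'' into ``$g\eta$-null'' relies on the equivariance of $D_{\varphi}$ --- which you have already established at that stage, so the order of your argument is correct. Second, as elsewhere in the paper, your conclusion is ``for each fixed $g$, for $\eta$-almost every $y$''; passing to a single conull set valid for all $g$ simultaneously is immediate when $G$ is countable and requires the usual continuity/Fubini argument for general locally compact second countable $G$, which is consistent with the conventions the paper itself adopts for the analogous statements about conjugated disintegration measures.
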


\begin{corollary}\label{C:factorsstaycontractive}
Any $G$-factor of a contractive $G$-space is a contractive $G$-space.  Any $G$-factor of a measure-preserving $G$-space is a measure-preserving $G$-space.
\end{corollary}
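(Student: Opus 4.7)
The plan is to derive both statements from results already in hand. For the contractive half, I will use the characterization (stated earlier in the section) that a $G$-space $(X,\nu)$ is contractive if and only if the map $(X,\nu)\to (\mathrm{pt})$ to the trivial one-point system is relatively contractive. Given a $G$-factor $\pi:(X,\nu)\to(Y,\eta)$ of a contractive space, I would consider the composition
\[
(X,\nu)\xrightarrow{\pi}(Y,\eta)\xrightarrow{\varphi}(\mathrm{pt}),
\]
where $\varphi$ is the trivial map. By hypothesis $\varphi\circ\pi$ is relatively contractive, so Theorem~\ref{T:contractivecomp} applies and yields that both $\pi$ and $\varphi$ are relatively contractive. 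In particular, $\varphi:(Y,\eta)\to(\mathrm{pt})$ is relatively contractive, which by the same characterization means that $(Y,\eta)$ is contractive. This is essentially a one-line deduction from Theorem~\ref{T:contractivecomp}.

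For the measure-preserving half, I would give a direct pushforward argument without appealing to the relativized machinery. Let $(X,\nu)$ be measure-preserving and let $\pi:(X,\nu)\to(Y,\eta)$ be a $G$-factor, so that $\pi_{*}\nu=\eta$ and $\pi$ is $G$-equivariant. For any $g\in G$ and any Borel $E\subseteq Y$,
\[
g\eta(E)=\eta(g^{-1}E)=\nu(\pi^{-1}(g^{-1}E))=\nu(g^{-1}\pi^{-1}(E))=g\nu(\pi^{-1}(E))=\nu(\pi^{-1}(E))=\eta(E),
\]
using $G$-equivariance of $\pi$ in the middle step and $g\nu=\nu$ at the end. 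Hence $g\eta=\eta$ for every $g\in G$, so $(Y,\eta)$ is measure-preserving. (Alternatively, one could observe that the map to a point is trivially relatively measure-preserving, and invoke the stated fact that a composition of relatively measure-preserving $G$-maps is again relatively measure-preserving, but the direct computation is cleaner.)

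There is no real obstacle here: the work has been done in Theorem~\ref{T:contractivecomp}, and the corollary is essentially the specialization of that theorem (and of the analogous measure-preserving fact) to the case where the base of the tower is the one-point system. The only minor care is to make the identification ``contractive equals relatively contractive over a point'' explicit, which was recorded in the earlier proposition immediately following the definition of relatively contractive maps.
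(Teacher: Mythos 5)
Your proof is correct and follows the paper's argument essentially verbatim for the contractive half: identify ``contractive'' with ``relatively contractive over a point,'' factor the map to a point through $(Y,\eta)$, and apply Theorem~\ref{T:contractivecomp}. For the measure-preserving half the paper simply invokes the analogous composition theorem for relatively measure-preserving maps, whereas you give the (equally valid, and indeed more elementary) direct pushforward computation, while also noting the paper's route as an alternative.
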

\begin{proof}
Let $(X,\nu)$ be a contractive $G$-space and $\pi : (X,\nu) \to (Y,\eta)$ be a $G$-map of $G$-spaces.  Take $\varphi : (Y,\eta) \to 0$ to be the $G$-map to the trivial one-point space.  Then $\varphi \circ \pi : (X,\nu) \to 0$ is relatively contractive since $(X,\nu)$ is contractive and therefore $\varphi$ is relatively contractive since its composition with $\pi$ is and so $(Y,\eta)$ is contractive.  The same argument applied to relative measure-preserving maps shows the second statement.
\end{proof}

\begin{corollary}\label{C:orthomaps}
Let $(X,\nu)$ be a $G$-space such that $\pi : (X,\nu) \to (Y,\eta)$ is a relatively contractive $G$-map of $G$-spaces and $\varphi : (X,\nu) \to (Z,\zeta)$ is a relatively measure-preserving $G$-map of $G$-spaces.  Then $\pi \times \varphi : (X,\nu) \to (Y \times Z, (\pi \times \varphi)_{*}\nu)$ by $(\pi \times \varphi)(x) = (\pi(x),\varphi(x))$ is a $G$-isomorphism.
\end{corollary}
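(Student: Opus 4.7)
The plan is to view $\psi := \pi \times \varphi$ as an intermediate quotient sitting between $(X,\nu)$ and each of $(Y,\eta)$, $(Z,\zeta)$, and to show that $\psi$ inherits both relative contractiveness and relative measure-preservation simultaneously. Once that is done, Proposition \ref{P:bothrels} finishes the job immediately.

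Concretely, I would first set $\alpha = (\pi \times \varphi)_{*}\nu$ and observe that $\psi : (X,\nu) \to (Y \times Z, \alpha)$ is a $G$-map. The two coordinate projections $p_{Y} : (Y \times Z, \alpha) \to (Y,\eta)$ and $p_{Z} : (Y\times Z,\alpha) \to (Z,\zeta)$ are $G$-maps of $G$-spaces (since $(p_{Y})_{*}\alpha = \pi_{*}\nu = \eta$ and similarly for $p_{Z}$), and they satisfy
\[
p_{Y} \circ \psi = \pi, \qquad p_{Z} \circ \psi = \varphi
\]
almost everywhere.

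Now I would apply the two compositional results just established. Since $\pi = p_{Y} \circ \psi$ is relatively contractive, Theorem \ref{T:contractivecomp} forces $\psi$ itself to be relatively contractive. Since $\varphi = p_{Z} \circ \psi$ is relatively measure-preserving, the analogous theorem for relatively measure-preserving maps (stated just after Theorem \ref{T:contractivecomp}) forces $\psi$ to be relatively measure-preserving as well. Thus $\psi$ is simultaneously relatively contractive and relatively measure-preserving, so Proposition \ref{P:bothrels} gives that $\psi = \pi \times \varphi$ is a $G$-isomorphism.

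The argument is essentially a bookkeeping step on top of the compositional theorems, so there is no serious obstacle; the only subtlety to keep in mind is that the codomain of $\psi$ must be equipped with the pushforward measure $\alpha$ (not $\eta \times \zeta$) in order for the two projections to be genuine $G$-maps of $G$-spaces, which is precisely why the statement of the corollary writes the target as $(Y \times Z, (\pi \times \varphi)_{*}\nu)$.
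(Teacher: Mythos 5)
Your argument is correct and is essentially identical to the paper's own proof: both decompose $\pi$ and $\varphi$ through $\pi\times\varphi$ and the coordinate projections, invoke Theorem \ref{T:contractivecomp} (and its measure-preserving analogue) to conclude that $\pi\times\varphi$ is simultaneously relatively contractive and relatively measure-preserving, and then apply Proposition \ref{P:bothrels}. Your remark about equipping the target with the pushforward measure is a sensible clarification but does not change the argument.
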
 
\begin{proof}
Consider the $G$-map $\mathrm{pr}_{Y} \circ (\pi \times \varphi) = \pi$.  Since $\pi$ is relatively contractive then both the projection map to $Y$ and $\pi \times \varphi$ are relatively contractive (Theorem \ref{T:contractivecomp}).  Likewise $\mathrm{pr}_{Z} \circ (\pi \times \varphi) = \varphi$ is relatively measure-preserving so the projection to $Z$ and $\pi \times \varphi$ are relatively measure-preserving.  By Proposition \ref{P:bothrels} then $\pi \times \varphi$ is an isomorphism.
\end{proof}

\subsection{Uniqueness of Relatively Contractive Maps}

We are now in a position to present the uniqueness theorem for relatively contractive maps.  The proof is somewhat technical and can be found in \cite{CP14}, we opt to omit it in the interest of brevity.

\begin{theorem}\label{T:relcontractiveunique}
Let $(X,\nu)$ be a contractive $G$-space and $(Y,\eta)$ be a measure-preserving $G$-space.  Let $\psi : (X \times Y, \nu \times \eta) \to (Y,\eta)$ be the natural projection map (treating $(X \times Y, \nu \times \eta)$ as $G$-space with the diagonal action).  Let $\pi : (X \times Y, \nu \times \eta) \to (Z,\alpha)$ be a $G$-map of $G$-spaces and let $\pi^{\prime} : (X \times Y, \nu \times \eta) \to (Z,\alpha^{\prime})$ be a $G$-map of $G$-spaces such that $\alpha^{\prime}$ is in the same measure class as $\alpha$.  Let $\varphi : (Z,\alpha) \to (Y,\eta)$ and $\varphi^{\prime} : (Z,\alpha^{\prime}) \to (Y,\eta)$ be $G$-maps such that $\varphi \circ \pi = \psi$ and $\varphi^{\prime} \circ \pi^{\prime} = \psi$.  That is, we consider the following commutative diagram of $G$-maps and $G$-spaces:
\begin{diagram}
&			&				&(X \times Y, \nu \times \eta)		&					&				\\
&			&\ldTo^{\pi}		&							&\rdTo^{\pi^{\prime}}		&				\\
&(Z,\alpha)	&				&\dTo_{\mathrm{proj}}^{\psi}					&					&(Z,\alpha^{\prime})	\\
&			&\rdTo_{\varphi}	&							&\ldTo_{\varphi^{\prime}}	&				\\
&			&				&(Y,\eta)						&					&
\end{diagram}
Assume that the disintegrations $D_{\varphi}(y)$ of $\alpha$ over $\eta$ via $\varphi$ and the disintegrations $D_{\varphi^{\prime}}(y)$ of $\alpha^{\prime}$ over $\eta$ via $\varphi^{\prime}$ have the property that $D_{\varphi}(y)$ and $D_{\varphi^{\prime}}(y)$ are in the same measure class almost surely.  Then $\pi = \pi^{\prime}$ almost everywhere, $\varphi = \varphi^{\prime}$ almost everywhere and $\alpha = \alpha^{\prime}$.
\end{theorem}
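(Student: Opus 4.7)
The plan is to encode the uniqueness claim as a statement about a relative joining of $(Z,\alpha)$ with $(Z,\alpha')$ over $(Y,\eta)$. Define the $G$-equivariant Borel map $\Theta : (X \times Y, \nu \times \eta) \to Z \times Z$ by $\Theta(x,y) = (\pi(x,y),\pi'(x,y))$ and let $\beta = \Theta_{*}(\nu \times \eta)$. Since $\varphi \circ \pi = \varphi' \circ \pi' = \psi$, the measure $\beta$ is concentrated on $\{(z_1,z_2) : \varphi(z_1) = \varphi'(z_2)\}$, so the formula $\Phi(z_1,z_2) = \varphi(z_1) = \varphi'(z_2)$ defines a $G$-map $\Phi : (Z\times Z,\beta) \to (Y,\eta)$ with $\Phi \circ \Theta = \psi$. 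It suffices to show that $\beta$ is supported on the diagonal of $Z \times Z$, for then the coordinate projections $p_1,p_2 : (Z\times Z,\beta) \to Z$ agree $\beta$-almost surely and the equalities $\pi = \pi'$, $\alpha = \alpha'$, $\varphi = \varphi'$ follow at once.

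Next I would show that every map in the diagram is relatively contractive. Example \ref{E:examplecont} gives that $\psi$ is relatively contractive since $(X,\nu)$ is contractive, and then Theorem \ref{T:contractivecomp} applied successively to the factorizations $\psi = \varphi \circ \pi = \varphi' \circ \pi' = \Phi \circ \Theta$ and $\Phi = \varphi \circ p_1 = \varphi' \circ p_2$ forces $\pi,\pi',\varphi,\varphi',\Theta,\Phi,p_1,p_2$ all to be relatively contractive. Using the $G$-equivariance of $p_i$, one checks that $(p_1)_{*}D_\Phi^{(g)}(y) = D_\varphi^{(g)}(y)$ and $(p_2)_{*}D_\Phi^{(g)}(y) = D_{\varphi'}^{(g)}(y)$ for every $g \in G$. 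Passing to a continuous compact model of $\Phi$ via Theorem \ref{T:contractiveexttopo}, for $\eta$-almost every $y$ and every $(z_1,z_2)$ in the support of $D_\Phi(y)$ one obtains a sequence $g_n \in G$ with $D_\Phi^{(g_n)}(y) \to \delta_{(z_1,z_2)}$ in weak$^{\ast}$, and hence $D_\varphi^{(g_n)}(y) \to \delta_{z_1}$ and $D_{\varphi'}^{(g_n)}(y) \to \delta_{z_2}$ along the \emph{same} sequence $g_n$.

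The main obstacle is to deduce $z_1 = z_2$ from these two simultaneous weak$^{\ast}$ limits together with the hypothesis that $D_\varphi(y)$ and $D_{\varphi'}(y)$ lie in the same measure class. My plan here is to invoke Lemma \ref{L:abscontmeas}: the Radon--Nikodym derivative $f_y = dD_{\varphi'}(y)/dD_\varphi(y)$ is $\eta$-a.e.\ defined, and tracking its behavior along the contracting sequence via the algebraic characterization (Proposition \ref{P:contractiveexte}) should force the two limiting point masses to coincide when tested against a countable dense family of continuous functions separating points of the compact model. Once $\beta$ is seen to be diagonal, $p_1$ and $p_2$ become measurable $G$-isomorphisms, so $\pi = \pi'$ almost everywhere, and pushing forward yields $\alpha = \alpha'$ and $\varphi = \varphi'$.
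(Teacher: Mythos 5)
Your overall architecture is reasonable and its first two-thirds are sound: forming the graph joining $\beta=\Theta_{*}(\nu\times\eta)$, observing that it suffices to show $\beta$ is carried by the diagonal, propagating relative contractivity through the diagram via Example \ref{E:examplecont} and Theorem \ref{T:contractivecomp}, and using Theorem \ref{T:contractiveexttopo} to produce, for a.e.\ $y$ and every $(z_{1},z_{2})$ in the support of $D_{\Phi}(y)$, a single sequence $g_{n}$ with $D_{\varphi}^{(g_{n})}(y)\to\delta_{z_{1}}$ and $D_{\varphi'}^{(g_{n})}(y)\to\delta_{z_{2}}$ simultaneously. (Be aware that the paper itself does not prove this theorem --- it defers to \cite{CP14} as ``somewhat technical'' --- so your argument has to stand on its own.) One thing you get for free and should state: since $D_{\varphi}(y)$ and $D_{\varphi'}(y)$ are equivalent and are supported on $\varphi^{-1}(y)$ and $(\varphi')^{-1}(y)$ respectively, each gives full mass to $\varphi^{-1}(y)\cap(\varphi')^{-1}(y)$, and integrating over $y$ already yields $\varphi=\varphi'$ almost everywhere with no contraction argument at all.

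The genuine gap is your final step, and it is not a detail --- it is the entire technical content of the theorem. You must rule out $z_{1}\neq z_{2}$, and your proposed mechanism is that $D_{\varphi}^{(g_{n})}(y)=g_{n}^{-1}D_{\varphi}(g_{n}y)$ and $D_{\varphi'}^{(g_{n})}(y)=g_{n}^{-1}D_{\varphi'}(g_{n}y)$ are mutually absolutely continuous for each $n$, so that ``tracking the Radon--Nikodym derivative'' forces the limits to agree. Termwise equivalence of two sequences of measures places no constraint on their weak* limits: on $[0,1]$, let $u_{n}$ and $v_{n}$ denote the uniform probability measures on $[0,\frac{1}{n}]$ and $[1-\frac{1}{n},1]$, and set $\mu_{n}=(1-\frac{1}{n})u_{n}+\frac{1}{n}v_{n}$ and $\nu_{n}=\frac{1}{n}u_{n}+(1-\frac{1}{n})v_{n}$; these are equivalent for every $n$, yet $\mu_{n}\to\delta_{0}$ while $\nu_{n}\to\delta_{1}$. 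What rescues the non-relative statement (Corollary \ref{C:contractiveuniqueCS}, and the paper's proof of Theorem \ref{T:moregeneral}) is that there the two sequences are translates $g_{n}\alpha$, $g_{n}\alpha'$ of one \emph{fixed} equivalent pair, so a single $L^{1}$ derivative controls every $n$ at once by uniform integrability: $\alpha(A_{n})\to 0$ forces $\alpha'(A_{n})=\int_{A_{n}}\frac{d\alpha'}{d\alpha}\,d\alpha\to 0$. In your relative setting the derivative $dD_{\varphi'}(w)/dD_{\varphi}(w)$ lives on the fiber over $w=g_{n}y$, which moves with $n$; Lemma \ref{L:abscontmeas} gives equivalence fiber by fiber but nothing uniform across fibers, so there is no integrable majorant and the uniform-integrability argument does not apply. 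To close the gap you must use that these fiberwise derivatives are not arbitrary --- after the free reduction to $\varphi=\varphi'$ they are restrictions to the fibers of the single global derivative $d\alpha'/d\alpha\in L^{1}(Z,\alpha)$ --- and then control the escape of mass of that one function along the moving fibers over $g_{n}y$. None of this appears in your sketch; it is exactly the part the paper declines to write down. As written, your argument establishes $\varphi=\varphi'$ but not $\pi=\pi'$ or $\alpha=\alpha'$.
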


\begin{corollary}[Creutz-Shalom \cite{CS14}]\label{C:contractiveuniqueCS}
Let $(X,\nu)$ be a contractive $G$-space and let $\pi : (X,\nu) \to (Z,\alpha)$ and $\pi^{\prime} : (X,\nu) \to (Z,\alpha^{\prime})$ be $G$-maps of $G$-spaces such that $\alpha$ and $\alpha^{\prime}$ are in the same measure class.  Then $\pi = \pi^{\prime}$ almost surely and $\alpha = \alpha^{\prime}$.
\end{corollary}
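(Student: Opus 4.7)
The plan is to recognize Corollary \ref{C:contractiveuniqueCS} as the special case of Theorem \ref{T:relcontractiveunique} in which the measure-preserving factor $(Y,\eta)$ is taken to be the trivial one-point $G$-space, so that no new ergodic-theoretic work is required beyond unpacking definitions.

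First, I would verify that the one-point system qualifies as a measure-preserving $G$-space (vacuously so) and observe that with this choice the product $(X \times Y, \nu \times \eta)$ is canonically $G$-isomorphic to $(X,\nu)$, with the projection $\psi$ becoming the unique map to the point. The maps $\pi : (X,\nu) \to (Z,\alpha)$ and $\pi' : (X,\nu) \to (Z,\alpha')$ of the corollary then play the roles of the corresponding maps in the theorem, while the factor maps $\varphi : (Z,\alpha) \to (Y,\eta)$ and $\varphi' : (Z,\alpha') \to (Y,\eta)$ demanded by the theorem are forced to be the unique constant maps to the point, so the commutativity conditions $\varphi \circ \pi = \psi = \varphi' \circ \pi'$ are automatic.

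Next, I would check that the compatibility hypothesis on disintegrations in Theorem \ref{T:relcontractiveunique} collapses exactly to the hypothesis of the corollary. Since $Y$ is a single point, the disintegration $D_{\varphi}(y)$ of $\alpha$ over $\eta$ via $\varphi$ at the unique point $y \in Y$ is simply $\alpha$ itself (viewed as a probability measure on $Z$), and likewise $D_{\varphi'}(y) = \alpha'$. Hence the requirement that $D_{\varphi}(y)$ and $D_{\varphi'}(y)$ lie in the same measure class almost surely becomes precisely the assumption that $\alpha$ and $\alpha'$ are in the same measure class. Applying Theorem \ref{T:relcontractiveunique} in this configuration then yields directly that $\pi = \pi'$ almost everywhere and $\alpha = \alpha'$, which is the desired conclusion. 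I do not expect any genuine obstacle: the only point worth stating carefully is the identification of $(X \times \{\mathrm{pt}\}, \nu \times \delta_{\mathrm{pt}})$ with $(X,\nu)$ as $G$-spaces under the relevant $G$-maps, and this is purely notational.
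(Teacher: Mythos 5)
Your proposal is correct and matches the paper's own proof, which likewise deduces the corollary from Theorem \ref{T:relcontractiveunique} by taking $(Y,\eta)$ to be the trivial one-point system; your unpacking of the disintegration hypothesis (that $D_{\varphi}(y)$ over a point is just $\alpha$) is exactly the right observation and is left implicit in the paper.
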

\begin{proof}
Consider the composition of maps $\varphi \circ \pi : (X,\nu) \to 0$ where $\varphi : (Z,\eta) \to 0$ is the map to the trivial system.  Since $(X,\nu)$ is contractive, the preceding theorem gives the result.
\end{proof}

\subsection{Joinings With Contractive Spaces}

\begin{theorem}\label{T:contractivejoin}
Let $(X,\nu)$ be a contractive $G$-space and let $(Y,\eta)$ be a $G$-space.  Then there is at most one joining $(X \times Y, \alpha)$ of $(X,\nu)$ and $(Y,\eta)$ such that the projection to $X$ is relatively measure-preserving.
\end{theorem}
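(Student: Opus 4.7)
The plan is to leverage Jaworski's isometric characterization of contractiveness to reduce uniqueness of the joining to uniqueness of its fiber measures. Suppose $\alpha_1, \alpha_2$ are two joinings of $(X,\nu)$ and $(Y,\eta)$ whose projection $p_X$ to $X$ is relatively measure-preserving. By the proposition on disintegrations of joinings, the disintegrations over $\nu$ take the form $D_{p_X}(x) = \delta_x \times \alpha_{i,x}$ for Borel families $\alpha_{i,x} \in P(Y)$, and the relatively measure-preserving condition becomes, after projecting out the $\delta_x$ factor, the $G$-equivariance $\alpha_{i, gx} = g \alpha_{i, x}$ for every $g \in G$ and $\nu$-almost every $x$.

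For a fixed Borel $E \subseteq Y$, set $h_i^E(x) = \alpha_{i,x}(E) \in [0,1]$. The key computation is
\[
\widehat{h_i^E}(g) = \int h_i^E(gx)\,d\nu(x) = \int \alpha_{i,gx}(E)\,d\nu(x) = \int \alpha_{i,x}(g^{-1}E)\,d\nu(x) = \alpha_i(X \times g^{-1}E) = \eta(g^{-1}E),
\]
which uses the $G$-equivariance of the disintegration together with the joining marginal $(p_Y)_* \alpha_i = \eta$. The right-hand side does not depend on $i$, so $\widehat{h_1^E} = \widehat{h_2^E}$ in $L^\infty(G, \mathrm{Haar})$.

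Since $(X,\nu)$ is contractive, Jaworski's theorem says that $f \mapsto \widehat{f}$ is an isometric, hence injective, embedding $L^\infty(X,\nu) \hookrightarrow L^\infty(G, \mathrm{Haar})$. Therefore $h_1^E = h_2^E$ as elements of $L^\infty(X,\nu)$, that is, $\alpha_{1,x}(E) = \alpha_{2,x}(E)$ for $\nu$-almost every $x$. Running $E$ through a countable algebra generating the Borel $\sigma$-algebra of the standard Borel space $Y$ and intersecting the resulting countably many conull sets yields $\alpha_{1,x} = \alpha_{2,x}$ for $\nu$-almost every $x$, and integrating against $\nu$ gives $\alpha_1 = \alpha_2$. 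The only real obstacle is spotting that the relatively measure-preserving hypothesis is strong enough to force $\widehat{h_i^E}(g)$ to equal $\eta(g^{-1}E)$ with no further information about $\alpha_i$; once this is seen, the isometry of Jaworski's theorem delivers uniqueness immediately, with no need to invoke the more elaborate uniqueness theorem for relatively contractive maps.
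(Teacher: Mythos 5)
Your proof is correct and follows essentially the same route as the paper: both arguments show that for any test function (or set) on $Y$, the function $x \mapsto D_{p_X}(x)(f\circ\mathrm{pr}_Y)$ has $g\nu$-integral equal to $g\eta(f)$ independently of the joining, and then invoke the isometric characterization of contractiveness (the paper writes $\|F\|_{L^\infty(X,\nu)} = \sup_g|g\nu(F)|$ applied to the difference $F_1 - F_2$, which is exactly your injectivity of $f\mapsto\widehat{f}$) to conclude the fiber measures coincide. Your use of indicators from a countable generating algebra in place of general $f\in L^\infty(Y,\eta)$ is an inessential variation.
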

\begin{proof}
Let $f \in L^{\infty}(Y,\eta)$ and define 
\[
F(x) = D_{\mathrm{pr}_{X}}(x)(f \circ \mathrm{pr}_{Y}).
\]
Taking compact models for $X$ and $Y$ such that $\pi$ is continuous makes clear that $F$ is a bounded Borel function on $X$.
Then for any $g \in G$ we have that, using that $\mathrm{pr}_{X}$ is relatively measure-preserving,
\begin{align*}
g\nu(F) &= \int_{X} F(gx)~d\nu(x) \\
&= \int_{X} \int_{X \times Y} f(\mathrm{pr}_{Y}(z,y))~dD_{\mathrm{pr}_{X}}(gx)(z,y)~d\nu(x) \\
&= \int_{X} \int_{X \times Y} f(g \mathrm{pr}_{Y}(z,y))~dD_{\mathrm{pr}_{X}}(x)(z,y)~d\nu(x) \\
&= \int_{X \times Y} f(g \mathrm{pr}_{Y}(z,y))~d\alpha(z,y) \\
&= \int_{Y} f(gy)~d(\mathrm{pr}_{Y})_{*}\alpha(y) = \int_{Y} f(gy)~d\eta(y) = g\eta(f).
\end{align*}

Suppose now that $(X \times Y, \alpha_{1})$ and $(X \times Y, \alpha_{2})$ are both joinings such that $\mathrm{pr}_{X}$ is relatively measure-preserving.  Fix $f \in L^{\infty}(Y,\eta)$ and let $F_{1}(x) = D_{\mathrm{pr}_{X}}^{1}(x)(f\circ \mathrm{pr}_{Y})$ and $F_{2}(x) = D_{\mathrm{pr}_{X}}^{2}(x)(f\circ \mathrm{pr}_{Y})$ where $D_{\mathrm{pr}_{X}}^{j}$ is the disintegration of $\alpha_{j}$ over $\nu$.  Set $F(x) = F_{1}(x) - F_{2}(x)$.  Then $F$ is a bounded Borel function on $X$ and by the above we have that $g\nu(F) = g\nu(F_{1}) - g\nu(F_{2}) = g\eta(f) - g\eta(f) = 0$ for all $g \in G$.  Since $(X,\nu)$ is contractive we also know that $\| F \|_{L^{\infty}(X,\nu)} = \sup_{g} |g\nu(F)| = 0$.  Therefore $F(x) = 0$ almost surely and so $F_{1}(x) = F_{2}(x)$ almost surely.  As this holds for all $f \in L^{\infty}(Y,\eta)$ we conclude that
\[
(\mathrm{pr}_{Y})_{*}D_{\mathrm{pr}_{X}}^{1}(x) = (\mathrm{pr}_{Y})_{*}D_{\mathrm{pr}_{X}}^{2}(x)
\]
for almost every $x \in X$.  The conclusion now follows since the measures have the same disintegration.
\end{proof}

\begin{corollary}\label{C:contractiveoppmp}
Let $(X,\nu)$ be a contractive $G$-space and $(Y,\eta)$ be a measure-preserving $G$-space.  Then the independent joining is the only joining of $(X,\nu)$ and $(Y,\eta)$ such that $\mathrm{pr}_{X}$ is relatively measure-preserving.
\end{corollary}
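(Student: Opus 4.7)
The plan is to deduce this directly from Theorem \ref{T:contractivejoin}, which already provides uniqueness: since at most one joining makes $\mathrm{pr}_X$ relatively measure-preserving, it suffices to verify that the independent joining $\nu \times \eta$ is such a joining.

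First I would check that $\nu \times \eta$ is a joining at all: both marginals are correct by Fubini, and for quasi-invariance under the diagonal action note that $g(\nu \times \eta) = g\nu \times g\eta = g\nu \times \eta$ since $\eta$ is $G$-invariant, and this is in the same measure class as $\nu \times \eta$ because $\nu$ is quasi-invariant.

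Next I would identify the disintegration of $\nu \times \eta$ over $\nu$ via $\mathrm{pr}_X$. The candidate is $D_{\mathrm{pr}_X}(x) = \delta_x \times \eta$. It is supported on $\mathrm{pr}_X^{-1}(x) = \{x\} \times Y$, and by Fubini
\[
\int_X (\delta_x \times \eta)\, d\nu(x) = \nu \times \eta,
\]
so by almost sure uniqueness of disintegrations this is indeed $D_{\mathrm{pr}_X}$.

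Finally I would verify $G$-equivariance of this disintegration, which is the defining property of a relatively measure-preserving map. For any $g \in G$ and almost every $x$,
\[
g D_{\mathrm{pr}_X}(x) = g(\delta_x \times \eta) = \delta_{gx} \times g\eta = \delta_{gx} \times \eta = D_{\mathrm{pr}_X}(gx),
\]
using that $(Y,\eta)$ is measure-preserving. Hence $\mathrm{pr}_X$ is relatively measure-preserving for the independent joining, and Theorem \ref{T:contractivejoin} then forces any other such joining to coincide with it. There is no real obstacle here beyond recognizing that the only content to check is the $G$-invariance of $\eta$ making $\delta_x \times \eta$ transform correctly under $G$.
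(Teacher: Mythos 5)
Your proof is correct and follows exactly the paper's argument: verify that for the independent joining the disintegration over $\nu$ is $D_{\mathrm{pr}_X}(x) = \delta_x \times \eta$, check its $G$-equivariance using the invariance of $\eta$, and invoke Theorem \ref{T:contractivejoin} for uniqueness. The only difference is that you spell out the routine verifications (marginals, quasi-invariance, identification of the disintegration) that the paper leaves implicit.
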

\begin{proof}
Observe that the independent joining $(X \times Y, \nu \times \eta)$ is a joining and that $D_{\mathrm{pr}_{X}}(x) = \delta_{x} \times \eta$.  Since $(Y,\eta)$ is measure-preserving,
\[
D_{\mathrm{pr}_{X}}(gx) = \delta_{gx} \times \eta = (g\delta_{x}) \times \eta = (g\delta_{x}) \times (g\eta) = g(\delta_{x} \times \eta) = g D_{\mathrm{pr}_{X}}(x)
\]
so $\mathrm{pr}_{X}$ is relatively measure-preserving.  By the previous theorem then the independent joining is the unique such joining.
\end{proof}

\begin{corollary}\label{C:isoprod}
Let $(X,\nu)$ be a $G$-space such that $\pi : (X,\nu) \to (Y,\eta)$ is a relatively measure-preserving $G$-map of $G$-spaces and $\varphi : (X,\nu) \to (Z,\zeta)$ is a relatively contractive $G$-map of $G$-spaces where $(Y,\eta)$ is a contractive $G$-space and $(Z,\zeta)$ is a measure-preserving $G$-space.  Then $(X,\nu)$ is $G$-isomorphic to $(Y \times Z, \eta \times \zeta)$.
\end{corollary}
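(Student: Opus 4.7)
The plan is to consider the candidate $G$-map $\Psi := \pi \times \varphi : X \to Y \times Z$ defined by $\Psi(x) = (\pi(x), \varphi(x))$, and to show both that $\Psi$ is a $G$-isomorphism onto its image and that the pushforward $\alpha := \Psi_{*}\nu$ equals the product measure $\eta \times \zeta$. Together these yield the desired $G$-isomorphism $(X,\nu) \simeq (Y \times Z, \eta \times \zeta)$.

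To show $\Psi$ is an isomorphism I would essentially rerun the argument of Corollary \ref{C:orthomaps}, noting that its hypotheses are symmetric in the two maps. Since $\mathrm{pr}_{Z} \circ \Psi = \varphi$ is relatively contractive, Theorem \ref{T:contractivecomp} implies that $\Psi$ itself is relatively contractive. Likewise, since $\mathrm{pr}_{Y} \circ \Psi = \pi$ is relatively measure-preserving, the analogous factorization result for relatively measure-preserving maps (stated immediately after Theorem \ref{T:contractivecomp}) implies $\Psi$ is relatively measure-preserving as well. Proposition \ref{P:bothrels} then forces $\Psi$ to be a $G$-isomorphism from $(X,\nu)$ onto $(Y \times Z, \alpha)$.

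To identify $\alpha$, observe that $\alpha$ is a joining of $(Y,\eta)$ and $(Z,\zeta)$ since its marginals are $\pi_{*}\nu = \eta$ and $\varphi_{*}\nu = \zeta$. Because $\Psi$ is a $G$-isomorphism and $\mathrm{pr}_{Y} \circ \Psi = \pi$, the projection $\mathrm{pr}_{Y} : (Y \times Z, \alpha) \to (Y,\eta)$ is $G$-conjugate to $\pi$ and hence inherits the property of being relatively measure-preserving. Since $(Y,\eta)$ is contractive and $(Z,\zeta)$ is measure-preserving, Corollary \ref{C:contractiveoppmp} applies (with $Y$ in the role of the contractive factor) and yields $\alpha = \eta \times \zeta$, completing the proof.

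There is no substantive obstacle; the argument is a direct combination of the factorization results for relatively contractive and relatively measure-preserving maps together with the joining uniqueness result. The only point requiring care is the ordering of the two halves: one must establish that $\Psi$ is an isomorphism \emph{before} identifying $\alpha$, since the transport of the relatively measure-preserving property of $\pi$ onto $\mathrm{pr}_{Y} : (Y \times Z, \alpha) \to (Y,\eta)$ depends on $\Psi$ being an isomorphism, and this transport is precisely what makes Corollary \ref{C:contractiveoppmp} applicable.
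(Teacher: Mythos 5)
Your proposal is correct and follows essentially the same route as the paper: the paper likewise applies Corollary \ref{C:orthomaps} (whose hypotheses are indeed symmetric in the two maps) to get that $\pi\times\varphi$ is a $G$-isomorphism, checks that the pushforward is a joining of $(Y,\eta)$ and $(Z,\zeta)$ whose projection to the contractive factor is relatively measure-preserving, and invokes Corollary \ref{C:contractiveoppmp} to conclude it is the independent joining. The only cosmetic difference is that the paper deduces the relative measure-preservation of the projection from the composition theorem for relatively measure-preserving maps rather than by transporting the property through the isomorphism, so your remark about the necessary ordering of the two halves is not actually forced.
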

\begin{proof}
By Corollary \ref{C:orthomaps} the map $\pi \times \varphi$ is a $G$-isomorphism of $(X,\nu)$ with $(Y \times Z, (\pi\times\varphi)_{*}\nu)$.  Now $(\mathrm{pr}_{Y})_{*}(\pi\times\varphi)_{*}\nu = \pi_{*}\nu = \eta$ and likewise $(\mathrm{pr}_{Z})_{*}(\pi\times\varphi)_{*}\nu = \zeta$ so $(\pi\times\varphi)_{*}\nu$ is a joining of $(Y,\eta)$ and $(Z,\zeta)$.  Since $\pi$ is relatively measure-preserving and $\pi = \mathrm{pr}_{X} \circ (\pi \times \varphi)$ we have that $\mathrm{pr}_{X}$ is relatively measure-preserving.  The previous corollary then says that it is the independent joining.
\end{proof}

\begin{corollary}
Let $(X,\nu)$ be a contractive $G$-space and $\pi : (X,\nu) \to (Y,\eta)$ a $G$-map of $G$-spaces.  Then the only joining of $(X,\nu)$ and $(Y,\eta)$ such that the projection to $X$ is relatively measure-preserving is the joining $(X\times Y,\tilde{\pi}_{*}\nu)$ where $\tilde{\pi}(x) = (x,\pi(x))$.
\end{corollary}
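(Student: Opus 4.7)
The plan is to invoke Theorem \ref{T:contractivejoin} for uniqueness and to construct the joining $\tilde{\pi}_{*}\nu$ explicitly, verifying the two required properties: that it is a joining and that projection to $X$ is relatively measure-preserving.

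First I would check that $\alpha := \tilde{\pi}_{*}\nu$ is a joining. Since $\mathrm{pr}_{X} \circ \tilde{\pi} = \mathrm{id}_{X}$, the pushforward $(\mathrm{pr}_{X})_{*}\alpha = \nu$; since $\mathrm{pr}_{Y} \circ \tilde{\pi} = \pi$, the pushforward $(\mathrm{pr}_{Y})_{*}\alpha = \pi_{*}\nu = \eta$. Quasi-invariance of $\alpha$ under the diagonal $G$-action follows from the $G$-equivariance of $\tilde{\pi}$ (which is immediate from $G$-equivariance of $\pi$) together with quasi-invariance of $\nu$.

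Next I would compute the disintegration of $\alpha$ over $\nu$ via the projection to $X$. Since $\alpha$ is supported on the graph of $\pi$, the fiber of $\mathrm{pr}_{X}$ over almost every $x$ contains only the single point $(x,\pi(x))$, so by uniqueness of disintegration $D_{\mathrm{pr}_{X}}(x) = \delta_{(x,\pi(x))}$. To verify this map is $G$-equivariant (i.e.~that $\mathrm{pr}_{X}$ is relatively measure-preserving), I would compute
\[
D_{\mathrm{pr}_{X}}(gx) = \delta_{(gx,\pi(gx))} = \delta_{(gx, g\pi(x))} = g\delta_{(x,\pi(x))} = g D_{\mathrm{pr}_{X}}(x),
\]
using $G$-equivariance of $\pi$.

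Finally, Theorem \ref{T:contractivejoin} guarantees that any joining of $(X,\nu)$ and $(Y,\eta)$ with relatively measure-preserving projection to $X$ must agree with the one just constructed. There is no real obstacle here since the heavy lifting (uniqueness) is already packaged in Theorem \ref{T:contractivejoin}; the only task is the routine verification that $\tilde{\pi}_{*}\nu$ is indeed a valid candidate with the required properties.
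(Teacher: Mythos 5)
Your proposal is correct and follows essentially the same route as the paper: compute the disintegration of $\tilde{\pi}_{*}\nu$ over $\nu$ as the point masses $\delta_{(x,\pi(x))}$, check $G$-equivariance using equivariance of $\pi$, and invoke Theorem \ref{T:contractivejoin} for uniqueness. The only difference is that you also spell out the routine verification that $\tilde{\pi}_{*}\nu$ is a joining, which the paper leaves implicit.
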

\begin{proof}
Let $D(x)$ be the disintegration of $\tilde{\pi}_{*}\nu$ over $\nu$.  Then $D(x)$ is supported on $\{ x \} \times Y \cap \mathrm{supp}~\tilde{\pi}_{*}\nu = \{ (x,\pi(x)) \}$.  Therefore $D(x) = \delta_{(x,\pi(x))}$.  So $D(gx) = \delta_{(gx,\pi(gx))} = \delta_{g(x,\pi(x))} = g\delta_{(x,\pi(x))} = gD(x)$.  By the previous theorem this is then the unique joining with projection to $X$ being relatively measure-preserving.
\end{proof}

More generally:
\begin{theorem}\label{T:moregeneral}
Let $(X,\nu)$ be a contractive $G$-space and $\pi : (X,\nu) \to (Y,\eta)$ a $G$-map of $G$-spaces.  Let $\zeta \in P(X\times Y)$ be a joining of $(X,\nu)$ and $(Y,\eta^{\prime})$ for some $\eta^{\prime}$ absolutely continuous with respect to $\eta$ such that the projection to $X$ of $\zeta$ to $\nu$ is relatively measure-preserving.  Then $\zeta = \tilde{\pi}_{*}\nu$ where $\tilde{\pi}(x) = (x,\pi(x))$ and in particular, $\eta^{\prime} = \eta$.
\end{theorem}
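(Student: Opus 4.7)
The plan is to show that the disintegration fibers $\mu_x$ appearing in $D_{p_X}(x) = \delta_x \times \mu_x$ satisfy $\mu_x = \delta_{\pi(x)}$ for $\nu$-a.e.~$x$; this simultaneously forces $\zeta = \tilde{\pi}_*\nu$ and $\eta' = \pi_*\nu = \eta$.

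First I would observe that $(Y,\eta)$ is contractive, hence ergodic, by Corollary \ref{C:factorsstaycontractive}. Since $\eta'$ is $G$-quasi-invariant as the $Y$-marginal of the quasi-invariant $\zeta$, its support is a $G$-invariant Borel subset of positive $\eta$-measure, hence $\eta$-conull by ergodicity; combined with $\eta' \ll \eta$ this forces $\eta' \sim \eta$. The relatively measure-preserving property of $p_X$ is equivalent, via disintegration, to the almost sure equivariance $\mu_{gx} = g\mu_x$, so $\mu: x \mapsto \mu_x$ is a $G$-map $(X,\nu) \to (P(Y), \mu_*\nu)$.

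Next I would pass, via Mackey's theorem applied to the combined $G$-map $(\pi,\mu) : X \to Y \times P(Y)$, to a joint compact-metric point realization in which both $\pi$ and $\mu$ (with $P(Y)$ in its weak-$*$ topology) are continuous $G$-equivariant maps. Every point realization of $(X,\nu)$ is contractible by the Furstenberg--Glasner theorem, so for any chosen $x_0 \in X$ there exist $g_n \in G$ with $g_n\nu \to \delta_{x_0}$. Continuity of pushforward gives $g_n\eta = \pi_*(g_n\nu) \to \delta_{\pi(x_0)}$; writing $\eta' = h\eta$ with $h \in L^1(Y,\eta)$, and noting that $\eta(Y \setminus g_n^{-1}U) \to 0$ for any open $U \ni \pi(x_0)$, absolute continuity of $h\,d\eta$ with respect to $\eta$ yields $g_n\eta'(U) = \int_{g_n^{-1}U} h\,d\eta \to 1$, so $g_n\eta' \to \delta_{\pi(x_0)}$. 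Since $g_n\zeta \in P(X\times Y)$ has marginals converging to the point masses $\delta_{x_0}$ and $\delta_{\pi(x_0)}$, every weak-$*$ subsequential limit is the unique probability with those marginals, forcing $g_n\zeta \to \delta_{(x_0,\pi(x_0))}$.

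To conclude, for any $\phi \in C(X)$ and $\psi \in C(Y)$ the function $H_\psi(x) := \mu_x(\psi)$ is continuous (composition of the continuous $\mu$ with weak-$*$ evaluation), and the disintegration of $g_n\zeta$ gives
$$g_n\zeta(\phi\otimes\psi) = \int \phi(x)\mu_x(\psi)\,d(g_n\nu)(x) = (g_n\nu)(\phi \cdot H_\psi) \to \phi(x_0)\mu_{x_0}(\psi),$$
whereas the joint convergence from the previous paragraph gives the same limit as $\phi(x_0)\psi(\pi(x_0))$. Therefore $\mu_{x_0}(\psi) = \psi(\pi(x_0))$ for every $\psi \in C(Y)$, so $\mu_{x_0} = \delta_{\pi(x_0)}$ at every $x_0$ in the realization, yielding $\zeta = \int \delta_{(x,\pi(x))}\,d\nu = \tilde{\pi}_*\nu$ and $\eta' = \eta$. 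The main technical obstacle will be arranging the joint point realization in which $\mu$ is continuous; once that is in place, the remainder is a direct weak-$*$ limit computation exploiting the contractibility of $(X,\nu)$ and the $L^1$-integrability of $h = d\eta'/d\eta$.
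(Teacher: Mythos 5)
Your proof is correct, and it runs on the same engine as the paper's argument: equivariance of the fiber measures $\mu_{gx}=g\mu_x$ coming from relative measure preservation of the projection, contraction $g_n\nu\to\delta_{x_0}$ in a compact model, and the coincidence of the limits of $g_n\eta$ and $g_n\eta'$ at $\delta_{\pi(x_0)}$, which together force $\mu_{x_0}=\delta_{\pi(x_0)}$. The execution differs in three places, each in the direction of more rigor. Where the paper forms the single $L^\infty$ function $F(x)=f(\pi(x))-\mu_x(f)$, contracts to a near-maximizer of $|F|$, and concludes from $g_n\nu(F)\to 0$ that $\|F\|$ is small, you instead test $g_n\zeta$ against product functions and identify the limit $\delta_{(x_0,\pi(x_0))}$ purely from its marginals; this sidesteps the step $g_n\nu(F)\to F(x_0)$, which for a merely measurable $F$ genuinely requires the compact-model continuity that you arrange explicitly and that the paper leaves implicit. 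You also prove $g_n\eta'\to\delta_{\pi(x_0)}$ directly from $\eta'=h\,\eta$ with $h\in L^1(Y,\eta)$ and absolute continuity of the integral, rather than invoking Corollary \ref{C:contractiveuniqueCS}; this is more elementary and makes transparent that only $\eta'\ll\eta$ is used --- which incidentally means your opening paragraph upgrading $\eta'\ll\eta$ to equivalence via ergodicity of $(Y,\eta)$, while correct, is never needed afterwards. Finally, the joint compact model in which $\pi$ and $x\mapsto\mu_x$ are simultaneously continuous is obtained exactly as you indicate, by applying Mackey's theorem to the Borel $G$-map into $Y_0\times P(Y_0)$ for a compact model $Y_0$ of $Y$; this is the same device the paper uses in the proof of Theorem \ref{T:contractiveexttopo}, so the step you flag as the main obstacle is standard and not a gap. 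The only caveat worth recording is that the continuous version of $x\mapsto\mu_x$ satisfies $\mu_{gx}=g\mu_x$ only for almost every $x$ for each fixed $g$; since your identity $g_n\zeta(\phi\otimes\psi)=(g_n\nu)(\phi\cdot H_\psi)$ uses this relation only under an integral against a measure equivalent to $\nu$, the argument goes through unchanged.
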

\begin{proof}
Let $D$ be the disintegration of $\zeta$ over $\nu$.  Then $D(x) = \delta_{x} \times \zeta_{x}$ for some $\zeta_{x} \in P(Y)$ for almost every $x$.  Note that $D(gx) = gD(x)$  for $g \in G$ since the projection is relatively measure-preserving and therefore $\zeta_{gx} = g\zeta_{x}$ for all $g \in G$.
Let $f \in C(Y)$.  Define $F \in L^{\infty}(X,\nu)$ by
\[
F(x) = f(\pi(x)) - \zeta_{x}(f).
\]

Let $\epsilon > 0$ and take $x_{0} \in X$ such that $|F(x_{0})| > \| F \|_{L^{\infty}(X,\nu)} - \epsilon$.
Since $(X,\nu)$ is contractive, there exists $g_{n} \in G$ such that $g_{n}\nu \to \delta_{x_{0}}$.
Observe that, using that $\zeta_{gx} = g\zeta_{x}$,
\begin{align*}
g_{n}\nu(F) &= \int_{X} f(\pi(g_{n}x)) - \zeta_{g_{n}x}(f)~d\nu(x)
= \int_{X} f(g_{n}\pi(x)) - g_{n}\zeta_{x}(f)~d\nu(x)
= g_{n}\eta(f) - g_{n}\eta^{\prime}(f)
\end{align*}
since $\int_{X} \zeta_{x}~d\nu(x) = \eta^{\prime}$.

Now $\eta^{\prime}$ is absolutely continuous with respect to $\eta$ and $g_{n}\eta = \pi_{*}g_{n}\nu \to \pi_{*}\delta_{x_{0}} = \delta_{\pi(x_{0})}$.  Since $(Y,\eta)$ is contractive, being a factor of a contractive space, by Corollary \ref{C:contractiveuniqueCS} (the proof of which goes through even when $\eta^{\prime}$ is only absolutely continuous with respect to and not necessarily in the same measure class as $\eta$), $g_{n}\eta{\prime} \to \delta_{\pi(x_{0})}$ also.  Therefore
\[
g_{n}\nu(F) = g_{n}\eta(f) - g_{n}\eta^{\prime}(f) \to f(\pi(x_{0})) - f(\pi(x_{0})) = 0
\]
since $f \in C(Y)$.  So we have that $\| F \| < \epsilon$.  This holds for all $\epsilon > 0$ so $F(x) = 0$ almost surely.  As this holds for all $f \in C(Y)$ we then have that $\zeta_{x} = \delta_{\pi(x)}$ almost surely.  This means that $D(x) = \delta_{x} \times \delta_{\pi(x)} = \delta_{\tilde{\pi}(x)}$ almost surely so $\zeta = \tilde{\pi}_{*}\nu$ as claimed.  Since $\mathrm{proj}_{Y}~\tilde{\pi}_{*}\nu = \pi_{*}\nu = \eta$, then $\eta^{\prime} = \mathrm{proj}_{Y}~\zeta = \eta$.
\end{proof}

We also obtain a special case of a result of Furstenberg and Glasner.  Proposition 3.1 in \cite{FG10} states that there is a unique stationary joining between a $G$-boundary and an arbitrary $G$-space; we obtain another proof of this fact when the $G$-space is measure-preserving:
\begin{corollary}[Furstenberg-Glasner \cite{FG10}]
Let $G$ be a group and $\mu \in P(G)$ a probability measure on $G$.  Let $(B,\beta)$ be the $(G,\mu)$-boundary and $(X,\nu)$ a measure-preserving $G$-space.  Then the only joining $(B\times X,\alpha)$ of $(B,\beta)$ and $(X,\nu)$ such that $\mu * \alpha = \alpha$ is the independent joining.
\end{corollary}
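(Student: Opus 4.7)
The strategy is to invoke Corollary \ref{C:contractiveoppmp}: since $(B,\beta)$ is contractive (by Jaworski's theorem stated earlier) and $(X,\nu)$ is measure-preserving, the corollary singles out $\beta \times \nu$ as the unique joining of $(B,\beta)$ and $(X,\nu)$ for which $\mathrm{pr}_{B}$ is relatively measure-preserving. Note that $\beta \times \nu$ is itself a $\mu$-stationary joining, since $\mu * \beta = \beta$ by definition of the $(G,\mu)$-boundary and $\mu * \nu = \nu$ by $G$-invariance of $\nu$. Thus the task reduces to showing that $\mu$-stationarity of $\alpha$ forces $\mathrm{pr}_{B} : (B \times X, \alpha) \to (B, \beta)$ to be relatively measure-preserving.

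To carry this out, I would disintegrate $\alpha = \int_{B} (\delta_{b} \times \alpha^{b})\, d\beta(b)$ with $\alpha^{b} \in P(X)$, so that $D_{\mathrm{pr}_{B}}(b) = \delta_{b} \times \alpha^{b}$. The map $\mathrm{pr}_{B}$ is relatively measure-preserving precisely when the map $F : B \to P(X)$ defined by $F(b) = \alpha^{b}$ is $G$-equivariant, that is, $F(gb) = gF(b)$ for $\beta$-a.e.\ $b$. Disintegrating both sides of $\mu * \alpha = \alpha$ over $\beta$ and using $\mu * \beta = \beta$ together with uniqueness of disintegration yields the harmonicity relation
\[
F(b) = \int_{G} gF(g^{-1}b)\, \frac{dg\beta}{d\beta}(b)\, d\mu(g)
\]
valid for $\beta$-a.e.\ $b$; that is, $F$ is a bounded $\mu$-harmonic map realized on the Poisson boundary.

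The final step, which is the principal obstacle, is to extract $G$-equivariance of $F$ from this harmonicity. Here I would exploit the $G$-invariance of $\nu$ via the tail realization of $(B,\beta)$: for $\mu^{\mathbb{N}}$-a.e.\ sample path $\omega = (\omega_{n}) \in G^{\mathbb{N}}$ with boundary image $b(\omega) \in B$, a backward martingale argument applied to the joint random walk on $B \times X$ gives $(\omega_{1} \cdots \omega_{n})_{*}\alpha \to \delta_{b(\omega)} \times \alpha^{b(\omega)}$ in weak*. The $X$-marginal of $(\omega_{1} \cdots \omega_{n})_{*}\alpha$ equals $(\omega_{1} \cdots \omega_{n})\nu = \nu$ for every $n$ by $G$-invariance, so passing to the limit forces $\alpha^{b(\omega)} = \nu$ for $\mu^{\mathbb{N}}$-a.e.\ $\omega$. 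Since $b_{*}\mu^{\mathbb{N}} = \beta$, we conclude $\alpha^{b} = \nu$ for $\beta$-a.e.\ $b$, so $F$ is the (trivially $G$-equivariant) constant map $b \mapsto \nu$, and Corollary \ref{C:contractiveoppmp} identifies $\alpha = \beta \times \nu$.
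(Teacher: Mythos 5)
Your proposal is correct and follows essentially the same route as the paper: reduce to showing $\mathrm{pr}_{B}$ is relatively measure-preserving and then invoke Corollary \ref{C:contractiveoppmp}, with the key step being the martingale decomposition $\alpha = \int \alpha_{\omega}\,d\mu^{\mathbb{N}}(\omega)$ of the stationary joining, whose conditional measures have point-mass $B$-marginal and $X$-marginal identically $\nu$. The intermediate harmonicity relation you derive is correct but unused, since your final backward-martingale step (which is exactly the paper's argument) bypasses it entirely.
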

\begin{proof}
Let $\pi : G^{\mathbb{N}} \to B$ be the boundary map (see \cite{BS04} section 2), meaning that $\beta_{\omega} = \lim_{n} \omega_{1}\cdots\omega_{n}\beta = \delta_{\pi(\omega)}$ $\mu^{\mathbb{N}}$-almost surely and $\pi_{*}\mu^{\mathbb{N}} = \beta$.  Since $\alpha$ is $\mu$-stationary, $\alpha = \int \alpha_{\omega}~d\mu^{\mathbb{N}}(\omega)$.  Now $(\mathrm{proj}_{B})_{*}\alpha_{\omega} = \beta_{\omega} = \delta_{\pi(\omega)}$ and $(\mathrm{proj}_{X})_{*}\alpha_{\omega} = \nu_{\omega} = \nu$ since $(X,\nu)$ is measure-preserving.  Therefore $\alpha_{\omega} = \delta_{\pi(\omega)} \times \nu$ and since $\pi_{*}\mu^{\mathbb{N}} = \beta$ then the disintegration of $\alpha$ over $\beta$ is $D(b) = \delta_{b} \times \nu$ which is $G$-equivariant.  Hence the projection to $B$ is relatively measure-preserving so the claim follows by the previous corollaries.
\end{proof}

\subsection{Relatively Contractive Maps and Finite Index Subgroups}

Relative contractiveness is not affected by passage to finite index subgroups.  The proof of the following fact can be found in \cite{CP14} and is a relatively easy exercise for the reader.

\begin{theorem}\label{T:rcfi}
Let $G$ be a locally compact second countable group and $H < G$ be a finite index subgroup.  Let $\pi : (X,\nu) \to (Y,\eta)$ be a relatively contractive $G$-map of ergodic $G$-spaces.  Then, restricting the actions to $H$ makes $\pi$ a relatively contractive $H$-map.
\end{theorem}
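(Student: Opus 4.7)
I will use the topological characterization of Theorem \ref{T:contractiveexttopo}: working in a continuous compact model for $\pi$, it suffices to verify that for almost every $y \in Y$ and every $x$ in the support of $D_\pi(y)$ there is a sequence $h_n \in H$ with $D_\pi^{(h_n)}(y) \to \delta_x$ in weak*. Fix a finite right-coset transversal $G = \bigsqcup_{j=1}^{k} H g_j$ with $g_1 = e$. A direct computation from the definition $D_\pi^{(g)}(y) = g^{-1} D_\pi(gy)$ yields the key identity
\[
D_\pi^{(h g_j)}(y) = g_j^{-1} D_\pi^{(h)}(g_j y) \quad \text{for all } h \in H, \; j \in \{1,\dots,k\}.
\]

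By the $G$-relative contractiveness of $\pi$ and Theorem \ref{T:contractiveexttopo}, for a.e.\ $y$ and every $x$ in the support of $D_\pi(y)$ there exist $g_n \in G$ with $D_\pi^{(g_n)}(y) \to \delta_x$. Decomposing $g_n = h_n g_{j_n}$ and applying the pigeonhole principle, a subsequence satisfies $j_n \equiv j$ for some fixed $j$, and the identity above transforms the convergence into $D_\pi^{(h_n)}(g_j y) \to \delta_{g_j x}$ in weak*. Defining the Borel $H$-equivariant map $\Lambda_H$ sending $y$ to the closed set
\[
\Lambda_H(y) = \{x \in X : \delta_x \text{ is a weak* limit of some } D_\pi^{(h_n)}(y), \; h_n \in H\},
\]
the previous sentence establishes the covering identity $\mathrm{supp}\, D_\pi(y) = \bigcup_{j=1}^{k} g_j^{-1} \Lambda_H(g_j y)$ for almost every $y$, and the reverse direction of Theorem \ref{T:contractiveexttopo} reduces the theorem to showing $\Lambda_H(y) = \mathrm{supp}\, D_\pi(y)$ a.s.

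The main obstacle is precisely this last step: the argument above only produces $H$-approximation at the shifted base points $g_j y$, not at $y$ itself. To bridge the gap I would exploit the $G$-ergodicity of $(Y,\eta)$ together with the finiteness of $[G:H]$. The space $Y$ decomposes into finitely many $H$-ergodic components transitively permuted by $G$ via the transversal, and the set $E = \{y : \Lambda_H(y) = \mathrm{supp}\, D_\pi(y)\}$ is $H$-invariant and hence is a union of entire $H$-ergodic components. Were $E^c$ to have positive measure, it would be conull in some $H$-ergodic component; feeding this into the covering identity and choosing a continuous test function on the compact model supported near a point of $\mathrm{supp}\, D_\pi(y) \setminus \Lambda_H(y)$, the quantitative bound produced in the proof of Theorem \ref{T:contractiveexttopo} would contradict the $G$-relative contractiveness hypothesis because the transitive $G$-permutation of components prevents all of the $g_j y$ from avoiding $E$. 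This ergodic-combinatorial closing argument is the technically delicate part; all preceding steps are mechanical applications of the cocycle-like identity and the compact-model reduction.
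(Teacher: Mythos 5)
Your first half is sound: the identity $D_{\pi}^{(hg_{j})}(y) = g_{j}^{-1}D_{\pi}^{(h)}(g_{j}y)$ is correct, and together with Theorem \ref{T:contractiveexttopo} and the pigeonhole principle it does give the covering $\mathrm{supp}\,D_{\pi}(y) = \bigcup_{j} g_{j}^{-1}\Lambda_{H}(g_{j}y)$ for almost every $y$. (A minor caveat: a $G$-translate of an $H$-invariant set is only $gHg^{-1}$-invariant, so the assertion that $G$ transitively permutes the $H$-ergodic components requires first passing to the normal core of $H$; this is harmless since proving the result for a smaller subgroup is stronger.) The genuine gap is the closing argument. You propose to derive a contradiction from the fact that, for $y$ in a bad component, ``the transitive $G$-permutation of components prevents all of the $g_{j}y$ from avoiding $E$,'' where $E = \{y : \Lambda_{H}(y) = \mathrm{supp}\,D_{\pi}(y)\}$. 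First, nothing you have established shows $\eta(E) > 0$: the covering identity only says each point of the support is reached along \emph{some} coset, which is entirely consistent with no single coset ever reaching the whole fiber, i.e.\ with $E$ being null. Second, and more decisively, even granting $\eta(E)>0$ so that some $g_{j}y$ lies in $E$, there is no tension with $y \notin E$: if $g_{j}y \in E$ then $g_{j}^{-1}\Lambda_{H}(g_{j}y) = \mathrm{supp}\,D_{\pi}(y)$ and the covering identity is satisfied trivially by that one term, telling you nothing about $\Lambda_{H}(y)$ itself. The covering identity therefore cannot distinguish the desired conclusion from the scenario in which $E$ is a proper, nonempty union of $H$-ergodic components, and the ``quantitative bound from the proof of Theorem \ref{T:contractiveexttopo}'' is never actually brought to bear on this scenario.

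For comparison: the paper itself omits the proof and refers to \cite{CP14}, but its own machinery points to an argument that sidesteps your obstruction entirely. Identify $X \times G/H$ (diagonal action, $G/H$ with uniform measure) with the induced $G$-space $G \times_{H} X$ via $[g,x] \mapsto (gH, gx)$, and likewise for $Y$; then $\pi \times \mathrm{id}_{G/H}$ is relatively contractive as a $G$-map by the example following Example \ref{E:examplecont} (product with an auxiliary $G$-space), and under the identification it is exactly the induced map of $\pi|_{H}$. The theorem on inducing relatively contractive maps --- whose proof uses only the fundamental-domain and cocycle structure and applies verbatim to a finite transversal of an open finite-index subgroup --- then yields that $\pi$ is relatively contractive as an $H$-map. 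If you wish to rescue a direct coset-decomposition argument, the missing ingredient is a mechanism forcing $\Lambda_{H}(y)$ to equal the support on \emph{every} $H$-ergodic component rather than on at least one; the covering identity alone does not supply it.
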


\subsection{Contractive Actions and Lattices}

The following is a generalization of Proposition 3.7 in \cite{CS14} (which shows the same result only for Poisson boundaries):
\begin{theorem}\label{T:contractivelattice}
Let $G$ be a locally compact second countable group and $\Gamma < G$ a lattice.  Let $(X,\nu)$ be a contractive $(G,\mu)$-space (meaning that $\mu * \nu = \nu$) for some symmetric $\mu \in P(G)$ such that the support of $\mu$ generates $G$.  Then the restriction of the $G$-action to $\Gamma$ makes $(X,\nu)$ a contractive $\Gamma$-space.
\end{theorem}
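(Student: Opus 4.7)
The plan is to reduce to Proposition~3.7 of~\cite{CS14}, which is exactly the present statement in the special case where $(X,\nu)$ is the $(G,\mu)$-Poisson boundary, by representing bounded $\mu$-harmonic functions on $G$ via the Furstenberg--Poisson formula. Fix a measurable $B \subseteq X$ with $\nu(B) > 0$; the goal is $\sup_{\gamma \in \Gamma} \nu(\gamma B) = 1$. Consider $\widehat{F} : G \to [0,1]$ defined by $\widehat{F}(g) = g\nu(B)$, which is Jaworski's $\widehat{\bbone_{B}}$. Stationarity $\mu * \nu = \nu$ directly gives that $\widehat{F}$ is right-$\mu$-harmonic on $G$, since $\widehat{F}(g) = g\nu(B) = \int gh\nu(B)\,d\mu(h) = \int \widehat{F}(gh)\,d\mu(h)$. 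Because $\mu$ is nonsingular with respect to Haar, some convolution power of $\mu$ has an $L^{1}$ density, and iterating the harmonicity identity realizes $\widehat{F}$ as the convolution of an $L^{\infty}$ function with an $L^{1}$ density, hence continuous on $G$; the $G$-contractiveness of $(X,\nu)$, via Jaworski's isometry characterization stated earlier in the paper, then gives $\|\widehat{F}\|_{L^{\infty}(G,\Haar)} = \|\bbone_{B}\|_{\infty} = 1$.

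Let $(B_{\mu},\beta_{\mu})$ denote the $(G,\mu)$-Poisson boundary. The Furstenberg--Poisson representation provides $\phi \in L^{\infty}(B_{\mu},\beta_{\mu})$ with $\widehat{F}(g) = \int \phi\,d(g\beta_{\mu})$ for every $g \in G$, and the Jaworski isometry applied to the contractive $G$-space $(B_{\mu},\beta_{\mu})$ yields $\|\phi\|_{L^{\infty}(\beta_{\mu})} = 1$. Now Proposition~3.7 of~\cite{CS14} asserts precisely that $(B_{\mu},\beta_{\mu})$ is a contractive $\Gamma$-space, so Jaworski's isometry applied to the restricted $\Gamma$-action forces $\sup_{\gamma \in \Gamma} \gamma\beta_{\mu}(\phi) = \|\phi\|_{\infty} = 1$; since $\gamma\beta_{\mu}(\phi) = \widehat{F}(\gamma) = \gamma\nu(B) = \nu(\gamma^{-1}B)$ and $\Gamma = \Gamma^{-1}$, this gives $\sup_{\gamma \in \Gamma} \nu(\gamma B) = 1$ as required.

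The main technical obstacle, in my view, is establishing the continuity of $\widehat{F}$ --- the step that lets the essential supremum over Haar on $G$ be detected by values at the countable discrete set $\Gamma$ --- which is precisely where the nonsingularity hypothesis on $\mu$ enters; beyond this point the argument is a direct packaging of the Furstenberg--Poisson correspondence with the Creutz--Shalom boundary result.
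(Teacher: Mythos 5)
Your reduction to the Poisson-boundary case via the Poisson transform is a genuinely different route from the one the paper takes, and the chain of isometries itself is sound: $\|\bbone_{B}\|_{L^{\infty}(\nu)} = \|\widehat{F}\|_{L^{\infty}(G,\mathrm{Haar})} = \|\phi\|_{L^{\infty}(\beta_{\mu})} = \sup_{\gamma}|\gamma\beta_{\mu}(\phi)| = \sup_{\gamma}\nu(\gamma^{-1}B)$, with the first equality coming from $G$-contractiveness of $(X,\nu)$, the second from the isometric Poisson representation, and the third from the Creutz--Shalom result that $\Gamma \actson (B_{\mu},\beta_{\mu})$ is contractive. There is, however, one concrete gap: you invoke ``$\mu$ is nonsingular with respect to Haar'' to make $\widehat{F}$ continuous, but that is not among the hypotheses --- the theorem assumes only that $\mu$ is symmetric with generating support. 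Without spread-out-ness you cannot upgrade the Furstenberg--Poisson identity $\widehat{F}(g) = g\beta_{\mu}(\phi)$ from Haar-almost-every $g$ to all $g$, and $\Gamma$ is Haar-null, so the essential supremum over $G$ says nothing about the values $\widehat{F}(\gamma)$; this is exactly the obstacle you flag, and assuming a hypothesis that is not given does not resolve it. (A minor further point: nonsingularity of some $\mu^{*n}$ yields only a nonzero absolutely continuous part; continuity of $\widehat{F}$ then follows because the singular part of $\mu^{*nk}$ decays geometrically in $k$, not because $\mu^{*n}$ literally has an $L^{1}$ density.)

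The paper's proof sidesteps this issue entirely by never evaluating a harmonic function at points of $\Gamma$. Lemma \ref{L:contractivelattice}'s companion, Lemma \ref{L:contractivestationary}, uses martingale convergence on the path space to show that with positive $\mu^{\mathbb{N}}$-probability one has $\lim_{n}\nu(\omega_{n}^{-1}\cdots\omega_{1}^{-1}A) > 1-\epsilon$, while the Random Ergodic Theorem applied to $G \actson (G/\Gamma, m)$ shows that almost every such product returns infinitely often to a bounded window $K_{0}\Gamma$ (this is where the symmetry of $\mu$ enters). Intersecting the two events produces elements $k_{n}\gamma_{n}$ with $k_{n} \in K_{0}$, $\gamma_{n} \in \Gamma$ and $\nu(k_{n}\gamma_{n}B) \to 1$; passing to a convergent subsequence $k_{n_{j}} \to k_{\infty}$ and using continuity of the $G$-action on $L^{1}(X,\nu)$ strips off the compact part and yields $\nu(\gamma_{n_{j}}B) \to 1$. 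Your argument, where it applies (admissible $\mu$, or discrete $G$), is shorter and cleanly isolates the role of the boundary; but to prove the theorem as stated you would need either to add the spread-out hypothesis explicitly or to replace the pointwise Poisson-representation step by an argument, like the paper's, that uses only Haar-essential information about $\widehat{F}$.
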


\begin{lemma}\label{L:contractivestationary}
Let $G$ be a locally compact second countable group and let $(X,\nu)$ be a contractive $(G,\mu)$-space for some $\mu \in P(G)$ such that the support of $\mu$ generates $G$.  Let $A \subseteq X$ be a measurable set with $\nu(A) > 0$.  Then for every $\epsilon > 0$,
\[
\mu^{\mathbb{N}}(\{ (\omega_{1},\omega_{2},\ldots) \in G^{\mathbb{N}} : \lim_{n \to \infty} \nu(\omega_{n}^{-1}\cdots\omega_{1}^{-1} A) > 1 - \epsilon \}) > 0.
\]
\end{lemma}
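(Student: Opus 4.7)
The plan is to realise the random quantity $\nu(\omega_n^{-1}\cdots\omega_1^{-1}A) = g_n(\omega)\nu(A)$, where $g_n(\omega) = \omega_1\omega_2\cdots\omega_n$, as the values of a bounded right $\mu$-harmonic function evaluated along the $\mu$-random walk on $G$, and then to use contractivity to force the essential supremum of its almost-sure limit to equal one. Concretely, set $f_A(g) := g\nu(A) = \nu(g^{-1}A)$. Stationarity $\mu * \nu = \nu$ translates directly into right $\mu$-harmonicity of $f_A$ via
\[
\int_{G} f_A(g\gamma)\,d\mu(\gamma) \;=\; g(\mu * \nu)(A) \;=\; g\nu(A) \;=\; f_A(g),
\]
so $q_n(\omega) := f_A(g_n(\omega))$ is a $[0,1]$-valued martingale on $(G^{\mathbb{N}},\mu^{\mathbb{N}})$ with respect to the coordinate filtration. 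The martingale convergence theorem then produces the $\mu^{\mathbb{N}}$-a.s.\ limit $q_\infty(\omega) = \lim_n \nu(\omega_n^{-1}\cdots\omega_1^{-1}A) \in [0,1]$.

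It suffices to prove $\|q_\infty\|_{L^\infty(\mu^{\mathbb{N}})} = 1$, because that is precisely the statement that $\{\omega : q_\infty(\omega) > 1-\epsilon\}$ has positive $\mu^{\mathbb{N}}$-measure for every $\epsilon > 0$. Contractivity of $(X,\nu)$ gives $\sup_{g \in G} f_A(g) = \sup_g \nu(g^{-1}A) = 1$, so the remaining task is to match the pointwise supremum of the harmonic function $f_A$ with the essential supremum of its random-walk limit. For this I invoke the Furstenberg--Poisson correspondence for the $(G,\mu)$-Poisson boundary $(B,\beta)$: the map $f \mapsto \widehat{f}$ defined by the Poisson integral $f(g) = \int_B \widehat{f}\,d(g\beta)$ is an isometric bijection from bounded right $\mu$-harmonic functions on $G$ onto $L^\infty(B,\beta)$, and its inverse is realised along $\mu^{\mathbb{N}}$-a.e.\ trajectory by $\widehat{f}(\omega) = \lim_n f(g_n(\omega))$ (see \cite{BS04} Section 2). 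Applied to $f_A$, the latter identifies $q_\infty$ with the pull-back of $\widehat{f_A}$ under the measure-preserving boundary map $(G^{\mathbb{N}},\mu^{\mathbb{N}}) \to (B,\beta)$, yielding
\[
\|q_\infty\|_{L^\infty(\mu^{\mathbb{N}})} \;=\; \|\widehat{f_A}\|_{L^\infty(\beta)} \;=\; \sup_{g \in G} f_A(g) \;=\; 1.
\]

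The main obstacle will be marshalling the sup-norm side of the Furstenberg--Poisson isometry. The easy direction $\|\widehat{f_A}\|_\infty \leq \sup_g f_A(g)$ is immediate from the a.s.\ pointwise-limit representation of $\widehat{f_A}$, while the reverse inequality uses quasi-invariance of $\beta$ under $G$ (so that $L^\infty(g\beta)$ and $L^\infty(\beta)$ coincide as normed spaces) to give $|f_A(g)| = |\int \widehat{f_A}\,d(g\beta)| \leq \|\widehat{f_A}\|_{L^\infty(\beta)}$ for every $g \in G$. With both inequalities in hand, the content of the argument really lies in the translation of contractivity---originally phrased as $\sup_h \nu(hA) = 1$---into the sup-norm identity $\sup_g f_A(g) = 1$, which the isometry then converts into positive $\mu^{\mathbb{N}}$-measure for the event $\{q_\infty > 1-\epsilon\}$.
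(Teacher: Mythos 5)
Your proposal is correct and follows essentially the same route as the paper: define the bounded $\mu$-harmonic function $g \mapsto \nu(g^{-1}A)$, use contractivity to get that its supremum is $1$, apply the martingale convergence theorem along the random walk, and invoke the isometry between bounded $\mu$-harmonic functions and $L^{\infty}$ of the Poisson boundary to conclude that the almost-sure limit has essential supremum $1$. The only cosmetic difference is that you spell out both inequalities of the Furstenberg--Poisson isometry explicitly where the paper simply cites it.
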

\begin{proof}
Let $\varphi \in L^{\infty}(G)$ be defined by $\varphi(g) = \nu(g^{-1} A)$.  Then $\varphi$ is a $\mu$-harmonic nonnegative bounded function on $G$ since $\mu * \nu = \nu$.  As $G \actson (X,\nu)$ is contractive, $\| \varphi \|_{\infty} = \| \bbone_{A} \|_{\infty} = 1$ (because the map $L^{\infty}(X,\nu) \to L^{\infty}(G)$ is an isometry).  Define the function $f \in L^{\infty}(G^{\mathbb{N}})$ by
\[
f(\omega_{1},\omega_{2},\ldots) = \lim_{n \to \infty} \varphi(\omega_{1}\cdots\omega_{n})
\]
which exists $\mu^{\mathbb{N}}$-almost everywhere by the Martingale Convergence Theorem (in fact $f$ descends to an $L^{\infty}$-function on the Poisson boundary of $(G,\mu)$).  Then $f \geq 0$ and $\| f \|_{\infty} = \| \varphi \|_{\infty} = 1$ since the mapping between $L^{\infty}$ of the Poisson boundary and the harmonic functions on $G$ is an isometry.  Let $\epsilon > 0$.  Then
\[
\mu^{\mathbb{N}}(\{ (\omega_{1},\omega_{2},\ldots) \in G^{\mathbb{N}} : f(\omega_{1},\ldots) > 1 - \epsilon \}) > 0
\]
since $\| f \|_{\infty} = 1$.  Since
\[
f(\omega_{1},\ldots) = \lim_{n\to\infty} \varphi(\omega_{1}\cdots\omega_{n}) = \lim_{n\to\infty} \nu(\omega_{n}^{-1}\cdots\omega_{1}^{-1} A),
\]
this completes the proof.
\end{proof}

\begin{proof}[Proof of Theorem \ref{T:contractivelattice}]
Let $m$ be the invariant (Haar) probability measure on $G / \Gamma$.  Let $K_{0}$ be a bounded open subset of $G$.  Set $K = K_{0}\Gamma \subseteq G / \Gamma$.  Then $m(K) > 0$ since $K_{0}$ is open.  By the Random Ergodic Theorem (due to Kifer\cite{kifer} in general and Kakutani \cite{kakutani} in the measure-preserving case), for $m$-almost every $z \in G / \Gamma$ and $\mu^{\mathbb{N}}$-almost every $(\omega_{1},\omega_{2},\ldots)$ it holds that
\[
\lim_{N \to \infty} \frac{1}{N}\sum_{n=1}^{N} \bbone_{K}(\omega_{n}\cdots\omega_{1}z) = m(K) > 0.
\]
Pick $z \in G / \Gamma$ such that the above holds $\mu^{\mathbb{N}}$-almost everywhere.  Then $\omega_{n}\cdots\omega_{1}z \in K$ infinitely often $\mu^{\mathbb{N}}$-almost surely and so, as $\mu$ is symmetric, $\omega_{n}^{-1}\cdots\omega_{1}^{-1}z \in K$ infinitely often $\mu^{\mathbb{N}}$-almost surely.

Let $z_{0}$ be a representative of $z$ in $G$.  Let $B \subseteq X$ be a measurable set with $\nu(B) > 0$.  Set $A = z_{0}B$.  Fix $\epsilon > 0$.  Then $\nu(A) > 0$ since $\nu$ is quasi-invariant and so, by Lemma \ref{L:contractivestationary},
\[
\mu^{\mathbb{N}}(\{ (\omega_{1}, \omega_{2}, \ldots) \in G^{\mathbb{N}} : \lim_{n \to \infty} \nu(\omega_{n}^{-1}\cdots\omega_{1}^{-1} A) > 1 - \epsilon \}) > 0.
\]
As the intersection of a positive measure set with a measure one set is nonempty, there then exists $(\omega_{1},\omega_{2},\ldots)$ such that $\omega_{n}^{-1}\cdots\omega_{1}^{-1}z \in K$ infinitely often and $\lim_{n \to \infty} \nu(\omega_{n}^{-1}\cdots\omega_{1}^{-1} A) > 1 - \epsilon$.  Hence there exists $n$ such that $\omega_{n}^{-1}\cdots\omega_{1}^{-1}z \in K$ and $\nu(\omega_{n}^{-1}\cdots\omega_{1}^{-1} A) > 1 - 2\epsilon$.

\vspace{1pt}
Observe that $\omega_{n}^{-1}\cdots\omega_{1}^{-1}z_{0} \in K_{0}\Gamma$ since $z_{0} \in z\Gamma$ and $\omega_{n}^{-1}\cdots\omega_{1}^{-1}z \in K = K_{0}\Gamma$.  Write $\omega_{n}^{-1}\cdots\omega_{1}^{-1}z_{0} = k\gamma$ for some $k \in K_{0}$ and $\gamma \in \Gamma$.  Then
\[
1 - 2\epsilon < \nu(\omega_{n}^{-1}\cdots\omega_{1}^{-1} A) = \nu(\omega_{n}^{-1}\cdots\omega_{1}^{-1}z_{0} B) = \nu(k\gamma B).
\]
As this holds for all $\epsilon > 0$, there then exists sequences $\{ k_{n} \}$ in $K_{0}$ and $\{ \gamma_{n} \}$ in $\Gamma$ such that $\nu(k_{n}\gamma_{n} B) \to 1$.  As $K$ is bounded, $\overline{K}$ is compact so there exists a subsequence $\{ k_{n_{j}} \}$ such that $k_{n_{j}} \to k_{\infty} \in G$.

Set $C = X \setminus B$ and set $C_{j} = k_{n_{j}} \gamma_{n_{j}} C$.  Then $\nu(C_{j}) \to 0$.  Since $k_{n_{j}}^{-1} \to k_{\infty}^{-1}$ and $\nu(C_{j}) \to 0$, by the continuity of the $G$-action on $L^{1}(X,\nu)$, it follows that $\nu(k_{n_{j}}^{-1}C_{j}) \to 0$.  Therefore $\nu(\gamma_{n_{j}} C) \to 0$ meaning $\nu(\gamma_{n_{j}} B) \to 1$.  As $B$ was an arbitrary measurable set of positive measure, this shows that the $\Gamma$-action on $(X,\nu)$ is contractive.
\end{proof}

\subsection{Inducing Relatively Contractive Maps}

\begin{theorem}
Let $\Gamma < G$ be a lattice in a locally compact second countable group.  Let $\pi : (X,\nu) \to (Y,\eta)$ be a $\Gamma$-map of $\Gamma$-spaces and let $\Pi : G \times_{\Gamma} X \to G \times_{\Gamma} Y$ be the induced $G$-map of $G$-spaces.  Then $\pi$ is a relatively contractive $\Gamma$-map if and only if $\Pi$ is a relatively contractive $G$-map.
\end{theorem}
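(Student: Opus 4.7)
The plan is to carry out the proof inside the fundamental-domain realization of the induced action described in Section~\ref{sub:inducedaction}: identify $G \times_\Gamma X$ with $(F \times X, m \times \nu)$ and $G \times_\Gamma Y$ with $(F \times Y, m \times \eta)$, under the $G$-action $g \cdot (f,x) = (gf\alpha(g,f), \alpha(g,f)^{-1}x)$, in which $\Pi$ is realized as $\Phi(f,x) = (f,\pi(x))$. Since $\Phi^{-1}(f,y) = \{f\} \times \pi^{-1}(y)$, uniqueness of disintegration immediately gives $D_\Phi(f,y) = \delta_f \times D_\pi(y)$ almost surely.

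The heart of the argument is the identity
\begin{equation*}
D_\Phi^{(g)}(f,y) \;=\; \delta_f \times D_\pi^{(\alpha(g,f)^{-1})}(y).
\end{equation*}
To obtain it I would write $D_\Phi(g\cdot(f,y)) = \delta_{gf\alpha(g,f)} \times D_\pi(\alpha(g,f)^{-1}y)$ and apply $g^{-1}$: the first coordinate collapses to $\delta_f$ via the cocycle identity specialized to $\alpha(g^{-1}, gf\alpha(g,f)) = \alpha(g,f)^{-1}$ (itself a consequence of $\alpha(e,f)=e$), and the second coordinate becomes the $\Gamma$-translate $\alpha(g,f) D_\pi(\alpha(g,f)^{-1}y) = D_\pi^{(\alpha(g,f)^{-1})}(y)$ by definition. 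The key structural observation is then that for each fixed $f \in F$ the map $g \mapsto \alpha(g,f)^{-1}$ is surjective onto $\Gamma$: given $\gamma \in \Gamma$ and any $f_0 \in F$, setting $g = f_0\gamma^{-1}f^{-1}$ yields $gf\gamma = f_0 \in F$, so $\alpha(g,f) = \gamma$. Consequently, writing $B_f = \{x \in X : (f,x) \in B\}$,
\begin{equation*}
\sup_{g \in G} D_\Phi^{(g)}(f,y)(B) \;=\; \sup_{\gamma \in \Gamma} D_\pi^{(\gamma)}(y)(B_f).
\end{equation*}

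Both implications now fall out. If $\pi$ is relatively $\Gamma$-contractive, then for $(m \times \eta)$-a.e.\ $(f,y)$ and any Borel $B$ with $D_\Phi(f,y)(B) = D_\pi(y)(B_f) > 0$ the right-hand side above equals $1$, so $\Pi$ is relatively $G$-contractive. Conversely, if $\Pi$ is relatively $G$-contractive, Fubini supplies an $\eta$-conull set of $y$ for which some $f$ has $(f,y)$ in the hypothesis set, and applying the condition to cylinders $B = F \times A$ for Borel $A \subseteq X$ with $D_\pi(y)(A) > 0$ (so that $B_f = A$) gives $\sup_\gamma D_\pi^{(\gamma)}(y)(A) = 1$, i.e.\ $\pi$ is relatively $\Gamma$-contractive. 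The only real obstacle is careful bookkeeping with the non-product $G$-action on $F \times X$ and with the cocycle identities needed to simplify $g^{-1}D_\Phi(g\cdot(f,y))$; once the formula for $D_\Phi^{(g)}(f,y)$ is in hand, the surjectivity of $g \mapsto \alpha(g,f)^{-1}$ makes both directions immediate.
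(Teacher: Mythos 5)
Your proposal is correct and follows essentially the same route as the paper: pass to the fundamental-domain model $(F\times X, m\times\nu)$, compute $D_\Phi(f,y)=\delta_f\times D_\pi(y)$, and establish the key identity $D_\Phi^{(g)}(f,y)=\delta_f\times D_\pi^{(\alpha(g,f)^{-1})}(y)$ via the cocycle relation $\alpha(g^{-1},gf\alpha(g,f))=\alpha(g,f)^{-1}$. Your observation that $g\mapsto\alpha(g,f)^{-1}$ is onto $\Gamma$ (the paper uses the special case $g=f\gamma f^{-1}$) lets you run both implications directly from the set-theoretic definition rather than through the $L^\infty$-isometry and point-mass characterizations the paper invokes, but the substance is the same.
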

\begin{proof}
Assume first that $\Pi$ is relatively contractive.
Fix a fundamental domain $(F,m)$ for $G / \Gamma$ as in the induced action construction and let $\alpha : G \times F \to \Gamma$ be the associated cocycle for the $G$-action on $F \times X$.  Let $\Phi : (F \times X, m \times \nu) \to (F \times Y, m \times \eta)$ by $\Phi = \mathrm{id} \times \pi$.  Then $\Phi$ is isomorphic to $\Pi$ over the canonical isomorphisms $G \times_{\Gamma} X \simeq F \times X$ and $G \times_{\Gamma} Y \simeq F \times Y$ so $\Phi$ is relatively contractive.  Consider the disintegration map $D_{\Phi} : F \times Y \to P(F \times X)$.  Observe that for $(f,y) \in F \times Y$
\[
D_{\Phi}(f,y) = \delta_{f} \times D_{\pi}(y)
\]
since $\Phi = \mathrm{id} \times \pi$ and all the spaces have the product measure.  Now consider the conjugates of the disintegration map: for $g \in G$ and $(f,y) \in F \times Y$,
\begin{align*}
D_{\Phi}^{(g)}(f,y) &= g^{-1}D_{\Phi}(g(f,y)) = g^{-1}D_{\Phi}(gf\alpha(g,f),\alpha(g,f)^{-1}y) \\
&= g^{-1}(\delta_{gf\alpha(g,f)} \times D_{\pi}(\alpha(g,f)^{-1}y)) \\
&= \delta_{f} \times \alpha(g^{-1},gf\alpha(g,f))^{-1}D_{\pi}(\alpha(g,f)^{-1}y) \\
&= \delta_{f} \times D_{\pi}^{(\alpha(g,f)^{-1})}(y).
\end{align*}
Now take $r \in L^{\infty}(X,\nu)$ and define $q(f,x) = r(x)$.  Then for $m\times\eta$-almost every $(f,y)$
\[
\| q \|_{L^{\infty}(F \times X,D_{\Phi}(f,y))} = \| r \|_{L^{\infty}(X,D_{\pi}(y))}
\]
and since $\Phi$ is relatively contractive, for $m \times \eta$-almost every $(f,y)$ there exists $g_{n} \in G$ such that
\[
D_{\Phi}^{(g_{n})}(f,y)(q) \to \| q \|_{L^{\infty}(F \times X,D_{\Phi}(f,y))}.
\]
Therefore
\[
\delta_{f} \times D_{\pi}^{(\alpha(g_{n},f)^{-1})}(y)(q) \to \| r \|_{L^{\infty}(X,D_{\pi}(y))}
\]
and by construction of $q$ then
\[
D_{\pi}^{(\alpha(g_{n},f)^{-1})}(y)(r) \to \| r \|_{L^{\infty}(X,D_{\pi}(y))}.
\]
Hence for $\eta$-almost every $y$ there exists a sequence $\gamma_{n} = \alpha(g_{n},f)^{-1} \in \Gamma$ (outside of possibly a measure zero set, which $f$ is chosen is irrelevant) such that
\[
D_{\pi}^{(\gamma_{n})}(r) \to \| r \|_{L^{\infty}(X,D_{\pi}(y))}
\]
which means that $\pi$ is relatively contractive.

Now assume that $\pi$ is relatively contractive.  Let $x \in X$ and $f \in F$ and set $y = \pi(x)$.  As above,
\[
D_{\Phi}^{(g)}(f,y) = \delta_{f} \times D_{\pi}^{(\alpha(g,f)^{-1})}(y).
\]
Since $\pi$ is relatively contractive, there exists $\{ \gamma_{n} \}$ such that $D_{\pi}^{(\gamma_{n})}(y) \to \delta_{x}$.  Set $g_{n} = f\gamma_{n}f^{-1}$.  Then $\alpha(g_{n},f) = \gamma_{n}^{-1}$ and so
\[
D_{\Phi}^{(g_{n})}(f,y) = \delta_{f} \times D_{\pi}^{(\gamma_{n})}(y) \to \delta_{(f,x)}
\]
meaning that $\Pi$ is relatively contractive.
\end{proof}

\subsection{Relative Joinings Over Relatively Contractive Maps}

Relatively contractive maps were introduced in \cite{CP14} and used to show that any joining between a contractive space and a measure-preserving space such that the projection to the contractive space is relatively measure-preserving is necessarily the independent joining.  We generalize this fact to the case of relative joinings and obtain an analogous result.

\begin{theorem}
Let $(X,\nu)$ and $(Y,\eta)$ be $G$-spaces with a common $G$-quotient $(Z,\zeta)$ such that $\varphi : (Y,\eta) \to (Z,\zeta)$ is relatively contractive and $\pi : (X,\nu) \to (Z,\zeta)$ is a $G$-map.  Then there exists at most one relative joining of $(X,\nu)$ and $(Y,\eta)$ over $(Z,\zeta)$ such that the projection to $(Y,\eta)$ is relatively measure-preserving.
\end{theorem}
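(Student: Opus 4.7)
The plan is to adapt the strategy of Theorem~\ref{T:contractivejoin} to the relative setting, using the relatively contractive structure of $\varphi : (Y,\eta) \to (Z,\zeta)$ in place of absolute contractiveness and invoking Theorem~\ref{T:relcontdense} to pass to a countable dense subgroup so that the uncountable quantifier over $G$ can be handled. Suppose $\rho_1,\rho_2 \in P(X\times Y)$ are two relative joinings of $(X,\nu)$ and $(Y,\eta)$ over $(Z,\zeta)$ with $p_Y$ relatively measure-preserving. Since each $\rho_i$ is supported on $\{(x,y) : \pi(x)=\varphi(y)\}$, the disintegration of $\rho_i$ via $p_Y$ has the form $D^i_{p_Y}(y) = \alpha^i_y \times \delta_y$ with $\alpha^i_y \in P(X)$ supported on $\pi^{-1}(\varphi(y))$, and the relatively measure-preserving hypothesis becomes $\alpha^i_{gy} = g\alpha^i_y$ for all $g \in G$ and $\eta$-a.e.\ $y$. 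Because $(p_X)_*\rho_i = \nu$, disintegrating $\rho_i$ over $\zeta$ via $\pi \circ p_X = \varphi \circ p_Y$ and projecting to $X$ yields, on the complement of a single $\zeta$-null set $N \subseteq Z$,
\[
\int_{\varphi^{-1}(z)} \alpha^i_y~dD_\varphi(z)(y) = D_\pi(z), \qquad i=1,2.
\]

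Fix $f \in L^\infty(X,\nu)$ and set $F(y) = \alpha^1_y(f) - \alpha^2_y(f)$. For $g \in G$ write $(g\cdot f)(x) = f(g^{-1}x)$, so $g\cdot f \in L^\infty(X,\nu)$ since $g\nu \sim \nu$. The equivariance $\alpha^i_{g^{-1}y} = g^{-1}\alpha^i_y$ gives $F(g^{-1}y) = (\alpha^1_y - \alpha^2_y)(g\cdot f)$, hence
\[
D_\varphi^{(g)}(z)(F) = \int F(g^{-1}y)~dD_\varphi(gz)(y) = \int (\alpha^1_y - \alpha^2_y)(g\cdot f)~dD_\varphi(gz)(y) = 0
\]
for every $z$ with $gz \notin N$, by the previous display at $gz$ with $g\cdot f$ in place of $f$. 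By quasi-invariance of $\zeta$, this holds $\zeta$-almost everywhere for each fixed $g$.

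To pass from "for each $g$" to "for all $g$" on a common full-measure set, choose a countable dense subgroup $G_0 \leq G$; Theorem~\ref{T:relcontdense} guarantees that $\varphi$ remains relatively $G_0$-contractive. Fix a continuous compact model of $\pi$ and a countable norm-dense family $\{f_n\} \subseteq C(X_0)$, which separates Borel probability measures on $X_0$, and set $F_n(y) = \alpha^1_y(f_n) - \alpha^2_y(f_n)$. On the complement of the $\zeta$-null set $\bigcup_{g\in G_0} g^{-1}N$ one has $D_\varphi^{(g)}(z)(F_n) = 0$ for every $g \in G_0$ and every $n$. Proposition~\ref{P:contractiveexte} applied to the $G_0$-action then forces $\|F_n\|_{L^\infty(Y,D_\varphi(z))} = 0$, so $F_n = 0$ $\eta$-almost everywhere; since $\{f_n\}$ separates measures, $\alpha^1_y = \alpha^2_y$ for $\eta$-a.e.\ $y$, so $\rho_1 = \rho_2$. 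The delicate step is exactly this countable-dense-subgroup reduction: the per-$g$ identity $D_\varphi^{(g)}(z)(F)=0$ initially holds only off a $g$-dependent null set, and without Theorem~\ref{T:relcontdense} the uncountable union could not be controlled, preventing the application of the isometric characterization that closes the argument.
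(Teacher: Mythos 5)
Your proposal is correct and follows essentially the same argument as the paper's proof: you define $F$ as the difference of the fiberwise barycenter functions of the two joinings, show $D_{\varphi}^{(g)}(z)(F)=0$ for all $g$ because each term reduces to $D_{\pi}^{(g)}(z)(f)$ (which depends only on $\pi$, not on the joining), and then invoke the isometric characterization of the relative contractiveness of $\varphi$ to force $F=0$. The one genuine addition is your explicit handling of the $g$-dependent null sets over an uncountable group by passing to a countable dense subgroup via Theorem \ref{T:relcontdense} and a countable separating family of test functions, a point the paper's proof leaves implicit.
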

\begin{proof}
For convenience, write $W = X \times Y$.
Let $\rho$ be a relative joining of $(X,\nu)$ and $(Y,\eta)$ over $(Z,\zeta)$ such that $\varphi : (Y,\eta) \to (Z,\zeta)$ is relatively contractive, $p_{Y} : (W,\rho) \to (Y,\eta)$ is relatively measure-preserving and $p_{X} : (W,\rho) \to (X,\nu)$ and $\pi : (X,\nu) \to (Z,\zeta)$ are $G$-maps such that $\pi \circ p_{X} = \varphi \circ p_{Y}$ almost everywhere.  Denote by $\psi : (W,\rho) \to (Z,\zeta)$ the composition: $\psi = \pi \circ p_{X} = \varphi \circ p_{Y}$.

Let $z \in Z$ and let $f \in L^{\infty}(\pi^{-1}(z), D_{\pi}(z))$ be arbitrary.  Then $f \circ p_{X} \in L^{\infty}(\psi^{-1}(z), D_{\psi}(z))$ since $D_{\psi}(z) = \int D_{p_{X}}(x)~dD_{\pi}(z)(x)$.  Define 
\[
F(y) = D_{p_{Y}}(y)(f \circ p_{X})
\]
and observe that $F \in L^{\infty}(\varphi^{-1}(z), D_{\varphi}(z))$.

For an arbitrary $g \in G$, using that $p_{Y}$ is relatively measure-preserving,
\begin{align*}
D_{\varphi}^{(g)}(z)(F) &= \int_{\varphi^{-1}(z)} F(y)~dg^{-1}D_{\varphi}(gz) \\
&= \int_{\varphi^{-1}(gz)} F(g^{-1}y)~dD_{\varphi}(gz) \\
&= \int_{\varphi^{-1}(gz)} \int_{p_{Y}^{-1}(g^{-1}y)} f(p_{X}(w))~dD_{p_{Y}}(g^{-1}y)(w)~dD_{\varphi}(gz)(y) \\
&= \int_{\varphi^{-1}(gz)} \int_{p_{Y}^{-1}(g^{-1}y)} f(p_{X}(w))~dg^{-1}D_{p_{Y}}(y)(w)~dD_{\varphi}(gz)(y) \\
&= \int_{\varphi^{-1}(gz)} \int_{p_{Y}^{-1}(y)} f(p_{X}(g^{-1}w))~dD_{p_{Y}}(y)(w)~dD_{\varphi}(gz)(y) \\
&= \int_{\varphi^{-1}(gz)} \int_{p_{Y}^{-1}(y)} f(g^{-1} p_{X}(w))~dD_{p_{Y}}(y)(w)~dD_{\varphi}(gz)(y)
\end{align*}
Now $\int_{\varphi^{-1}(gz)} D_{p_{Y}}(y)~dD_{\varphi}(gz)(y) = D_{\psi}(gz)$ and therefore
\begin{align*}
D_{\varphi}^{(g)}(z)(F) &= \int_{\psi^{-1}(gz)} f(g^{-1}p_{X}(w))~dD_{\psi}(gz)(w)
= \int_{p_{X}(\psi^{-1}(gz))} f(g^{-1}x)~d((p_{X})_{*}D_{\psi}(gz))(x) \\
&= \int_{\pi^{-1}(gz)} f(g^{-1}x)~dD_{\pi}(gz)(x)
= D_{\pi}^{(g)}(z)(f).
\end{align*}

Now let $\rho_{1}$ and $\rho_{2}$ both be relative joinings over $(Z,\zeta)$.  Since $\varphi$ is relatively contractive, there is a measure one set of $z \in Z$ such that for all $F \in L^{\infty}(\varphi^{-1}(z),D_{\varphi}(z))$, we have that $\sup_{g \in G} |D_{\varphi}^{(g)}(F)| = \| F \|$.  Fix $z$ in this measure one set.

Let $f \in L^{\infty}(\pi^{-1}(z),D_{\pi}(z))$ be arbitrary.  Let $D_{p_{Y}}^{j}$ and $D_{\psi}^{j}$ for $j = 1,2$ denote the disintegrations of $\rho_{1}$ and $\rho_{2}$ over $\eta$ and $\zeta$, respectively.
Define, for $j = 1,2$,
\[
F_{j}(y) = D_{p_{Y}}^{j}(f \circ p_{X})
\]
and set $F(y) = F_{1}(y) - F_{2}(y)$.  As above, $F \in L^{\infty}(\varphi^{-1}(z),D_{\varphi}(z))$.  Now, by the above, for any $g \in G$,
\[
D_{\varphi}^{(g)}(z)(F_{1}) = D_{\pi}^{(g)}(z)(f) = D_{\varphi}^{(g)}(z)(F_{2})
\]
and therefore $D_{\varphi}^{(g)}(z)(F) = 0$.

Since $z$ is in the measure one set where that map is an isometry, $\| F \| = \sup_{g} |D_{\varphi}^{(g)}(z)(F)| = 0$.  Therefore $F = 0$ almost everywhere.  As this holds for all $f \in L^{\infty}(\pi^{-1}(z),D_{\pi}(z))$, we conclude that $D_{\varphi}^{1}(y) = D_{\varphi}^{2}(y)$ for $D_{\varphi}(z)$-almost-every $y \in \varphi^{-1}(z)$.

Now let $f \in L^{\infty}(\psi^{-1}(z),D_{\psi}(z))$ be arbitrary and observe that
\begin{align*}
D_{\psi}^{j}(z)(f) &= \int_{\psi^{-1}(z)} f(x,y)~dD_{\psi}^{j}(z)(x,y)
= \int_{\varphi^{-1}(z)} \int_{p_{Y}^{-1}(y)} f(x,y)~dD_{p_{Y}}^{j}(y)(x)~dD_{\varphi}(z)(y).
\end{align*}
Since $D_{p_{Y}}^{1}(y) = D_{p_{Y}}^{2}(y)$ for $D_{\varphi}(z)$-almost every $y$,
\[
D_{\psi}^{1}(z)(f) = D_{\psi}^{2}(z)(f).
\]
This holds for all $f \in L^{\infty}(\psi^{-1}(z),D_{\psi}(z))$ and so $D_{\psi}^{1}(z) = D_{\psi}^{2}(z)$.

Since the above holds for all $z$ in a measure one set,
\[
\rho_{1} = \int_{Z} D_{\psi}^{1}(z)~d\zeta(z) = \int_{Z} D_{\psi}^{2}(z)~d\zeta(z) = \rho_{2}.
\]
\end{proof}

\begin{corollary}
Let $(X,\nu)$ and $(Y,\eta)$ be $G$-spaces with a common $G$-quotient $(Z,\zeta)$ such that $\varphi : (Y,\eta) \to (Z,\zeta)$ is relatively contractive and $\pi : (X,\nu) \to (Z,\zeta)$ is relatively measure-preserving.  Then the only relative joining of $(X,\nu)$ and $(Y,\eta)$ over $(Z,\zeta)$ such that the projection to $(Y,\eta)$ is relatively measure-preserving is the independent relative joining.
\end{corollary}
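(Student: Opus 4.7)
The plan is to combine the uniqueness statement of the preceding theorem with a direct verification that the independent relative joining actually has the property that the projection to $(Y,\eta)$ is relatively measure-preserving. The preceding theorem already gives that at most one relative joining over $(Z,\zeta)$ with $p_Y$ relatively measure-preserving exists (using only that $\varphi$ is relatively contractive). So the only thing left to show is that the independent relative joining is one such, and this is where the hypothesis that $\pi$ is relatively measure-preserving will be used.

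First I would write the independent relative joining as
\[
\rho = \int_{Z} D_{\pi}(z) \times D_{\varphi}(z)\, d\zeta(z)
\]
and compute its disintegration over $\eta$ via the projection $p_Y : X \times Y \to Y$. The natural candidate is $D_{p_Y}(y) = D_{\pi}(\varphi(y)) \times \delta_y$; this measure is manifestly supported on $p_Y^{-1}(y) = X \times \{y\}$, and disintegrating $\eta$ over $\zeta$ via $\varphi$ shows
\[
\int_{Y} D_{\pi}(\varphi(y)) \times \delta_y\, d\eta(y) = \int_{Z} D_{\pi}(z) \times \left( \int_{\varphi^{-1}(z)} \delta_y\, dD_{\varphi}(z)(y) \right) d\zeta(z) = \rho,
\]
so by uniqueness of disintegration this is indeed $D_{p_Y}$.

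Next I would check $G$-equivariance of $y \mapsto D_{p_Y}(y)$. For $g \in G$, on the one hand
\[
g D_{p_Y}(y) = \bigl(g D_{\pi}(\varphi(y))\bigr) \times \delta_{gy},
\]
and on the other hand, using that $\varphi$ is $G$-equivariant,
\[
D_{p_Y}(gy) = D_{\pi}(\varphi(gy)) \times \delta_{gy} = D_{\pi}(g\varphi(y)) \times \delta_{gy}.
\]
These agree precisely when $g D_{\pi}(z) = D_{\pi}(gz)$ for $\zeta$-almost every $z$, which is exactly the assumption that $\pi$ is relatively measure-preserving. Thus $p_Y$ is relatively measure-preserving for the independent relative joining.

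Finally, applying the preceding theorem (whose hypotheses require $\varphi$ relatively contractive and are satisfied here), the independent relative joining is the unique relative joining with this property. I do not expect any serious obstacle; the computation is bookkeeping with disintegrations, and the only conceptual content is that relative measure-preservation of $\pi$ is exactly the right condition to make the disintegration of the independent relative joining $G$-equivariant.
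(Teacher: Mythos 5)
Your proposal is correct and follows essentially the same route as the paper: identify $D_{p_Y}(y) = D_{\pi}(\varphi(y)) \times \delta_y$ as the disintegration of the independent relative joining over $\eta$ by verifying support and the integral identity, use relative measure-preservation of $\pi$ to get $G$-equivariance of this disintegration, and then invoke the preceding uniqueness theorem. No gaps.
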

\begin{proof}
By the previous theorem, we need only show that the independent relative joining $\rho = \int D_{\pi} \times D_{\varphi}~d\zeta$ is a relative joining such that the projection to $(Y,\eta)$ is relatively measure-preserving.  Let $D_{p_{Y}}$ be the disintegration of $\rho$ over $\eta$.  Observe that $p_{Y}^{-1}(y) = \pi^{-1}(\varphi(y)) \times \{ y \}$ and that the support of $D_{\pi}(\varphi(y)) \times \delta_{y}$ is the same.  Now
\begin{align*}
\int_{Y} D_{\pi}(\varphi(y)) \times \delta_{y}~d\eta(y) &= \int_{Z} \int_{Y} D_{\pi}(z) \times \delta_{y}~dD_{\varphi}(z)(y)~d\eta(y) \\
&= \int_{Z} D_{\pi}(z) \times D_{\varphi}(z)~d\zeta(z) = \rho
\end{align*}
so by uniqueness, $D_{p_{Y}}(y) = D_{\pi}(\varphi(y)) \times \delta_{y}$ almost everywhere.  
Then, using that $\pi$ is relatively measure-preserving,
\[
D_{p_{Y}}(gy) = D_{\pi}(\varphi(gy)) \times \delta_{gy} = gD_{\pi}(\varphi(y)) \times g \delta_{y} = g D_{p_{Y}}(y)
\]
so $p_{Y}$ is relatively measure-preserving.  By the previous theorem, $\rho$ is then the unique relative joining.
\end{proof}

\begin{corollary}\label{C:reljoinunique}
Let $G$ be a locally compact second countable group and let $(X,\nu), (Y,\eta), (Z,\zeta)$ and $(W,\rho)$ be $G$-spaces such that the following diagram of $G$-maps commutes:
\begin{diagram}
(W,\rho)		&\rTo^{\psi}	&(X,\nu)\\
\dTo^{\tau}	&			&\dTo^{\pi}\\
(Y,\eta)		&\rTo^{\varphi}	&(Z,\zeta)
\end{diagram}
If $\tau$ and $\pi$ are relatively measure-preserving and $\psi$ and $\varphi$ are relatively contractive then $(W,\rho)$ is $G$-isomorphic to the independent relative joining of $(X,\nu)$ and $(Y,\eta)$ over $(Z,\zeta)$.
\end{corollary}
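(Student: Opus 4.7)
The plan is to consider the combined $G$-map $\psi \times \tau : (W,\rho) \to (X \times Y, \alpha)$ defined by $(\psi \times \tau)(w) = (\psi(w),\tau(w))$, where $\alpha = (\psi \times \tau)_{*}\rho$, and to establish two things simultaneously: that $\alpha$ is the independent relative joining of $(X,\nu)$ and $(Y,\eta)$ over $(Z,\zeta)$, and that $\psi \times \tau$ is a $G$-isomorphism. The strategy reduces the statement to Proposition \ref{P:bothrels} together with the uniqueness corollary for relative joinings proved immediately above.

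First I would check that $\alpha$ is a relative joining: the pushforwards under $p_{X}$ and $p_{Y}$ recover $\nu$ and $\eta$ since $p_{X} \circ (\psi \times \tau) = \psi$ and $p_{Y} \circ (\psi \times \tau) = \tau$, and the commuting square $\pi \circ p_{X} = \varphi \circ p_{Y}$ on $(X \times Y, \alpha)$ is inherited from the hypothesis $\pi \circ \psi = \varphi \circ \tau$ on $(W,\rho)$.

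Next I would apply both factorization theorems to the decompositions $\psi = p_{X} \circ (\psi \times \tau)$ and $\tau = p_{Y} \circ (\psi \times \tau)$. Since $\psi$ is relatively contractive, Theorem \ref{T:contractivecomp} gives that both $p_{X}$ and $\psi \times \tau$ are relatively contractive; since $\tau$ is relatively measure-preserving, the analogous factorization theorem for relatively measure-preserving maps (stated immediately after Theorem \ref{T:contractivecomp}) gives that both $p_{Y}$ and $\psi \times \tau$ are relatively measure-preserving. The map $\psi \times \tau$ is then simultaneously relatively contractive and relatively measure-preserving, so Proposition \ref{P:bothrels} identifies $(W,\rho)$ with $(X \times Y, \alpha)$ $G$-isomorphically. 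Meanwhile, $p_{Y} : (X \times Y, \alpha) \to (Y,\eta)$ is relatively measure-preserving, $\varphi$ is relatively contractive by hypothesis, and $\pi$ is relatively measure-preserving by hypothesis, so the preceding corollary on uniqueness of the independent relative joining forces $\alpha$ to be precisely the independent relative joining of $(X,\nu)$ and $(Y,\eta)$ over $(Z,\zeta)$.

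I do not anticipate a substantive obstacle: the real content was already absorbed into Proposition \ref{P:bothrels} and the preceding uniqueness corollary, so the only work is the diagram-chasing that propagates the two relative properties through the projections $p_{X}$ and $p_{Y}$. The corollary is in essence a repackaging that identifies the given space $(W,\rho)$ with the unique candidate joining via the canonical map $\psi \times \tau$.
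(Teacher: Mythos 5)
Your proposal is correct and follows essentially the same route as the paper: the map $\psi\times\tau$ is exactly the map $p$ in the paper's proof, and both arguments propagate relative contractiveness and relative measure-preservation through the factorizations $\psi = p_{X}\circ p$ and $\tau = p_{Y}\circ p$, invoke Proposition \ref{P:bothrels} to see $p$ is an isomorphism, and then apply the preceding uniqueness corollary to identify $p_{*}\rho$ with the independent relative joining.
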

\begin{proof}
Consider the map $p : W \to X \times Y$ by $p(w) = (\psi(w), \tau(w))$.  Then $p_{*}\rho$ is a relative joining of $(X,\nu)$ and $(Y,\eta)$ over $(Z,\zeta)$.  Let $p_{X} : X \times Y \to X$ and $p_{Y} : X \times Y \to Y$ be the natural projections and observe that the following diagram commutes:
\begin{diagram}
(W,\rho)		&\rTo^{p}	&(X \times Y,p_{*}\rho)	&\rTo^{p_{X}}	&(X,\nu)\\
			&		&\dTo^{p_{Y}}			&			&\dTo^{\pi}\\
			&		&(Y,\eta)				&\rTo^{\varphi}	&(Z,\zeta)
\end{diagram}
since $p_{X} \circ p = \psi$ and $p_{Y} \circ p = \tau$.

Now $\psi$ is relatively contractive so $p$ and $p_{X}$ are relatively contractive and likewise $\tau$ being relatively measure-preserving implies $p$ and $p_{Y}$ are relatively measure-preserving.  Therefore $p$ is an isomorphism (Proposition \ref{P:bothrels}).  Since $\varphi$ is relatively contractive and $p_{Y}$ is relatively measure-preserving and $\pi$ is relatively measure-preserving, the previous corollary says that $p_{*}\rho$ is the independent relative joining.
\end{proof}

\section{The Factor Theorems}

The results in the previous section on the uniqueness of relatively contractive maps and their (lack of) joinings with measure-preserving systems lead to the so-called factor theorems that are the main tool in the rigidity theorems presented in the final section.

\subsection{The Intermediate Contractive Factor Theorem}

The first factor theorem we present appears in \cite{CP14} and is a generalization of the factor theorem for contractive actions in \cite{CS14}.

\begin{theorem}\label{T:contractiveIFT}
Let $\Gamma < G$ be a lattice in a locally compact second countable group and let $\Lambda$ contain and commensurate $\Gamma$ (commensurate meaning that $\Gamma \cap \lambda\Gamma\lambda^{-1}$ has finite index in $\Gamma$ for each $\lambda \in \Lambda$) and be dense in $G$.  Let $(X,\nu)$ be a contractive $(G,\mu)$-space, meaning that $\mu * \nu = \nu$, (for some $\mu \in P(G)$ such that the support of $\mu$ generates $G$) and $(Y,\eta)$ be a measure-preserving $G$-space.  Let $\pi : (X \times Y, \nu \times \eta) \to (Y,\eta)$ be the natural projection map from the product space with the diagonal action.
Let $(Z,\zeta)$ be a $\Lambda$-space such that there exist $\Gamma$-maps $\varphi : (X\times Y,\nu\times\eta) \to (Z,\zeta)$ and $\rho : (Z,\zeta) \to (Y,\eta)$ with $\rho \circ \varphi = \pi$.  Then $\varphi$ and $\rho$ are $\Lambda$-maps and $(Z,\zeta)$ is $\Lambda$-isomorphic to a $G$-space and over this isomorphism the maps $\varphi$ and $\rho$ become $G$-maps.
\end{theorem}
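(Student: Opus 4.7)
The plan is to reduce the theorem to the uniqueness theorem for relatively contractive factors (Theorem \ref{T:relcontractiveunique}) applied, for each $\lambda \in \Lambda$, at the level of the commensurating finite-index subgroup $\Gamma_\lambda = \Gamma \cap \lambda^{-1}\Gamma\lambda$; this will give $\lambda$-equivariance of $\varphi$ and $\rho$, which then upgrades to a genuine $G$-action on $Z$ via the density of $\Lambda$ in $G$.

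First I would establish that $\varphi$ and $\rho$ are relatively contractive $\Gamma$-maps. By Theorem \ref{T:contractivelattice}, $(X,\nu)$ is a contractive $\Gamma$-space, so by Example \ref{E:examplecont} the projection $\pi : (X \times Y, \nu \times \eta) \to (Y,\eta)$ is a relatively contractive $\Gamma$-map. Theorem \ref{T:contractivecomp} applied to $\pi = \rho \circ \varphi$ then forces both $\varphi$ and $\rho$ to be relatively contractive $\Gamma$-maps. Next, for each $\lambda \in \Lambda$, set $\Gamma_\lambda = \Gamma \cap \lambda^{-1}\Gamma\lambda$ (finite index in $\Gamma$ by commensuration) and define the twisted maps
\[
\varphi_\lambda(w) = \lambda^{-1}\varphi(\lambda w), \qquad \rho_\lambda(z) = \lambda^{-1}\rho(\lambda z).
\]
Using $\lambda\Gamma_\lambda\lambda^{-1} \subseteq \Gamma$ together with the $\Gamma$-equivariance of $\varphi,\rho$, a direct calculation shows $\varphi_\lambda,\rho_\lambda$ are $\Gamma_\lambda$-equivariant with $\rho_\lambda \circ \varphi_\lambda = \pi$, and that $\zeta' := (\varphi_\lambda)_*(\nu\times\eta)$ lies in the measure class of $\zeta$ because $\zeta$ is $\Lambda$-quasi-invariant. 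By Theorem \ref{T:rcfi}, all four maps $\varphi,\rho,\varphi_\lambda,\rho_\lambda$ remain relatively contractive for $\Gamma_\lambda$.

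I would then apply Theorem \ref{T:relcontractiveunique} at the level of $\Gamma_\lambda$ to the two factorizations $(\varphi,\rho)$ and $(\varphi_\lambda,\rho_\lambda)$ of $\pi$ through $Z$, aiming to conclude $\varphi = \varphi_\lambda$ and $\rho = \rho_\lambda$ almost everywhere. The main obstacle is the disintegration-class hypothesis of that theorem: $D_\rho(y)$ and $D_{\rho_\lambda}(y)$ are a priori supported on the distinct fibers $\rho^{-1}(y)$ and $\lambda^{-1}\rho^{-1}(\lambda y)$, so they cannot lie in a common measure class in the literal sense. I would sidestep this by transferring the comparison to the relative joining $\kappa = (\varphi,\varphi_\lambda)_*(\nu\times\eta)$ of $(Z,\zeta)$ and $(Z,\zeta')$ over $(Y,\eta)$ via $\rho$ and $\rho_\lambda$, and invoking the uniqueness of relative joinings (Corollary \ref{C:reljoinunique}) together with the relative contractiveness of $\varphi,\varphi_\lambda$ as $\Gamma_\lambda$-maps; this should identify $\kappa$ with the graph joining of $\varphi$, forcing $\varphi = \varphi_\lambda$ and hence also $\rho = \rho_\lambda$ almost everywhere.

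Finally, for the $G$-structure on $(Z,\zeta)$: the $\Lambda$-equivariance of $\varphi$ shows that the fiber equivalence relation on $X \times Y$ induced by $\varphi$ is $\Lambda$-invariant. Passing to a continuous compact model via Mackey's theorem, the subset of $g \in G$ preserving this relation is closed and contains the dense set $\Lambda$, hence equals $G$; the quotient $(Z,\zeta)$ therefore inherits a $G$-action extending the given $\Lambda$-action, and under this extension $\varphi$ and $\rho$ become $G$-maps. The principal technical burden in this last step is the null-set bookkeeping needed to carry out the density argument uniformly in $g$, which I expect to handle via the approximate-identity and selection techniques used in the proof of Theorem \ref{T:relcontdense}.
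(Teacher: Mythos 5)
Your overall architecture is the same as the paper's: twist the maps by $\lambda$ to get $\varphi_\lambda, \rho_\lambda$, restrict to the finite-index subgroup $\Gamma_\lambda = \Gamma\cap\lambda^{-1}\Gamma\lambda$, invoke the uniqueness theorem to force $\varphi_\lambda = \varphi$, and then use density of $\Lambda$ in $G$ plus Mackey's point-realization theorem to produce the $G$-action on $Z$. The problem is the step you flag as "the main obstacle," where you abandon the hypothesis of Theorem \ref{T:relcontractiveunique} as unverifiable and substitute a relative-joining argument. Your premise there is wrong: $D_\rho(y)$ and $D_{\rho_\lambda}(y)$ are both probability measures on $Z$, and the hypothesis asks only that they be mutually absolutely continuous as such; the fact that they are supported on the fibers of two different maps is not an a priori obstruction, and in this situation they really are equivalent. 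The paper verifies this directly: $D_{\rho_\lambda}(y) = (\varphi_\lambda)_* D_\pi(y) = \lambda^{-1}\varphi_*\bigl(\lambda D_\pi(y)\bigr)$, and since $\pi$ is a $\Lambda$-map the conjugated fiber measure $\lambda D_\pi(y)$ is equivalent to $D_\pi(\lambda y)$, so $D_{\rho_\lambda}(y)$ is equivalent to $\lambda^{-1} D_\rho(\lambda y)$; the latter is the disintegration of $\lambda^{-1}\zeta$ over $\eta$ via the \emph{same} map $\rho$, and since $\lambda^{-1}\zeta\sim\zeta$, Lemma \ref{L:abscontmeas} gives $\lambda^{-1}D_\rho(\lambda y)\sim D_\rho(y)$ almost everywhere. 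This is the whole content of the verification, and it is short.

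Your proposed detour does not close the gap it is meant to close. Corollary \ref{C:reljoinunique} (and likewise Theorem \ref{T:contractivejoin} and Theorem \ref{T:moregeneral}) requires one leg of the square to be relatively measure-preserving, but in your joining $\kappa = (\varphi,\varphi_\lambda)_*(\nu\times\eta)$ both projections $Z\times Z\to Z$ are relatively contractive (each is a post-composition factor of the relatively contractive $\varphi$ resp.\ $\varphi_\lambda$, by Theorem \ref{T:contractivecomp}), and neither is relatively measure-preserving unless it is already an isomorphism --- which is essentially what you are trying to prove. No argument is given for why $\kappa$ should be the graph joining, and none of the cited uniqueness results applies. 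The remainder of your outline (establishing relative contractiveness of $\varphi,\rho$ via Theorems \ref{T:contractivelattice}, \ref{T:contractivecomp} and \ref{T:rcfi}, the $\Gamma_\lambda$-equivariance computation, and the density/compact-model argument for the $G$-action) is sound and matches the paper; the fix is simply to replace the joining detour with the direct measure-class verification above.
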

\begin{proof}
Write $(W,\rho) = (X \times Y, \nu \times \eta)$.
Fix $\lambda \in \Lambda$.  Define the maps $\varphi_{\lambda} : W \to Z$ and $\rho_{\lambda} : Z \to Y$ by $\varphi_{\lambda}(w) = \lambda^{-1}\varphi(\lambda w)$ and $\rho_{\lambda}(z) = \lambda^{-1}\rho(\lambda z)$.  Then $\rho_{\lambda} \circ \varphi_{\lambda}(w) = \lambda^{-1} \rho(\lambda \lambda^{-1}\varphi(\lambda w)) = \lambda^{-1}\rho(\varphi(\lambda w)) = \lambda^{-1}\pi(\lambda w) = \pi(w)$ since $\pi$ is $\Lambda$-equivariant.  Let $\Gamma_{0} = \Gamma \cap \lambda^{-1}\Gamma\lambda$.  Then for $\gamma_{0} \in \Gamma_{0}$, write $\gamma_{0} = \lambda^{-1}\gamma\lambda$ for some $\gamma \in \Gamma$ and we see that $\varphi_{\lambda}(\gamma_{0}w) = \lambda^{-1}\varphi(\lambda\gamma_{0}w) = \lambda^{-1}\varphi(\gamma \lambda w) = \lambda^{-1}\gamma\varphi(\lambda w) = \gamma_{0} \lambda^{-1}\varphi(\lambda w) = \gamma_{0} \varphi_{\lambda}(w)$ meaning that $\varphi_{\lambda}$ is $\Gamma_{0}$-equivariant.  Likewise $\rho_{\lambda}$ is $\Gamma_{0}$-equivariant.  Hence $\varphi$, $\varphi_{\lambda}$, $\rho$ and $\rho_{\lambda}$ are all $\Gamma_{0}$-equivariant.  

Since $(X,\nu)$ is a contractive $(G,\mu)$-space and $\Gamma_{0}$ is a lattice in $G$, by Theorem \ref{T:contractivelattice}, $(X,\nu)$ is a contractive $\Gamma_{0}$-space.  By Theorem \ref{T:relcontractiveunique} applied to $\Gamma$, we can conclude that $\varphi_{\lambda} = \varphi$ and that $\rho_{\lambda} = \rho$ provided we can show that the disintegration measures $D_{\rho}(y)$ and $D_{\rho_{\lambda}}(y)$ are in the same measure class for almost every $y$.  Assuming this for the moment, we then conclude that $\varphi$ is $\Lambda$-equivariant since $\varphi_{\lambda} = \varphi$ for each $\lambda$.  The $\sigma$-algebra of pullbacks of measurable functions on $(Z,\zeta)$ form a $\Lambda$-invariant sub-$\sigma$-algebra of $L^{\infty}(W,\rho)$ which is therefore also $G$-invariant (because $\Lambda$ is dense in $G$) and so $(Z,\zeta)$ has a point realization as a $G$-space \cite{Ma62} and likewise $\varphi$ and $\rho$ as $G$-maps.

It remains only to show that the disintegration measures have the required property.  First note that $D_{\rho}(y) = \varphi_{*}D_{\rho\circ\varphi}(y)$ by the uniqueness of the disintegration measure and likewise that $D_{\rho_{\lambda}}(y) = (\varphi_{\lambda})_{*}D_{\rho_{\lambda}\circ\varphi_{\lambda}}(y) = \lambda^{-1}\varphi_{*}\lambda D_{\rho\circ\varphi}(y) = \lambda^{-1}\varphi_{*}D_{\rho\circ\varphi}^{(\lambda^{-1})}(\lambda y)$.  Now $\rho\circ\varphi = \pi$ is a $\Lambda$-map so $D_{\rho\circ\varphi}^{(\lambda^{-1})}(\lambda y)$ is in the same measure class as $D_{\rho\circ\varphi}(\lambda y)$.  Therefore $D_{\rho_{\lambda}}(y)$ is in the same measure class as $\lambda^{-1}\varphi_{*}D_{\rho\circ\varphi}(\lambda y) = \lambda^{-1} D_{\rho}(\lambda y)$.  Now $\lambda^{-1} D_{\rho}(\lambda y)$ disintegrates $\lambda^{-1}\zeta$ over $\lambda^{-1}\eta$ via $\rho$ and $\lambda^{-1}\zeta$ is in the same measure class as $\zeta$ since $(Z,\zeta)$ is a $\Lambda$-space.  Therefore, by Lemma \ref{L:abscontmeas}, $\lambda^{-1}D_{\rho}(\lambda y)$ and $D_{\rho}(y)$ are in the same measure class for almost every $y$.  Hence $D_{\rho_{\lambda}}(y)$ and $D_{\rho}(y)$ are in the same measure class for almost every $y$ as needed.
\end{proof}

\subsection{The Intermediate Contractive Factor Theorem for Products}

The second factor theorem we present is a strengthening of the Bader-Shalom Intermediate Factor Theorem \cite{BS04} that first appeared in \cite{Cre13}:
\begin{theorem}\label{T:IFT}
Let $G = G_{1} \times G_{2}$ be a product of two locally compact second countable groups and let $\mu_{j} \in P(G_{j})$ be admissible probability measures for $j=1,2$.  Set $\mu = \mu_{1} \times \mu_{2}$.

Let $(B,\beta)$ be the Poisson boundary for $(G,\mu)$ and let $(X,\nu)$ be a measure-preserving $G$-space.  Let $(W,\rho)$ be a $G$-space such that there exist $G$-maps $\pi : (B \times X, \beta\times\nu) \to (W,\rho)$ and $\varphi : (W,\rho) \to (X,\nu)$ with $\varphi \circ \pi$ being the natural projection to $X$.

Let $(W_{1},\rho_{1})$ be the space of $G_{2}$-ergodic components of $(W,\rho)$ and let $(W_{2},\rho_{2})$ be the space of $G_{1}$-ergodic components.  Likewise, let $(X_{1},\nu_{1})$ and $(X_{2},\nu_{2})$ be the ergodic components of $(X,\nu)$ for $G_{2}$ and $G_{1}$, respectively.

Then $(W,\rho)$ is $G$-isomorphic to the independent relative joining of $(W_{1},\rho_{1}) \times (W_{2},\rho_{2})$ and $(X,\nu)$ over $(X_{1},\nu_{1}) \times (X_{2},\nu_{2})$.
\end{theorem}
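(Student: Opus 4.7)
The plan is to apply Corollary~\ref{C:reljoinunique} to a commuting square with $(W,\rho)$ at the apex and $(X_1,\nu_1) \times (X_2,\nu_2)$ at the opposite corner; this will directly identify $(W,\rho)$ with the desired independent relative joining.

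First I would assemble the square. For $j = 1, 2$, let $X \to X_j$ and $W \to W_j$ be the quotients by the $G_{3-j}$-invariant sub-$\sigma$-algebras (so $X_j$ and $W_j$ carry residual $G$-actions with $G_{3-j}$ acting trivially, since $G_1$ and $G_2$ commute). Let $\pi : X \to X_1 \times X_2$ and $\tau : W \to W_1 \times W_2$ denote the induced combined maps. Because $W \to X \to X_j$ is $G_{3-j}$-invariant, it factors through $W_j$, giving maps $W_j \to X_j$ whose product is $\varphi : W_1 \times W_2 \to X_1 \times X_2$. Together with the given map $\psi : W \to X$ these fit into a commuting square.

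Next I would verify the four properties Corollary~\ref{C:reljoinunique} requires. The map $\psi$ is relatively contractive: $(B,\beta)$ is contractive by Jaworski's theorem, so Example~\ref{E:examplecont} makes $B \times X \to X$ relatively contractive, and Theorem~\ref{T:contractivecomp} applied to $B \times X \to W \to X$ yields the claim. The map $\pi$ is relatively measure-preserving since any $G$-quotient of the measure-preserving $(X,\nu)$ is. The map $\tau$ is relatively measure-preserving because the disintegration of $\rho$ over $\rho_j$ via $W \to W_j$ is $G_{3-j}$-equivariant by construction of the $G_{3-j}$-ergodic decomposition and $G_j$-equivariant because $G_j$ commutes with $G_{3-j}$ and therefore preserves the $G_{3-j}$-invariant sub-$\sigma$-algebra. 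Finally, $\varphi$ is relatively contractive: the $(G,\mu)$-Poisson boundary factors as $B = B_1 \times B_2$ with $B_j$ being $G_j$-contractive and $G_{3-j}$ ergodic on $B_{3-j}$, so the maximal $G_{3-j}$-invariant quotient of $B \times X$ is $B_j \times X_j$; therefore $B \times X \to W \to W_j$ factors through $B_j \times X_j \to W_j$, and since $B_j \times X_j \to X_j$ is relatively contractive (Example~\ref{E:examplecont}), Theorem~\ref{T:contractivecomp} gives that $W_j \to X_j$ is relatively contractive. The product $\varphi$ is then relatively contractive by iterating Example~\ref{E:examplecont} across $W_1 \times W_2 \to X_1 \times W_2 \to X_1 \times X_2$. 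Corollary~\ref{C:reljoinunique} then identifies $(W,\rho)$ with the independent relative joining of $(X,\nu)$ and $(W_1,\rho_1) \times (W_2,\rho_2)$ over $(X_1,\nu_1) \times (X_2,\nu_2)$.

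The main obstacle I anticipate is the verification that $\tau$ is relatively measure-preserving in the $G$-quasi-invariant (but not measure-preserving) setting of $(W,\rho)$: one must carefully handle the $G_{3-j}$-ergodic decomposition at the level of sub-$\sigma$-algebras and extract full $G$-equivariance of the disintegration from the commutativity of $G_1$ and $G_2$. A secondary subtlety is that $\pi_*\nu$ must actually coincide with $\nu_1 \times \nu_2$ on the base so that $\pi$ and $\varphi$ share a target probability space—forced by commutativity of the square combined with the computation $\varphi_*(\rho_1 \times \rho_2) = \nu_1 \times \nu_2$, but ultimately amounting to a Mautner-type independence of the $G_1$- and $G_2$-invariant sub-$\sigma$-algebras of the measure-preserving $G$-space $(X,\nu)$.
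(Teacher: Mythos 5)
First, note that the paper does not actually prove Theorem \ref{T:IFT}; it defers to \cite{Cre13} and says explicitly that the argument combines the joining results for relatively contractive maps with ``results due to Bader and Shalom on the nature of the ergodic decomposition of spaces on which products of groups act.'' Your reduction to Corollary \ref{C:reljoinunique} via the commuting square with $(W,\rho)$ at the apex and $(X_1,\nu_1)\times(X_2,\nu_2)$ at the base is exactly the right frame and matches that outline, and two of your four verifications are sound: $\varphi:W\to X$ is relatively contractive by Example \ref{E:examplecont} plus Theorem \ref{T:contractivecomp}, and $X\to X_1\times X_2$ is relatively measure-preserving because any $G$-quotient map of a measure-preserving space is (by uniqueness of disintegration).

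The genuine gap is your treatment of $\tau:W\to W_1\times W_2$, which is precisely the Bader--Shalom input. You assert that the disintegration of $\rho$ over $\rho_1$ via $W\to W_1$ is $G_2$-equivariant ``by construction of the $G_2$-ergodic decomposition.'' For a merely quasi-invariant (here, stationary) action this is false: $G_2$ acts trivially on $W_1$, so $G_2$-equivariance would mean the fiber measures are $G_2$-\emph{invariant}, whereas the ergodic decomposition of a quasi-invariant action only yields quasi-invariant fiber measures. Already for $W=B=B_1\times B_2$ the map $W\to W_1=B_1$ has disintegration $\delta_{b_1}\times\beta_2$ with $g_2\beta_2\neq\beta_2$, so the individual maps $W\to W_j$ are \emph{not} relatively measure-preserving; only the joint map $W\to W_1\times W_2$ is, and proving that requires the stationarity of $\rho$ and the product structure $\mu=\mu_1\times\mu_2$ (a martingale/conditional-measure argument of Bader--Shalom), not the commutativity of $G_1$ and $G_2$. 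Three further points are asserted rather than proved: (i) that $\pi_*\nu=\nu_1\times\nu_2$ --- this needs $G$-ergodicity of $(X,\nu)$ (one shows the fiber measures of the joining over $X_2$ are $G_1$-invariant and absolutely continuous with respect to $\nu_1$, hence equal to $\nu_1$ by ergodicity); (ii) that the $G_2$-ergodic components of $B\times X$ are $B_1\times X_1$, which again uses the Bader--Shalom structure of $B=B_1\times B_2$ and relative ergodicity; and (iii) that the product of the two relatively contractive maps $W_j\to X_j$ is relatively contractive --- the paper gives no closure of relative contractiveness under composition, so you cannot ``iterate'' Example \ref{E:examplecont}; what saves you here is that $G_1$ and $G_2$ act independently on the two coordinates, so the conjugated disintegrations $D^{(g_1,g_2)}=D^{(g_1)}\times D^{(g_2)}$ can be contracted simultaneously, but that observation needs to be made. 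In short: correct architecture, but the heart of the theorem --- the relative measure-preservation of $W\to W_1\times W_2$ --- is exactly where your justification fails.
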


We opt to omit the proof as it involves both the relatively contractive maps and their relationship to joinings and also results due to Bader and Shalom \cite{BS04} on the nature of the ergodic decomposition of spaces on which products of groups acts (which fall outside our scope).  The reader is referred to \cite{Cre13} for a detailed proof.

\section{Rigidity of Actions of Lattices}

To conclude our exposition, we present now the main results of \cite{CP14} and \cite{Cre13}, all of which rely in crucial fashion on the factor theorems developed above.  These rigidity results are the main application of contractive spaces and relatively contractive maps and were the motivation for the development of these concepts.  While the proofs are beyond the scope of our exposition (and can be found in the respective papers), we stress that the key ingredient in the proofs is the uniqueness property of relatively contractive maps in the form of the factor theorems.

\begin{theorem}[\cite{CP14},\cite{Cre13}]\label{T:app1}
Let $G$ be a semisimple group with trivial center and no compact factors with at least one factor being a connected (real) Lie group with property $(T)$.  Let $\Gamma < G$ be an irreducible lattice (meaning that the projection of $\Gamma$ is dense in every proper normal subgroup of $G$).  Then every measure-preserving action $\Gamma \actson (X,\nu)$ on a nonatomic probability space is essentially free.
\end{theorem}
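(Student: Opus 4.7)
The plan is to reformulate essential freeness in terms of invariant random subgroups (IRS), pass to the induced $G$-action, and apply Theorem~\ref{T:IFT} together with property (T) to force the IRS to be trivial. Suppose for contradiction that $\Gamma \actson (X,\nu)$ is measure-preserving, nonatomic, and not essentially free; by ergodic decomposition I may assume the action is ergodic. The stabilizer map $\sigma(x) := \Gamma_{x}$ is $\Gamma$-equivariant from $X$ into the Chabauty space $\mathrm{Sub}(\Gamma)$ of closed subgroups carrying the conjugation action, so $\theta := \sigma_{*}\nu$ is a $\Gamma$-invariant random subgroup with $\theta \neq \delta_{\{e\}}$.

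I next induce to $G$. The induced $G$-space $(Y,\eta) := (G \times_{\Gamma} X, \alpha)$ is measure-preserving for $G$, and the stabilizer of $[g,x] \in Y$ in $G$ is $g\Gamma_{x}g^{-1}$; hence $\sigma$ induces a $G$-map $\Sigma : (Y,\eta) \to (\mathrm{Sub}(G), \Theta)$ with $\Theta := \Sigma_{*}\eta$ a $G$-invariant probability measure on $\mathrm{Sub}(G)$ not concentrated at $\{e\}$. Decompose $G = G_{1} \times G_{2}$ so that $G_{1}$ contains a connected real Lie factor with property (T), fix admissible symmetric $\mu_{j} \in P(G_{j})$, and let $(B,\beta) = (B_{1} \times B_{2}, \beta_{1} \times \beta_{2})$ be the Poisson boundary of $(G, \mu_{1} \times \mu_{2})$. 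By Example~\ref{E:examplecont} the projection $(B \times Y, \beta \times \eta) \to (Y,\eta)$ is a relatively contractive $G$-map, and by Theorem~\ref{T:contractivecomp} so is its composition with $\Sigma$.

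Now apply Theorem~\ref{T:IFT} to the intermediate $G$-factor $(W,\rho)$ sitting between $(B \times \mathrm{Sub}(G), \beta \times \Theta)$ and $(\mathrm{Sub}(G),\Theta)$ generated by the image of $B \times Y$ under $\mathrm{id} \times \Sigma$, viewing $(\mathrm{Sub}(G),\Theta)$ as a measure-preserving $G$-space. The theorem presents $(W,\rho)$ as an independent relative joining of its $G_{j}$-ergodic components over those of $(\mathrm{Sub}(G),\Theta)$; combined with property (T) on $G_{1}$, the uniqueness of joinings between contractive and measure-preserving systems (Corollary~\ref{C:contractiveoppmp}), and the algebraic characterization of contractiveness (Proposition~\ref{P:contractiveexte}), this forces the $G_{j}$-ergodic decomposition of $\mathrm{Sub}(G)$ to collapse to the trivial one for each $j$. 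Hence $\Theta$ is supported on subgroups normalized by both $G_{1}$ and $G_{2}$, and therefore on closed normal subgroups of $G$. Since $G$ has trivial center and no compact factors, the only countable closed normal subgroup of $G$ is $\{e\}$; but $\Theta$-almost every $H = g\Gamma_{x}g^{-1}$ is countable, forcing $\Theta = \delta_{\{e\}}$ and contradicting $\theta \neq \delta_{\{e\}}$. Thus $\theta = \delta_{\{e\}}$ and the action is essentially free.

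The main obstacle is the third step: identifying the correct intermediate $G$-factor to feed into Theorem~\ref{T:IFT} and extracting from the resulting product splitting the conclusion that $\Theta$ concentrates on closed normal subgroups of $G$. This is where the full force of the relatively contractive machinery---especially the uniqueness theorem (Theorem~\ref{T:relcontractiveunique}) underpinning the factor theorem---enters the argument, and where the property (T) hypothesis plays its decisive role in ruling out intermediate joinings with non-normal support.
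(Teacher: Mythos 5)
There is a genuine gap here, and it sits exactly where you flag it yourself: the third step is not an "obstacle" so much as the entire content of the theorem, and the two sentences you offer in its place do not hold up. First, the object you hand to Theorem \ref{T:IFT} is degenerate. Since $\mathrm{id}\times\Sigma$ is a product of maps and $\beta\times\eta$ is a product measure, $(\mathrm{id}\times\Sigma)_{*}(\beta\times\eta)=\beta\times\Theta$; the "intermediate factor generated by the image of $B\times Y$" is therefore all of $(B\times\mathrm{Sub}(G),\beta\times\Theta)$, the top of the chain, and Theorem \ref{T:IFT} applied to it says nothing about $\Theta$ --- the stabilizer structure of $Y$ is discarded at precisely the moment it is needed. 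Second, even granting some splitting, the inference from "the $G_{j}$-ergodic decomposition of $(\mathrm{Sub}(G),\Theta)$ collapses to the trivial one" to "$\Theta$ is supported on subgroups normalized by $G_{j}$" is a non sequitur: ergodicity of the conjugation action is essentially the opposite of the measure being carried by conjugation-fixed points. Third, property $(T)$ is invoked but never does any work in your argument, whereas in the actual proof it is the ingredient that converts co-amenability of stabilizers into atomicity. A smaller but real issue: your reduction "by ergodic decomposition I may assume the action is ergodic" can destroy the nonatomicity hypothesis (a nonatomic space can decompose into a continuum of finite orbits), so the atomic ergodic components must be handled separately.

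For comparison, the paper itself only sketches the argument (deferring to \cite{CP14} and \cite{Cre13}), and its route is different from yours: one shows that a non-essentially-free action is weakly amenable by invoking Zimmer's construction of a $\Gamma$-space $A$ with the same stabilizers as $X$ but not isomorphic to it, sandwiching it as $B\times X\to A\to X$, applying the intermediate contractive factor theorem (Theorem \ref{T:contractiveIFT} together with Corollary \ref{C:isoprod}) to force $A\cong C\times X$ with $C$ a boundary quotient, and using essential freeness of nontrivial boundary actions to rule out nontrivial stabilizers; property $(T)$ then forces a weakly amenable measure-preserving action to be atomic. Your IRS reformulation (inducing to $G$ and studying $\Theta$ on $\mathrm{Sub}(G)$) is the Stuck--Zimmer/invariant-random-subgroup point of view and is in principle a viable alternative framing, but as written the proposal is an outline whose decisive middle is either missing or incorrect, so it does not constitute a proof.
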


The proof strategy is to show that any action which is not essentially free is weakly amenable (meaning that the stabilizer subgroups of almost every point are co-amenable in $\Gamma$) which, combined with the (partial) property $(T)$ like behavior of $\Gamma$ forces the action to be atomic.  The key idea is to study the relatively contractive map $B \times X \to X$ where $B$ is a Poisson boundary of $G$ (the map is relatively contractive since $\Gamma$ is a lattice and $B$ is a stationary space).

The intermediate factor theorem guarantees that any $\Gamma$-space $A$ appearing in a chain $B \times X \to A \to X$ combined with the results on joinings with contractive spaces has the property that $A$ is isomorphic to $C \times X$ where $C$ is a quotient of $B$.  Various results of Zimmer (see \cite{Cre13} and \cite{CP14} for details) state that if $\Gamma \actson (X,\nu)$ is not weakly amenable then there exist spaces $A$ not isomorphic to $X$ but sharing the same stabilizer subgroups.  If the stabilizers are not trivial (i.e.~the action is not essentially free), then the factor theorem leads to the conclusion that nontrivial subgroups of $\Gamma$ stabilize points in $C$.  However, it is an easy consequence of the construction of the Poisson boundary that the action on $C$ is always essentially free if $C$ is nontrivial.

The crucial fact in the above strategy is that one obtains a large amount of structural information about such spaces $A$ from the fact that the map is relatively contractive, in particular, enough information to rule out nontrivial intermediate spaces.

We remark that the above theorem implies the Margulis Normal Subgroup Theorem in a direct way: if $N \normal \Gamma$ is nontrivial then the Bernouli action of $\Gamma / N$, treated as a $\Gamma$-space, has stabilizer subgroups precisely equal to $N$ and so the theorem states that in such a case, $N$ must be finite index (the Bernoulli shift must be atomic).

In closing, we also mention that the notion of relatively contractive maps has been extended to the noncommutative setting of operator algebras by the author and J.~Peterson \cite{CPnon}, leading to a sweeping generalization of the normal subgroup theorem:
\begin{theorem}[\cite{CPnon}]\label{T:app2}
Let $G$ be a semisimple group with trivial center and no compact factors with at least one factor being a connected (real) Lie group with property $(T)$ and let $\Gamma < G$ be an irreducible lattice.  Let $\pi : \Gamma \to \mathcal{U}(M)$ be a representation into the unitary group of a finite factor $M$ such that $\pi(\Gamma)^{\prime\prime} = M$.  Then either $M$ is finite-dimensional or $\pi$ extends to an isomorphism of the group von Neumann algebra $L\Gamma \simeq M$.
\end{theorem}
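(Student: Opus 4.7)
The plan is to reduce the theorem to a character rigidity statement for $\Gamma$: every normalized positive-definite conjugation-invariant function $\chi : \Gamma \to \mathbb{C}$ (i.e.\ every character of $\Gamma$) is either the regular character $\delta_e$ or arises from a finite-dimensional representation of a quotient of $\Gamma$. Assuming such a reduction, the dichotomy follows from the GNS construction applied to $\chi$. Concretely, set $\chi(\gamma) = \tau_M(\pi(\gamma))$, where $\tau_M$ is the unique faithful normal trace on the finite factor $M$. A standard computation shows $\chi$ is a character. If $\chi = \delta_e$, then the linear extension of $\gamma \mapsto \pi(\gamma)$ is trace-preserving on $\mathbb{C}\Gamma$, hence extends to a faithful normal $*$-homomorphism $L\Gamma \to M$; since $\pi(\Gamma)^{\prime\prime} = M$ this map has ultraweakly dense image and is therefore a $*$-isomorphism. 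Thus it suffices to prove: if $\chi \neq \delta_e$, then $M$ is finite-dimensional.

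To establish this, I would invoke the noncommutative relatively contractive framework of \cite{CPnon}. Let $(B,\beta)$ be the Poisson boundary of $(G,\mu)$ for an admissible symmetric $\mu$; by Theorem \ref{T:contractivelattice}, $(B,\beta)$ is a contractive $\Gamma$-space. Let $\Gamma$ act on $M$ by conjugation via $\pi$, which preserves $\tau_M$ and makes $M$ into a noncommutative trace-preserving $\Gamma$-space. The noncommutative analogue of Example \ref{E:examplecont} then gives that the inclusion $M \hookrightarrow L^{\infty}(B,\beta) \bar\otimes M$, with the diagonal $\Gamma$-action, is a relatively contractive noncommutative extension (in the sense of \cite{CPnon}).

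The heart of the proof is then the noncommutative intermediate factor theorem: every $\Gamma$-invariant intermediate von Neumann subalgebra $M \subseteq N \subseteq L^{\infty}(B,\beta) \bar\otimes M$ is of product form $A \bar\otimes M$ for some $\Gamma$-invariant $A \subseteq L^{\infty}(B,\beta)$. This is the noncommutative analogue of Theorems \ref{T:contractiveIFT} and \ref{T:IFT}, and crucially uses the relatively contractive property of the projection together with the uniqueness theorem for relatively contractive maps (Theorem \ref{T:relcontractiveunique}), both transferred to the von Neumann algebra category. Combined with Zimmer-type arguments analyzing the stabilizer structure of the character $\chi$ and exploiting the property $(T)$ hypothesis on at least one factor of $G$ (which rules out weakly amenable behavior and forces atomicity-like conclusions), this rigidity statement implies that a character $\chi \neq \delta_e$ must factor through a finite quotient of $\Gamma$. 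In that case $\pi$ is almost everywhere trivial on a finite-index subgroup, so $M = \pi(\Gamma)^{\prime\prime}$ is finite-dimensional.

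The principal obstacle is developing the noncommutative parallels of the commutative machinery developed in the earlier sections: disintegration measures $D_\pi(y)$ must be replaced by $\tau$-preserving conditional expectations onto fibers, the contractive isometric characterization of Proposition \ref{P:contractiveexte} must be reformulated as an isometric embedding of normal state spaces, and the point-realization arguments underlying Theorem \ref{T:contractiveexttopo} are unavailable and must be replaced by operator-algebraic arguments using ultraweak approximations. The single most delicate step is the noncommutative intermediate factor theorem, where one loses access to the Bader--Shalom boundary-decomposition shortcut used in Theorem \ref{T:IFT} and must instead directly analyze $\Gamma$-invariant subalgebras via relative commutants and Takesaki-type duality. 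Once that machinery is in place, the reduction described above proceeds along the same skeleton as the proof of Theorem \ref{T:app1}, with essential freeness replaced by triviality of the character $\chi$.
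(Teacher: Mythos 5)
The paper itself does not prove Theorem \ref{T:app2}: it is stated with a citation to \cite{CPnon} and the explicit remark that the proofs in this final section are beyond the scope of the exposition, so there is no in-paper argument to compare yours against. That said, your skeleton is faithful to the strategy of \cite{CPnon}: the reduction to character rigidity via $\chi(\gamma) = \tau_M(\pi(\gamma))$, the dichotomy (if $\chi = \delta_e$ the trace-preserving extension gives $L\Gamma \simeq M$; otherwise one must show the GNS representation --- which is just $M$ acting on $L^2(M,\tau_M)$, since $\pi(\Gamma)^{\prime\prime} = M$ --- is finite-dimensional), and the use of the conjugation action of $\Gamma$ on $(M,\tau_M)$ together with a Poisson-boundary, relatively-contractive analysis in the von Neumann algebra category, are all the right ingredients, and they are consistent with the paper's one substantive remark that the key input is the uniqueness/factor-theorem machinery for relatively contractive maps.

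As a proof, however, your proposal is a roadmap rather than an argument. The two steps that carry all the weight --- the noncommutative intermediate factor theorem for $\Gamma$-invariant subalgebras $M \subseteq N \subseteq L^{\infty}(B,\beta)\,\bar\otimes\, M$, and the subsequent analysis (using property $(T)$ on one factor and the amenability-type half of the argument) forcing a non-regular character to have finite-dimensional GNS representation --- are named but not established, and they constitute essentially the entire content of \cite{CPnon}; you correctly list the obstacles (replacing disintegrations by $\tau$-preserving conditional expectations, the loss of point realizations) but do not overcome them. Two smaller corrections to the reduction itself: (i) ``factors through a finite quotient'' is stronger than what character rigidity delivers directly, namely finite-dimensionality of the GNS representation; passing from a finite-dimensional unitary representation to finite image needs an extra appeal to superrigidity or the normal subgroup theorem, and for the purposes of Theorem \ref{T:app2} this detour is unnecessary since finite-dimensionality of the GNS space already gives finite-dimensionality of $M$; (ii) ``$\pi$ is almost everywhere trivial on a finite-index subgroup'' should read ``$\pi$ is trivial on a finite-index subgroup'' --- there is no measure in play, and the correct deduction is that $\tau_M(\pi(\gamma)) = 1$ for a unitary $\pi(\gamma)$ and a faithful trace $\tau_M$ forces $\pi(\gamma) = 1$.
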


This result gives a form of operator-algebraic superrigidity in the sense that such a lattice $\Gamma$ cannot be ``separated" from its group von Neumann algebra in the same way that the Margulis-Zimmer superrigidty theorem states that it cannot be separated from $G$: if $\varphi : \Gamma \to H$ is a homomorphism into an algebraic group with $\overline{\varphi(\Gamma)}$ noncompact then $\varphi$ extends to an isomorphism of $G$.

The result on operator algebraic superrigidity should be contrasted with the case of amenable groups: if $\Gamma$ and $\Lambda$ are amenable countable groups then $L\Gamma$ is always isomorphic to $L\Lambda$.  In this sense, lattices in semisimple groups are as far from amenable as possible and the superrigidity theorem is a major indication of this.

\dbibliography{references}

\end{document}